\newcommand{\tops}{\texorpdfstring}
\newcommand{\R}{\mathbb{R}}
\newcommand{\N}{\mathbb{N}}
\newcommand{\mc}[1]{\mathcal{#1}}
\newcommand{\ur}[1]{\mathrm{#1}}
\newcommand{\ure}{\ur{e}}
  \renewcommand{\labelenumi}{(\roman{enumi})}
\newcommand{\eps}{\varepsilon}
\newcommand{\gt}{>}
\newcommand{\lt}{<}
\DeclareMathOperator{\tr}{tr}
\newcommand{\defs}{\coloneqq}
\newcommand{\sfed}{\eqqcolon}
\newcommand{\ra}{\rightarrow}
\newcommand{\nea}{\nearrow}
\newcommand{\rh}{\rightharpoonup}
\newcommand{\ol}{\overline}
\newcommand{\ds}{\,\mathrm{d}s}
\newcommand{\ddt}{\frac{\mathrm{d}}{\mathrm{d}t}}
\newcommand{\complete}[2]{\ensuremath\ol{#1}^{\|\cdot\|_{#2}}}
\newcommand{\embed}{\hookrightarrow}
\newcommand{\hp}{\hphantom}
\newcommand{\pe}{\mathrel{\hp{=}}}
\newcommand{\tmax}{T_{\max}}
\newcommand{\intom}{\int_\Omega}
\newcommand{\Ombar}{\ol \Omega}
\newcommand{\leb}[1]{\ensuremath{L^{#1}(\Omega)}}
\newcommand{\sob}[2]{\ensuremath{W^{#1, #2}(\Omega)}}
\newcommand{\sobn}[2]{\ensuremath{W_N^{#1, #2}(\Omega)}}
\newcommand{\con}[1]{\ensuremath{C^{#1}(\Ombar)}}
\newcommand{\ustar}{u_\star}
\newcommand{\vstar}{v_\star}
\newcommand{\cp}{C_{\mathrm P}}
\renewenvironment{proof}[1][\proofname]{\par
  \pushQED{\qed}%
  \normalfont \topsep0\p@\relax
  \trivlist
  \item[\hskip\labelsep\scshape
  #1\@addpunct{.}]\ignorespaces
}{%
  \popQED\endtrivlist\@endpefalse
}
\newtheorem{base}{Base}[section]
\numberwithin{equation}{section}
\newtheorem{theorem}[base]{Theorem} \newtheorem*{theorem*}{Theorem}
\newtheorem{lemma}[base]{Lemma} \newtheorem*{lemma*}{Lemma}
 \newtheorem*{prop*}{Proposition}
 \newtheorem*{cor*}{Corollary}
\theoremstyle{definition}
\newtheorem{remark}[base]{Remark} \newtheorem*{remark*}{Remark}
\theoremstyle{definition}
 \newtheorem*{definition*}{Definition}
 \newtheorem*{example*}{Example}
 \newtheorem*{cond*}{Condition}
\begin{document}
\setkomafont{title}{\normalfont\Large}
\title{Global solutions near homogeneous steady states in a multi-dimensional population model with both predator- and prey-taxis}
\author{%
Mario Fuest\footnote{mail: \href{mailto:fuestm@math.uni-paderborn.de}{fuestm@math.uni-paderborn.de}, ORCID: \href{https://orcid.org/0000-0002-8471-4451}{0000-0002-8471-4451}}\\
{\small Institut f\"ur Mathematik, Universit\"at Paderborn,}\\
{\small 33098 Paderborn, Germany}
}
\date{}

\maketitle

\KOMAoptions{abstract=true}
\begin{abstract}
\noindent
We study the system
\begin{align*}\label{prob:star} \tag{$\star$}
  \begin{cases}
    u_t = D_1 \Delta u - \chi_1 \nabla \cdot (u \nabla v) + u(\lambda_1 - \mu_1 u + a_1 v) \\
    v_t = D_2 \Delta v + \chi_2 \nabla \cdot (v \nabla u) + v(\lambda_2 - \mu_2 v - a_2 u)
  \end{cases}
\end{align*}
(inter alia) for $D_1, D_2, \chi_1, \chi_2, \lambda_1, \lambda_2, \mu_1, \mu_2, a_1, a_2 > 0$
in smooth, bounded domains $\Omega \subset \mathbb R^n$, $n \in \{1, 2, 3\}$.\\[0.5pt]
Without any further restrictions on these parameters,
we prove that there exists a constant stable steady state $(u_\star, v_\star) \in [0, \infty)^2$,
meaning that there is $\varepsilon > 0$ such that,
if $u_0, v_0 \in W^{2, 2}(\Omega)$ are nonnegative with $\partial_\nu u_0 = \partial_\nu v_0 = 0$ in the sense of traces and
\begin{align*}
  \|u_0 - u_\star\|_{W^{2,2}(\Omega)} + \|v_0 - v_\star\|_{W^{2,2}(\Omega)} < \varepsilon,
\end{align*}
then there exists a global classical solution $(u, v)$ of~\eqref{prob:star} with initial data $u_0, v_0$
converging to $(u_\star, v_\star)$ in~$W^{2, 2}(\Omega)$.
Moreover, the convergence rate is exponential,
except for the case $\lambda_2 \mu_1 = \lambda_1 a_2$, where it is is only algebraical.\\[0.5pt]
To the best of our knowledge,
this constitutes the first global existence result for \eqref{prob:star} in the biologically most relevant two- and three-dimensional settings.
In the proof, we make use of the special structure in \eqref{prob:star} and carefully balance the doubly cross-diffusive interaction therein.
Indeed, we introduce certain functionals and combine them in a way allowing for cancellations of the most worrisome terms.\\[0.5pt]
 \textbf{Key words:} {double cross diffusion; large-time behavior; predator--prey; stability}\\
 \textbf{AMS Classification (2020):} {35B35 (primary); 35K55, 35K57, 92D25 (secondary)} \end{abstract} 
\section{Introduction}
Migration-influenced predator–prey interaction can mathematically be described by the system
\begin{align}\label{prob:general}
  \begin{cases}
    u_t = D_1 \Delta u + \nabla \cdot (\rho_1(u, v) \nabla v) + f(u, v), \\
    v_t = D_2 \Delta v + \nabla \cdot (\rho_2(u, v) \nabla u) + g(u, v).
  \end{cases}
\end{align}
Therein, $u$ and $v$ model the density of predators and prey, respectively.
Apart from growth, death or intra-species competition, the functions $f$ and $g$ model predation:
Encounters are beneficial for the predators and harmful to the prey.
Moreover, the species are not only assumed to move around randomly (terms $D_1 \Delta u$ and $D_2 \Delta v$),
but also to be able to direct their movement toward (attractive taxis, negative $\rho_i$)
or away from (repulsive taxis, positive $\rho_i$) higher concentration of the other species.

The relevance of attractive prey-taxis (`predators move towards their prey', negative $\rho_1$)
has first been biologically verified in~\cite{KareivaOdellSwarmsPredatorsExhibit1987}.
It has been observed that such an effect may actually reduce effective biocontrol,
contradicting intuitive assumptions~\cite{LeeEtAlContinuousTravelingWaves2008}.
Moreover, the presence of (sufficiently strong) prey-taxis may actually lead to a lack of
pattern formation~\cite{LeeEtAlPatternFormationPreytaxis2009}.

Among systems of the form~\eqref{prob:general},
those with only attractive prey- but no predator-taxis ($\rho_1 \lt 0$ and $\rho_2 \equiv 0$),
have been studied most extensively---%
perhaps because they resemble attractive chemotaxis systems from a mathematical point of view,
which in turn have been studied in comparatively great detail; see for instance the survey~%
\cite{BellomoEtAlMathematicalTheoryKeller2015}.

For $\rho_1(u, v) = -\chi u$ and several $f, g$, namely,
the existence of globally bounded classical solutions to \eqref{prob:general}
has been proved in \cite{WuEtAlGlobalExistenceSolutions2016},
provided $\chi \gt 0$ is sufficiently small.
In two space dimensions, the smallness condition on $\chi$ is, again for various choices of $f$ and $g$, not necessary~%
\cite{JinWangGlobalStabilityPreytaxis2017, XiangGlobalDynamicsDiffusive2018},
while in the three-dimensional setting, one may overcome this restriction
by either assuming the prey-taxis to be saturated at larger predator quantities~%
\cite{HeZhengGlobalBoundednessSolutions2015, TaoGlobalExistenceClassical2010}
or by considering weak solutions instead~\cite{WinklerAsymptoticHomogenizationThreedimensional2017}.

Moreover, a repulsive predator-taxis mechanism (`prey moves away from their predators, positive $\rho_2$) 
has, for instance, been detected for crayfish seeking shelter~%
\cite{GarveyEtAlAssessingHowFish1994, HillLodgeReplacementResidentCrayfishes1999, LeeEtAlContinuousTravelingWaves2008}.

While less extensively studied than those with prey-taxis, such systems have been mathematically examined as well:
Now without any smallness assumptions on $\chi$,
globally bounded classical solutions to~\eqref{prob:general} have been constructed 
for $\rho_1 \equiv 0$, $\rho_2(u, v) = \chi v$ and certain $f, g$ 
in~\cite{WuEtAlDynamicsPatternFormation2018}.
The same article also considered pattern formation and shows that a strong taxis mechanism (large $\chi$)
leads to the absence of stable nonconstant steady states.

Combining both these effects $(\rho_1 \lt 0$, $\rho_2 \gt 0$)
leads to the study of so-called pursuit--evasion models
which have been proposed in~\cite{TsyganovEtAlQuasisolitonInteractionPursuitEvasion2003}
(see also~\cite{GoudonUrrutiaAnalysisKineticMacroscopic2016, TyutyunovEtAlMinimalModelPursuitEvasion2007}
for the modelling of related systems featuring different taxis mechanisms).
There, propagating waves differing from those in taxis-free predator--prey systems have been detected numerically.

\paragraph{Main results.}
In the present article, we handle a system including both predator- and prey-taxis
and take the prototypical choices
$\rho_1(u, v) = -\chi_1 u$, $\rho_2(u, v) = \chi_2 v$,
$f(u, v) = u (\lambda_1 - \mu_1 u + a_1 v)$ and $g(u, v) = v (\lambda_2 - \mu_2 v - a_1 u)$ for $u, v \ge 0$ in \eqref{prob:general}.
That is, we consider
\begin{align}\label{prob}
  \begin{cases}
    u_t = D_1 \Delta u - \chi_1 \nabla \cdot (u \nabla v) + u(\lambda_1 - \mu_1 u + a_1 v), & \text{in $\Omega \times (0, \infty)$}, \\
    v_t = D_2 \Delta v + \chi_2 \nabla \cdot (v \nabla u) + v(\lambda_2 - \mu_2 v - a_2 u), & \text{in $\Omega \times (0, \infty)$}, \\
    \partial_\nu u = \partial_\nu v = 0,                                                    & \text{on $\partial \Omega \times (0, \infty)$}, \\
    u(\cdot, 0) = u_0, v(\cdot, 0) = v_0,                                                   & \text{in $\Omega$}
  \end{cases}
\end{align}
in smooth, bounded domains $\Omega$
for $D_1, D_2, \chi_1, \chi_2 \gt 0$ and $\lambda_1, \lambda_2, \mu_1, \mu_2, a_1, a_2 \ge 0$.

From a mathematical point of view, such systems are much more challenging than those containing a taxis term in `only' one equation,
which are in turn already highly non trivial.
For instance, if $\chi_2 = 0$ then the $L^\infty$-$L^1$ bound for the first equation
obtained by integrating a suitable linear combination of the first two equations in \eqref{prob}
can be used to obtain certain a priori estimates even for the gradient of the second equation by straightforward semigroup arguments.
However, for \eqref{prob}, bounds for one of the first two equations therein generally do not `automatically' imply bounds for the other one.
As another example, suppose that one could derive $L^\infty$ estimates for both solution components
(ignoring for a moment the fact that these are definitely not easy to obtain): How does one then proceed to obtain, say, Hölder bounds?
At least, classical results for scalar parabolic equations are not applicable.

Therefore, it is not too surprising that the analysis of the system \eqref{prob} with $\chi_1 \gt 0$ and $\chi_2 \gt 0$
is much less developed than for the cases $\chi_1 = 0$ or $\chi_2 = 0$.
To the best of our knowledge, global solutions to \eqref{prob} (with $\chi_1, \chi_2 \gt 0$)
have only been obtained in 1D and only in the weak sense~%
\cite{TaoWinklerExistenceTheoryQualitative2020, TaoWinklerFullyCrossdiffusiveTwocomponent2020}---%
which in turn further indicates the difficulty of the problem \eqref{prob}.

In order to overcome the obstacles outlined above, we thus need to substantially make use of the special structure in \eqref{prob}.
To that end, we carefully design certain functionals in such a way that, in calculating their derivatives, favourable cancellations occur.
We will introduce them in a moment, but before we would like to state our main result.
Making a first step towards extending the knowledge about such systems also in the higher dimensional setting,
we analyze the stability of homogeneous steady states for~\eqref{prob:general} and obtain
\begin{theorem}\label{th:sss}
  Suppose $\Omega \subset \R^n$, $n \in \{1, 2, 3\}$, is a smooth, bounded domain,
  and let
  \begin{align}\label{eq:sss:cond_d_chi}
    D_1, D_2, \chi_1, \chi_2 \gt 0
    \quad \text{and} \quad
    m_1, m_2 \ge 0
  \end{align}

  Suppose either
  \begin{align}\label{eq:sss_h1} \tag{H1}
    \lambda_1 = \lambda_2 = \mu_1 = \mu_2 = a_1 = a_2 = 0    
  \end{align}
  or
  \begin{align}\label{eq:sss_h2} \tag{H2}
    \lambda_1, \lambda_2 \ge 0
    \quad \text{and} \quad
    a_1, a_2, \mu_1, \mu_2 \gt 0.
  \end{align}

  Then there exist $\eps \gt 0$ and $K_1, K_2 \gt 0$ with the following properties:
  For any
  \begin{align}\label{eq:sss:cond_init}
        u_0, v_0
    \in \sobn22 
    \quad \text{
      being nonnegative and,
      if \eqref{eq:sss_h1} holds, with $\textstyle \intom u_0 = m_1$ and $\textstyle \intom v_0 = m_2$},
  \end{align}
  where
  \begin{align}\label{eq:sss:def_sobn22}
          \sobn22 
    \defs \{\,\varphi \in \sob22 \colon \partial_\nu \varphi = 0 \text{ in the sense of traces}\,\},
  \end{align}
  and fulfilling
  \begin{align} \label{eq:sss:cond_eps}
    \|u_0 - \ustar\|_{\sob22} + \|v_0 - \vstar\|_{\sob22} \lt \eps,
  \end{align}
  where
  \begin{align}\label{eq:sss:def_ustar}
    (\ustar, \vstar) \defs 
    \begin{cases}
      \left(\frac{m_1}{|\Omega|}, \frac{m_2}{|\Omega|}\right),
        & \text{if } \eqref{eq:sss_h1} \text{ holds}, \\[1em]
      \left(
        \frac{\lambda_1 \mu_2 + \lambda_2 a_1}{\mu_1 \mu_2 + a_1 a_2},
        \frac{\lambda_2 \mu_1 - \lambda_1 a_2}{\mu_1 \mu_2 + a_1 a_2}
      \right), & \text{if \eqref{eq:sss_h2} holds and } \lambda_2 \mu_1 \gt \lambda_1 a_2, \\[1em]
      \left( \frac{\lambda_1}{\mu_1}, 0 \right),
        & \text{if \eqref{eq:sss_h2} holds and } \lambda_2 \mu_1 \le \lambda_1 a_2,
    \end{cases}
  \end{align}
  there exist a unique pair
  \begin{align*}
    (u, v) \in \left( C^0([0, \infty); \sobn22) \cap C^\infty(\Ombar \times (0, \infty)) \right)^2
  \end{align*}
  solving~\eqref{prob} classically. Moreover, each solution component is nonnegative and $(u, v)$ converges to $(\ustar, \vstar)$ in the sense that
  \begin{align}\label{eq:sss:conv_u_v}
    \|u(\cdot, t) - \ustar\|_{\sob22} + \|v(\cdot, t) - \vstar\|_{\sob22} &\le
    \begin{cases}
      (\frac1{K_1 \eps} + K_2 t)^{-1},  & \text{if \eqref{eq:sss_h2} holds and $\lambda_2 \mu_1 = \lambda_1 a_2$}, \\[1em]
      K_1 \eps \ure^{-K_2 t},           & \text{else}
    \end{cases}
  \end{align}
  for all $t \gt 0$.
\end{theorem}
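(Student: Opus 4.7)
The plan is to combine standard local well-posedness in $\sobn22$ with a Lyapunov-type functional whose construction exploits a specific algebraic cancellation arising from the doubly cross-diffusive structure of~\eqref{prob}. In a first step, I would invoke quasilinear parabolic theory (for instance Amann's framework, or a fixed-point argument based on maximal $\ur{L}^p$-regularity) to construct a unique maximal nonnegative classical solution $(u, v)$ on $[0, \tmax)$ with $(u, v) \in C^0([0, \tmax); \sobn22)^2 \cap C^\infty(\Ombar \times (0, \tmax))^2$, together with the continuation criterion that $\tmax \lt \infty$ forces $\|u(\cdot, t)\|_{\sob22} + \|v(\cdot, t)\|_{\sob22} \ra \infty$ as $t \nearrow \tmax$.

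Writing $U \defs u - \ustar$ and $V \defs v - \vstar$, the system rewrites as
\begin{align*}
  U_t &= D_1 \Delta U - \chi_1 \ustar \Delta V - \chi_1 \nabla \cdot (U \nabla V) - \mu_1 \ustar U + a_1 \ustar V + Q_1(U, V), \\
  V_t &= D_2 \Delta V + \chi_2 \vstar \Delta U + \chi_2 \nabla \cdot (V \nabla U) - \mu_2 \vstar V - a_2 \vstar U + Q_2(U, V),
\end{align*}
where $Q_1, Q_2$ collect the quadratic remainders. The key observation is that testing the first equation against $\chi_2 \vstar U$ and the second against $\chi_1 \ustar V$ produces linear cross-diffusion contributions $\chi_1 \chi_2 \ustar \vstar \intom \nabla V \cdot \nabla U$ and $-\chi_1 \chi_2 \ustar \vstar \intom \nabla U \cdot \nabla V$, which cancel exactly. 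Consequently the basic $\leb2$-energy $E_0(t) \defs \chi_2 \vstar \intom U^2 + \chi_1 \ustar \intom V^2$ dissipates without any smallness requirement on $\chi_1, \chi_2$: the diffusive contributions $-D_1 \chi_2 \vstar \|\nabla U\|_{\leb2}^2 - D_2 \chi_1 \ustar \|\nabla V\|_{\leb2}^2$ remain, and together with the linear kinetic part (controlled by stability of the ODE linearisation, or in case~\eqref{eq:sss_h1} by the mass-conservation identities $\intom U \equiv \intom V \equiv 0$ that enable Poincaré) they dominate the quadratic remainders, which are harmless thanks to the Sobolev embedding $\sob22 \embed L^\infty$ (available since $n \le 3$) and the hypothesised smallness of $\|U\|_{\sob22} + \|V\|_{\sob22}$.

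To upgrade from $\leb2$- to $\sob22$-convergence, I would iterate the same weighting principle at the $\|\nabla U\|_{\leb2}^2 + \|\nabla V\|_{\leb2}^2$- and $\|\Delta U\|_{\leb2}^2 + \|\Delta V\|_{\leb2}^2$-levels, testing against $-\Delta U$, $-\Delta V$ (and their second-order analogues) with weights chosen analogously so that the leading-order linear cross-diffusion terms again annihilate pairwise. The arising trilinear perturbations such as $\intom U \nabla V \cdot \nabla \Delta U$ are bounded by Hölder and Gagliardo--Nirenberg interpolations, harvesting the $\sob22$-smallness multiplicatively. Adding the three levels with sufficiently small coupling constants then yields a composite functional $\mc F$ satisfying $\ddt \mc F \le -\kappa \mc F$ whenever $\mc F$ lies below a fixed threshold; a continuity/bootstrap argument coupled with the continuation criterion produces $\tmax = \infty$ and the claimed exponential rate in~\eqref{eq:sss:conv_u_v}.

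The main obstacle lies in the critical case of~\eqref{eq:sss_h2} with $\lambda_2 \mu_1 = \lambda_1 a_2$, where $\vstar = 0$: the weight $\chi_2 \vstar$ collapses, the $L^2$-cancellation degenerates, and the $V$-linearisation loses its linear damping (the relevant eigenvalue is zero). Here I would instead exploit the quadratic self-damping of the $v$-equation: since $\lambda_2 - a_2 \ustar = 0$ in this case, integrating yields $\ddt \intom v = -\mu_2 \intom v^2 - a_2 \intom U v$, and combining $(\intom v)^2 \le |\Omega| \intom v^2$ with a Bernoulli-type comparison argument produces the algebraic bound $\intom v(\cdot, t) \lesssim (1 + t)^{-1}$ once the $U$-perturbation (still exponentially decaying, since the $U$-linearisation retains a strictly negative eigenvalue) is absorbed. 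Propagating this algebraic rate through an adapted version of the functional hierarchy above yields the stated $\bigl(\frac{1}{K_1\eps} + K_2 t\bigr)^{-1}$-rate in $\sob22$. The delicate point throughout -- where the phrase \enquote{carefully balance the doubly cross-diffusive interaction} genuinely bites -- is to design the higher-order companions of $E_0$ so that precisely the same weight structure $(\chi_2 \vstar, \chi_1 \ustar)$ continues to deliver the cancellation at the $\nabla$- and $\Delta$-levels.
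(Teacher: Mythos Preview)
Your overall strategy matches the paper's, but there is a genuine gap in the case \eqref{eq:sss_h2} with $\lambda_2 \mu_1 > \lambda_1 a_2$. The weights $(\chi_2 \vstar, \chi_1 \ustar)$ do cancel the \emph{cross-diffusion} contributions at the $\leb2$ level, but they leave behind the kinetic cross-term
\[
\ustar \vstar\,(\chi_2 a_1 - \chi_1 a_2) \intom UV,
\]
and this cannot in general be absorbed by the remaining diagonal damping $-\ustar\vstar\bigl(\chi_2\mu_1\|U\|_{\leb2}^2 + \chi_1\mu_2\|V\|_{\leb2}^2\bigr)$: the discriminant condition $(\chi_2 a_1 - \chi_1 a_2)^2 \le 4\chi_1\chi_2\mu_1\mu_2$ fails for generic parameters. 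Your phrase ``controlled by stability of the ODE linearisation'' is misleading here --- stability of the Jacobian guarantees \emph{some} Lyapunov quadratic form, but not the specific one $\chi_2\vstar\,U^2 + \chi_1\ustar\,V^2$ that the cross-diffusion cancellation forces on you. Nor can the gradient dissipation rescue this, since under \eqref{eq:sss_h2} the spatial means of $U, V$ do not vanish and Poincar\'e only controls oscillation.

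The paper resolves this by taking \emph{different} weights at each of the three levels: $(a_2\vstar, a_1\ustar)$ at the $\leb2$ level, $\bigl((a_2+\chi_2)\vstar, (a_1+\chi_1)\ustar\bigr)$ at the gradient level, and $(\chi_2\vstar, \chi_1\ustar)$ at the Laplacian level. The mechanism is that the residual cross-diffusion term $\intom \nabla U\cdot\nabla V$ left over from the $\leb2$ level is cancelled not against its own companion but against the \emph{kinetic} cross-term arising at the gradient level, and likewise $\intom \Delta U\,\Delta V$ cascades between the gradient and Laplacian levels. This telescoping across levels is the content of Lemma~\ref{lm:conv_h2_pos_pos} and is precisely what the introduction flags as ``quite miraculous''. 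Your final sentence --- that ``precisely the same weight structure $(\chi_2\vstar, \chi_1\ustar)$ continues to deliver the cancellation'' --- is therefore exactly where the argument breaks. A secondary omission: $\vstar = 0$ whenever $\lambda_2\mu_1 \le \lambda_1 a_2$, not only in the equality case, so the weight degeneracy (and the need for an alternative argument, which in the paper replaces cancellation by Young's inequality with a large second weight) already arises in the strict-inequality regime as well.
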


\begin{remark}
  Let us give some heuristic arguments
  why we believe that the rates in~\eqref{eq:sss:conv_u_v} are, 
  up to the values of $K_1$ and $K_2$ therein,
  optimal.

  For the heat equation, convergence is exponentially fast (take for instance an eigenfunction as initial datum)
  and adding taxis terms (but no terms of zeroth order) should not dramatically speed up the convergence.
  Moreover, in the around $(\ustar, \vstar)$ linearized ODE system,
  $(\ustar, \vstar)$ is a stable fixed point, provided \eqref{eq:sss_h2} with $\lambda_2 \mu_1 \neq \lambda_1 a_2$ holds.
  Hence, also here, `only' an exponential convergence rate can be expected.

  The case~\eqref{eq:sss_h2} with $\lambda_2 \mu_1 = \lambda_1 a_2$ is different.
  As $u$ converges to $\frac{\lambda_1}{\mu_1}$,
  one might expect that $v$ behaves similarly as the solution $\tilde v$ to
  \begin{align*}
      \tilde v'
    = \tilde v \left(\lambda_2 - \mu_2 \tilde v - a_2 \cdot \frac{\lambda_1}{\mu_1} \right)
    = - \mu_2 (\tilde v)^2,
  \end{align*}
  which is given by
  \begin{align*}
    \tilde v(t) = \frac1{\frac1{\tilde v(0)} + \mu_1 t}, \qquad t \ge 0.
  \end{align*}
\end{remark}

\paragraph{Main ideas.}
After obtaining local-in-time solutions by Amann's theory in Lemma~\ref{lm:local_ex},
we will focus our analysis on estimates holding in $\Ombar \times [0, T_\eta)$ for $\eta \gt 0$ to be fixed later,
where $T_\eta \in [0, \infty]$ is the maximal time up to which 
$\|u - \ustar\|_{\leb\infty} + \|v - \vstar\|_{\leb\infty} \lt \eta$.

In the case of~\eqref{eq:sss_h1}, that is, without any cell proliferation, one formally computes
\begin{align*}
  \frac12 \ddt \intom (u - \ustar)^2 + D_1 \intom |\nabla u|^2 = \chi_1 \intom u \nabla u \cdot \nabla v
  \qquad \text{in $(0, \tmax)$}.
\end{align*}
The key idea is that one can rewrite the problematic term on the right hand side as
\begin{align*}
    \chi_1 \intom u \nabla u \cdot \nabla v
  = \chi_1 \intom (u - \ustar) \nabla u \cdot \nabla v  + \chi_1 \ustar \intom \nabla u \cdot \nabla v
  \qquad \text{in $(0, \tmax)$}.
\end{align*}
and note that, as the signs for the taxis terms in \eqref{prob} are opposite, two problematic terms cancel out in calculating
\begin{align*}
  &\pe \ddt \left( \frac{\chi_2 \vstar}{2} \intom (u - \ustar)^2 + \frac{\chi_1 \ustar}{2} \intom (v - \vstar)^2 \right)
       + \chi_2 D_1 \vstar \intom |\nabla u|^2 + \chi_1 D_2 \ustar \intom |\nabla v|^2 \\
  &=   \chi_1 \chi_2 \vstar \intom (u - \ustar) \nabla u \cdot \nabla v
       - \chi_1 \chi_2 \ustar \intom (v - \vstar) \nabla u \cdot \nabla v
  \qquad \text{in $(0, \tmax)$}.
\end{align*}
If $\eta \gt 0$ is chosen small enough, the remaining terms on the right hand side can be absorbed by the dissipative terms---%
at least in $(0, T_\eta)$.

Fortunately, for higher order terms, one can proceed similarly
and thus see that the sum of (norms equivalent to) the $\sob22$~norms of both solution components is decreasing,
which implies $T_\eta = \tmax$, provided $\eta \gt 0$ is small enough
and assuming $T_\eta \gt 0$,
which can be achieved by choosing $\eps \gt 0$ in Theorem~\ref{th:sss} sufficiently small.
Due to the blow-up criterion in Lemma~\ref{lm:local_ex},
one also sees that $\tmax = \infty$.
Convergence to the mean $(\ol u_0, \ol v_0)$ as well as the convergence rate are then merely corollaries of the estimates already gained.

For~\eqref{eq:sss_h2}, however, this idea alone is insufficient.
For instance, if $\ustar \gt 0$ and $\vstar \gt 0$, 
arguing similarly as above,
for any $A_1, A_2 \gt 0$ there is $\eta \gt 0$ such that
\begin{align}\label{eq:intro:ddt_u_2}
  &\pe  \ddt \left( \frac{A_1}{2} \intom (u - \ustar)^2 + \frac{A_2}{2} \intom (v - \vstar)^2 \right) \notag \\
  &\pe  + \frac{A_1 \mu_1}{2} \intom (u - \ustar)^2
        + \frac{A_2 \mu_2}{2} \intom (v - \vstar)^2
        + \frac{A_1 D_1}{2} \intom |\nabla u|^2
        + \frac{A_2 D_2}{2} \intom |\nabla u|^2 \notag \\
  &\le  (A_1 a_1 \ustar - A_2 a_2 \vstar) \intom (u - \ustar) (v - \vstar)
        + (A_1 \chi_1 \ustar - A_2 \chi_2 \vstar) \intom \nabla u \cdot \nabla v 
  \qquad \text{in $(0, T_\eta)$},
\end{align}
see Lemma~\ref{lm:ddt_u_2} and (the proof of) Lemma~\ref{lm:conv_h2_pos_pos}.

For the special case that $(a_1, a_2) = \gamma (\chi_1, \chi_2)$ for some $\gamma \ge 0$,
taking $A_1 \defs \chi_2 \vstar$ and $A_2 \defs  \chi_1 \ustar$
already implies that the right hand side in~\eqref{eq:intro:ddt_u_2} is zero.
Alternatively, if $D_1$ and $D_2$ are sufficiently large compared to $a_1, a_2, \chi_1, \chi_2, \ustar$ and $\vstar$,
the dissipative terms in~\eqref{eq:intro:ddt_u_2} can be used to absorb the terms on the right hand side.
In both these special cases, higher order terms can be handled similarly again so that we can conclude as above.

For arbitrary parameter values,
such shortcuts are apparently unavailable and hence we need to argue differently.
Actually, this is the reason for considering~\eqref{prob} with so many parameters:
We want to emphasize that our approach does not rely on certain relationships between them.

Quite miraculously, appropriately choosing positive linear combinations of the six functionals
\begin{align}\label{eq:intro:functionals}
  \ddt \intom (u - u_\star)^2, \quad
  \ddt \intom (v - v_\star)^2, \quad
  \ddt \intom |\nabla u|^2, \quad
  \ddt \intom |\nabla v|^2, \quad
  \ddt \intom |\Delta u|^2
  \quad \text{and} \quad
  \ddt \intom |\Delta v|^2
\end{align}
still allows for a cancellation of all problematic terms, see Lemma~\ref{lm:conv_h2_pos_pos}.

The remaining case, \eqref{eq:sss_h2} with $\lambda_2 \mu_1 \le \lambda_1 a_2$, is handled in Subsection~\ref{sec:h2_le}.
In a desire to keep the introduction at reasonable length,
we just note here that the proofs also rely on the functionals in~\eqref{eq:intro:functionals},
albeit in a somewhat different fashion as in the first case,
and refer for a more detailed discussion to (the beginning of) Subsection~\ref{sec:h2_le}.
Moreover, the in some sense degenerate case~\eqref{eq:sss_h2} with $\lambda_2 \mu_1 = \lambda_1 a_2$
deserves additional special treatment.
We introduce a new functional in Lemma~\ref{lm:ddt_u}
and discuss directly beforehand why that seems to be necessary.

As a last step, in Lemma~\ref{lm:main_proof} we bring all these estimates together
and prove global existence as well as convergence to $(\ustar, \vstar)$.
Moreover, in Section~\ref{sec:gen}, we discuss possible generalizations of Theorem~\ref{th:sss}.

Finally, in the appendix, we collect certain Gagliardo--Nirenberg-type inequalities used throughout the article.
They might potentially be of independent interest and differentiate themselves from more often seen inequalities in two ways:
Firstly, although we assume $\Omega$ to be bounded, we get rid of the additional additive term on the right hand side.
Secondly, instead of $\|D^2 \varphi\|_{\leb p}$ and $\|D^3 \varphi\|_{\leb p}$,
our version contains only $\|\Delta \varphi\|_{\leb p}$ and $\|\nabla \Delta \varphi\|_{\leb p}$
(for certain values of $p \in (1, \infty)$).

\section{Preliminaries}
\paragraph{Local existence.}
Apparently, trying to prove local existence of classical solutions to \eqref{prob}
by following proofs for systems with a taxis term in just one equation (corresponding to either $\chi_1 = 0$ or $\chi_2 = 0$)
and thus building on the concept of mild solutions and Banach's fixed point theorem
or on Schauder's fixed point theorem
(see for instance \cite{HorstmannWinklerBoundednessVsBlowup2005} or \cite{LankeitLocallyBoundedGlobal2017}, respectively)
is not fruitful---%
at least if we want to consider both arbitrary nonnegative parameters and large initial data.
%(Recall that \eqref{eq:sss:cond_eps} only requires smallness of $u_0 - \ustar$ and $v_0 - \vstar$, not of $u_0$ and $v_0$ themself.)
Therefore, we resort to the abstract existence theory by Amann instead.
\begin{lemma}\label{lm:local_ex}
  Suppose that $\Omega \subset \R^n$, $n \in \N$, is a smooth, bounded domain,
  and let $D_1, D_2, \chi_1, \chi_2 \gt 0$ as well as
  $\lambda_1, \lambda_2, \mu_1, \mu_2, a_1, a_2 \ge 0$.
  Moreover, let $p \gt n$ and $u_0, v_0 \in \sob1p$ be nonnegative.

  Then there exist $\tmax \in (0, \infty]$ and uniquely determined nonnegative
  \begin{align}\label{eq:local_ex:reg}
    u, v \in C^0([0, \tmax); \sob1p) \cap C^\infty(\Ombar \times (0, \tmax))
  \end{align}
  such that $(u, v)$ is a classical solution of~\eqref{prob} and, if $\tmax \lt \infty$, then
  \begin{align}\label{eq:local_ex:ex_crit}
    \limsup_{t \nea \tmax} \left( \|u(\cdot, t)\|_{C^\alpha(\Omega)} + \|v(\cdot, t)\|_{C^\alpha(\Omega)} \right) = \infty
    \qquad \text{for all $\alpha \in (0, 1)$}.
  \end{align}

  Moreover, this solution further satisfies
  \begin{align}\label{eq:local_ex:cont_sob22}
    u, v \in C^0([0, \tmax); \sobn22),
  \end{align}
  provided $u_0, v_0$ satisfy~\eqref{eq:sss:cond_init}.
\end{lemma}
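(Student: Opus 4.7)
The plan is to view \eqref{prob} as an abstract quasilinear parabolic system
\begin{align*}
  \partial_t \begin{pmatrix} u \\ v \end{pmatrix}
  = \nabla \cdot \left( A(u, v) \nabla \begin{pmatrix} u \\ v \end{pmatrix} \right) + F(u, v),
  \qquad
  A(u, v) \defs \begin{pmatrix} D_1 & -\chi_1 u \\ \chi_2 v & D_2 \end{pmatrix},
\end{align*}
with $F$ collecting the zeroth-order terms, and to invoke Amann's local existence theory for such systems. On $[0, \infty)^2$, the matrix $A(u, v)$ has trace $D_1 + D_2 \gt 0$ and determinant $D_1 D_2 + \chi_1 \chi_2 u v \gt 0$, so its eigenvalues have strictly positive real parts; this, together with the smoothness of $A$ and $F$ in their arguments, verifies the normal ellipticity and regularity hypotheses of Amann's theorem and produces a unique maximal solution fulfilling~\eqref{eq:local_ex:reg}, with compatibility of the homogeneous Neumann condition automatic for initial data merely in $\sob1p$, $p \gt n$. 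A standard parabolic bootstrap, exploiting the polynomial smoothness of all coefficients, then upgrades the solution to $C^\infty(\Ombar \times (0, \tmax))$, so that it solves~\eqref{prob} classically.

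Next I would verify nonnegativity by noting the quasi-positive structure of~\eqref{prob}: each reaction term $u(\lambda_1 - \mu_1 u + a_1 v)$ and $v(\lambda_2 - \mu_2 v - a_2 u)$ vanishes on its respective coordinate axis of the positive cone, and the taxis flux $u \nabla v$ (resp.\ $v \nabla u$) degenerates when $u$ (resp.\ $v$) does. Testing the $u$-equation against $u_- \defs \min\{u, 0\}$ on short subintervals of $(0, \tmax)$, on which $\nabla v$ and $\Delta v$ already belong to suitable Lebesgue spaces thanks to~\eqref{eq:local_ex:reg}, and using Gronwall's inequality yields $u \ge 0$; the analogous argument for $v$ then covers both components.

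Upgrading the natural $\sob1p$-blow-up alternative delivered by Amann to the Hölder criterion~\eqref{eq:local_ex:ex_crit} I would achieve by contraposition: assuming $\|u(\cdot, t)\|_{C^\alpha(\Ombar)} + \|v(\cdot, t)\|_{C^\alpha(\Ombar)}$ stays bounded on $[0, \tmax)$, the coefficients of each equation, viewed scalarly, are Hölder continuous uniformly in time, so that standard linear parabolic $L^p$- and Schauder theory applied to each equation separately (treating the cross-diffusive term as a drift with Hölder coefficients) provides a uniform $\sob1p$-bound, contradicting the blow-up alternative and hence forcing~\eqref{eq:local_ex:ex_crit}.

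The additional regularity~\eqref{eq:local_ex:cont_sob22} is obtained by rerunning Amann's construction in the phase space $\sobn22$---for which the compatibility condition $\partial_\nu u_0 = 0$ is built into the definition~\eqref{eq:sss:def_sobn22} and, for $n \le 3$, the Sobolev embedding $\sobn22 \embed \sob1p$ for some $p \gt n$ ensures uniqueness and hence consistency with the previously obtained solution. The main technical obstacle I expect is the careful translation of Amann's abstract framework, in particular the identification of the conormal boundary operator induced by the non-symmetric $A$ with the prescribed Neumann condition, as well as verifying that the maximal existence time does not shrink when passing from the $\sob1p$-framework to the stronger $\sobn22$-framework.
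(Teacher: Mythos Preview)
Your overall strategy---casting \eqref{prob} as a quasilinear parabolic system with diffusion matrix $A(u,v)$ and invoking Amann's theory, checking normal ellipticity via trace and determinant on the nonnegative cone---matches the paper exactly. Two of your subsidiary steps, however, diverge in ways worth pointing out.

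First, the boundary condition: you correctly flag as an obstacle that Amann's framework imposes the \emph{conormal} condition $\nu \cdot A(U)\nabla U = 0$, i.e.\ $D_1\partial_\nu u - \chi_1 u\,\partial_\nu v = 0$ and $\chi_2 v\,\partial_\nu u + D_2\partial_\nu v = 0$, rather than $\partial_\nu u = \partial_\nu v = 0$ directly. The paper resolves this not abstractly but by a concrete computation exploiting the sign structure: substituting one relation into the other gives $\partial_\nu u = -\frac{\chi_1\chi_2}{D_1 D_2}\,uv\,\partial_\nu u$, and since $u,v \ge 0$ (from the nonnegativity already established via the scalar maximum principle), the factor $1 + \frac{\chi_1\chi_2}{D_1 D_2} uv$ is strictly positive, forcing $\partial_\nu u = 0$ and then $\partial_\nu v = 0$. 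This is a genuine step, depending on the opposite signs of the two taxis terms, and you should supply it rather than leave it as an anticipated difficulty.

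Second, the H\"older blow-up criterion~\eqref{eq:local_ex:ex_crit} is delivered directly by Amann's theory for the \emph{coupled} system (the paper cites his Theorem~15.3), so your contraposition via scalar Schauder estimates is an unnecessary detour---and a potentially circular one, since treating $\nabla\cdot(u\nabla v)$ ``as a drift with H\"older coefficients'' in the $u$-equation requires control of $\nabla v$, which a mere $C^\alpha$ bound on $v$ does not supply. Likewise, the $\sobn22$-continuity~\eqref{eq:local_ex:cont_sob22} follows directly from another part of Amann's framework (his Theorems~4.1 and~10.1) applied to the same system; no separate rerun of the construction---and in particular no restriction to $n\le 3$ via the embedding $\sobn22 \hookrightarrow \sob1p$---is needed at this stage.
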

\begin{proof}
  We will construct a solution $U$ to
  \begin{align}\label{eq:local_ex:U_eq}
    \begin{cases}
      U_t = \nabla \cdot (A(U) \nabla U) + F(U), & \text{in $\Omega \times (0, \tmax)$}, \\
      \nu \cdot A(U) \nabla U = 0,               & \text{on $\partial \Omega \times (0, \tmax)$}, \\
      U(\cdot, 0) = U_0,                         & \text{in $\Omega$},
    \end{cases}
  \end{align}
  where
  \begin{align*}
    A\begin{pmatrix}u \\ v\end{pmatrix} \defs
    \begin{pmatrix}
      D_1      & - \chi_1 u \\
      \chi_2 v & D_2
    \end{pmatrix},
    \quad
    F\begin{pmatrix}u \\ v\end{pmatrix} \defs
    \begin{pmatrix}
      u (\lambda_1 - \mu_1 u + a_1 v) \\
      v (\lambda_2 - \mu_2 v - a_2 u)
    \end{pmatrix}
    \quad \text{and} \quad
    U_0 \defs
    \begin{pmatrix}
      u_0 \\
      v_0
    \end{pmatrix}
  \end{align*}
  for $u, v \in \R$.
  Here and below, $\nabla (u, v)^T \defs (\nabla u, \nabla v)^T$, $\nu \cdot (a, b)^T \defs (\nu \cdot a, \nu \cdot b)^T$ etc.\
  for, say, $u, v \in \con1$ and $a, b \in \R^n$.

  If $u, v \ge 0$, then $\tr A((u, v)^T) = D_1 + D_2 \gt 0$ and $\det A((u, v)^T) = D_1 D_2 + \chi_1 \chi_2 u v \gt 0$,
  hence by continuity of the trace and the determinant,
  we may fix an (open) neighborhood $D_0$ of $[0, \infty)^2$ in $\R^2$
  such that the real parts of all eigenvalues of $A((u, v)^T)$ are still positive for all $u, v \in D_0$.
  Thus, defining the operators $\mc A, \mc B$ by
  $\mc A(\eta) U \defs \nabla \cdot (A(\eta) \nabla U)$
  and $\mc B(\eta) \defs \nu \cdot A(\eta) \nabla U$
  for $\eta \in D_0$ and $U \in (\sob2p)^2$,
  we see that $(\mc A(\eta), \mc B(\eta))$ are of separated divergence form
  and hence normally elliptic for all $\eta$ in $D_0$ (cf.\ \cite[Example~4.3(e)]{AmannNonhomogeneousLinearQuasilinear1993}).

  Therefore, we may apply \cite[Theorem~14.4, Theorem~14.6 and Corollary~14.7]{AmannNonhomogeneousLinearQuasilinear1993}
  to obtain $\tmax \gt 0$ and a unique $U \in C^0([0, \tmax); (\sob1p)^2) \cap (C^\infty(\Ombar \times (0, \tmax)))^2$
  solving \eqref{eq:local_ex:U_eq} classically.
  Moreover, since both components of $U$ are nonnegative by the maximum principle (for scalar equations),
  \cite[Theorem~15.3]{AmannNonhomogeneousLinearQuasilinear1993} asserts that in the case of $\tmax \lt \infty$ we have
  \begin{align*}
    \limsup_{t \nea \tmax} \|U(\cdot, t)\|_{(\con\alpha)^2} = \infty \qquad \text{for all $\alpha \in (0, 1)$}.
  \end{align*}
  Thus, $(u, v) \defs U^T$ satisfies the first, second and fourth equations in~\eqref{prob},
  if $\tmax \lt \infty$, then \eqref{eq:local_ex:ex_crit} holds
  and, moreover, $D_1 \partial_\nu u = \chi_1 u \partial_\nu v$ and $D_2 \partial_\nu v = -\chi_2 v \partial_\nu u$ 
  on $\partial \Omega \times (0, \tmax)$.
  As $u$ and $v$ are nonnegative,
  $\partial_\nu u = \frac{\chi_1}{D_1} u \partial_\nu v = -\frac{\chi_1 \chi_2}{D_1 D_2} uv \partial_\nu u$
  on $\partial \Omega \times (0, \tmax)$
  implies $\partial_\nu u \equiv 0$ on $\partial \Omega \times (0, \tmax)$.
  Analogously, we also obtain $\partial_\nu v \equiv 0$ on $\partial \Omega \times (0, \tmax)$,
  hence $(u, v)$ is the unique solution of regularity~\eqref{eq:local_ex:reg} to~\eqref{prob} in $\Ombar \times [0, \tmax)$.

  Since \cite[Theorem~4.1]{AmannNonhomogeneousLinearQuasilinear1993} further asserts
  that, for all $t \in (0, \tmax)$, the operator $\mc A(U(t))$ in $(\leb2)^2$ with $\mc D(\mc A(U(t))) = (\sobn22)^2$
  generates an analytical semigroup on $(\leb2)^2$,
  we may employ \cite[Theorem~10.1]{AmannNonhomogeneousLinearQuasilinear1993}
  to obtain \eqref{eq:local_ex:cont_sob22} for $u_0, v_0 \in \sobn22$.
\end{proof}

\paragraph{Fixing parameters.}
In the sequel, we fix $\Omega \subset \R^n$, $n \in \{1, 2, 3\}$,
parameters as in \eqref{eq:sss:cond_d_chi} and \eqref{eq:sss_h1} or \eqref{eq:sss_h2},
and define $(\ustar, \vstar)$ as in~\eqref{eq:sss:def_ustar}.
Moreover, we set henceforth $\ol \varphi \defs \frac1{|\Omega|} \intom \varphi$ for $\varphi \in \leb1$.

As we will see later in the proofs of Lemma~\ref{lm:phi_abc} and Lemma~\ref{lm:ddt_delta_u_2_h2_pos_zero},
$\sob22$~continuity of both solution components up to $t=0$ will turn out to be crucial.
By Lemma~\ref{lm:local_ex}, this can be achieved if one supposes that $u_0, v_0$ satisfy~\eqref{eq:sss:cond_init}.
Given such initial data, we will denote the solution to~\eqref{prob} constructed in Lemma~\ref{lm:local_ex}
by $(u(u_0, v_0), v(u_0, v_0))$ and its maximal existence time by $\tmax(u_0, v_0)$.
After fixing $(u_0, v_0)$, we will often for the sake of brevity write $(u, v)$ and $\tmax$, respectively, instead.
Also note that all constants below (for instance the $c_i$, $i \in \N$, in several proofs)
depend only on the parameters fixed above, not on $u_0$ and $v_0$.

\paragraph{The functions $f$ and $g$.}
Furthermore, we abbreviate 
\begin{align*}
  f(u, v) \defs u(\lambda_1 - \mu_1 u + a_1 v)
  \quad \text{and} \quad
  g(u, v) \defs v(\lambda_2 - \mu_2 v - a_2 u)
  \qquad \text{for $u, v \gt 0$}.
\end{align*}
Note that $f(\ustar, \vstar) = 0 = g(\ustar \vstar)$ and
\begin{align*}
  \begin{pmatrix}
    f_u(u, v) & f_v(u, v) \\
    g_u(u, v) & g_v(u, v)
  \end{pmatrix}
  = 
  \begin{pmatrix}
    \lambda_1 - 2 \mu_1 u + a_1 v & a_1 u \\
    -a_2 v                        & \lambda_2 - 2 \mu_2 v - a_2 u
  \end{pmatrix}
  \qquad \text{for $u, v \ge 0$},
\end{align*}
that is,
\begin{align*}
    \begin{pmatrix}
      f_u(\ustar, \vstar) & f_v(\ustar, \vstar) \\
      g_u(\ustar, \vstar) & g_v(\ustar, \vstar)
    \end{pmatrix}
  = \begin{cases}
      \begin{pmatrix}
        0 & 0 \\
        0 & 0
      \end{pmatrix},
      & \text{if \eqref{eq:sss_h1} holds}, \\[2em]
      \begin{pmatrix}
        - \mu_1 \ustar & a_1 \ustar \\
        - a_2 \vstar   & -\mu_2 \vstar
      \end{pmatrix},
      & \text{if \eqref{eq:sss_h2} holds and } \lambda_2 \mu_1 \gt \lambda_1 a_2, \\[2em]
      \begin{pmatrix}
        - \lambda_1 & a_1 \ustar \\
        0   & \lambda_2 - \frac{\lambda_1 a_2}{\mu_1}
      \end{pmatrix},
     & \text{if \eqref{eq:sss_h2} holds and } \lambda_2 \mu_1 \le \lambda_1 a_2.
    \end{cases}
\end{align*}
Thus,
\begin{align}\label{eq:fu_gv_ge_0}
  f_u(\ustar, \vstar) \le 0
  \quad \text{as well as} \quad
  g_v(\ustar, \vstar) \le 0
\end{align}
and
\begin{align} \label{eq:fu_gv_gt_0}
  \text{if \eqref{eq:sss_h2} holds and $\lambda_2 \mu_1 \neq \lambda_1 a_2$, then }
  f_u(\ustar, \vstar) \lt 0
  \text{ as well as } 
  g_v(\ustar, \vstar) \lt 0.
\end{align}

%Moreover,
%\begin{align*}
  %f(u, \vstar) &= f_u(\ustar, \vstar) (u - \ustar) + \frac{f_{uu}(\ustar, \vstar)}{2} (u - \ustar)^2 
  %\quad \text{for $u, v \ge 0$}
%\intertext{and}
  %g(\ustar, v) &= g_v(\ustar, \vstar) (v - \vstar) + \frac{g_{vv}(\ustar, \vstar)}{2} (v - \vstar)^2 
  %\quad \text{for $u, v \ge 0$}.
%\end{align*}

\section{Estimates within \tops{$[0, T_\eta)$}{[0, T eta)}}
For $u_0, v_0$ satisfying \eqref{eq:sss:cond_init} and $\eta \gt 0$, set
\begin{align}\label{eq:def_t_eta}
        T_\eta(u_0, v_0)
  \defs \sup \left\{\,
          t \in (0, \tmax(u_0, v_0)): \|u(u_0, v_0) - \ustar\|_{\leb\infty} + \|v(u_0, v_0) - \vstar\|_{\leb\infty} \lt \eta
          \text{ in $(0, t)$}\,\right\}
\end{align}
(with the convention $\sup \emptyset \defs -\infty$).
When confusion seems unlikely, we abbreviate $T_\eta \defs T_\eta(u_0, v_0)$.

In the sequel, we will derive several estimates within $(0, T_\eta)$.
Obviously, if $(0, T_\eta) = \emptyset$, the statements below are trivially true.
Thus upon reading the proofs, the reader might as well always assume that $(0, T_\eta)$ is not empty.
The only exception is Lemma~\ref{lm:main_proof},
where we finally choose $\eps \gt 0$ in \eqref{eq:sss:cond_eps} sufficiently small and guarantee that $T_\eta \gt 0$ for certain $\eta \gt 0$.

Note that $T_{\eta_1} \le T_{\eta_2}$ for $\eta_1 \le \eta_2$.
Moreover,
\begin{align}\label{eq:u_ol_u_eta}
      \|u - \ol u\|_{\leb \infty}
  \le \|u - \ustar\|_{\leb \infty} + \|\ol u - \ustar\|_{\leb \infty}
  =   \|u - \ustar\|_{\leb \infty} + \frac1{|\Omega|} \left| \intom (u - \ustar) \right|
  \le 2 \eta
  \qquad \text{in $(0, T_\eta)$}
\end{align}
and likewise
\begin{align}\label{eq:v_ol_v_eta}
      \|v - \ol v\|_{\leb \infty}
  \le 2 \eta
  \qquad \text{in $(0, T_\eta)$}
\end{align}
for all $\eta \gt 0$,
where $(u, v, \tmax) = (u(u_0, v_0), v(u_0, v_0), \tmax(u_0, v_0))$ for any $u_0, v_0$ complying with~\eqref{eq:sss:cond_init}.

In the remainder of this section,
we derive estimates in $(0, T_\eta)$ for positive linear combinations of
\begin{align}\label{eq:pairs}
  \ddt \intom (u - \ustar)^2
  \quad &\text{and} \quad
  \ddt \intom (v - \vstar)^2, \notag \\
  \ddt \intom |\nabla u|^2
  \quad &\text{and} \quad
  \ddt \intom |\nabla v|^2 \qquad \text{as well as} \qquad \notag \\
  \ddt \intom |\Delta u|^2
  \quad &\text{and} \quad
  \ddt \intom |\Delta v|^2.
\end{align}
We begin by treating the first pair in
\begin{lemma}\label{lm:ddt_u_2_single}
  There is $\eta_0 \gt 0$
  such that if $u_0, v_0$ comply with \eqref{eq:sss:cond_init}
  and $(u, v) = (u(u_0, v_0), v(u_0, v_0))$ denotes the corresponding solution,
  then
  \begin{align}\label{eq:ddt_u_2_single_u}
    &\pe  \frac12 \ddt \intom (u - \ustar)^2
          + \frac{3 D_1}{4} \intom |\nabla u|^2
          +  \left( -f_{u}(\ustar, \vstar) - \eta (a_1 + \mu_1) \right) \intom (u - \ustar)^2  \notag \\
    &\le  a_1 \ustar \intom (u - \ustar) (v - \vstar)
          +  \chi_1 \ustar \intom \nabla u \cdot \nabla v 
          + \frac{ \eta \chi_1}{2} \intom |\nabla v|^2
  \intertext{and}\label{eq:ddt_u_2_single_v}
    &\pe  \frac12 \ddt \intom (v - \vstar)^2
          + \frac{3 D_2}{4} \intom |\nabla v|^2
          +  \left( -g_{v}(\ustar, \vstar) - \eta (a_2 + \mu_2) \right) \intom (v - \vstar)^2  \notag \\
    &\le  -  a_2 \vstar \intom (u - \ustar) (v - \vstar)
          -  \chi_2 \ustar \intom \nabla u \cdot \nabla v 
          + \frac{ \eta \chi_2}{2} \intom |\nabla u|^2
  \end{align}
  hold in $(0, T_\eta)$ for all $\eta \in (0, \eta_0)$, where $T_\eta$ is given by \eqref{eq:def_t_eta}.
\end{lemma}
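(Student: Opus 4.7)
I would prove both estimates by the same recipe, so let me sketch only \eqref{eq:ddt_u_2_single_u} in detail. Multiplying the first equation in \eqref{prob} by $u - \ustar$, integrating over $\Omega$ and invoking the homogeneous Neumann boundary conditions $\partial_\nu u = \partial_\nu v = 0$ yields
\[
  \tfrac12 \ddt \intom (u - \ustar)^2 = -D_1 \intom |\nabla u|^2 + \chi_1 \intom u \, \nabla u \cdot \nabla v + \intom (u - \ustar) f(u, v).
\]
The crucial bookkeeping step is to split the taxis term via $u = \ustar + (u - \ustar)$, producing a ``principal'' contribution $\chi_1 \ustar \intom \nabla u \cdot \nabla v$ (kept as-is, since this is exactly the term designed to cancel against its twin from the $v$-equation in later lemmas) and a perturbative remainder $\chi_1 \intom (u - \ustar) \nabla u \cdot \nabla v$.

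For the reaction term I would exploit that $f$ is a polynomial of total degree two, so the Taylor expansion about $(\ustar, \vstar)$ is exact:
\[
  f(u, v) = f_u(\ustar, \vstar)(u - \ustar) + a_1 \ustar (v - \vstar) - \mu_1 (u - \ustar)^2 + a_1 (u - \ustar)(v - \vstar),
\]
where I have used $f(\ustar, \vstar) = 0$ and $f_v(\ustar, \vstar) = a_1 \ustar$. Multiplying by $u - \ustar$ and integrating reproduces the linear term $f_u(\ustar, \vstar) \intom (u - \ustar)^2$ and the cross term $a_1 \ustar \intom (u - \ustar)(v - \vstar)$ appearing in the statement, plus two cubic remainders $-\mu_1 \intom (u - \ustar)^3$ and $a_1 \intom (u - \ustar)^2 (v - \vstar)$; both are controlled pointwise in $(0, T_\eta)$ via $\|u - \ustar\|_{\leb\infty}, \|v - \vstar\|_{\leb\infty} \lt \eta$ and together contribute at most $\eta(a_1 + \mu_1) \intom (u - \ustar)^2$ in absolute value.

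It remains to absorb the perturbative taxis piece. Using the same $L^\infty$ control and Young's inequality,
\[
  \chi_1 \intom (u - \ustar) \nabla u \cdot \nabla v \le \chi_1 \eta \intom |\nabla u|\,|\nabla v| \le \tfrac{D_1}{4} \intom |\nabla u|^2 + \tfrac{\chi_1^2 \eta^2}{D_1} \intom |\nabla v|^2.
\]
Choosing $\eta_0 \defs \min\{D_1/(2\chi_1), D_2/(2\chi_2)\}$ ensures $\chi_1^2 \eta^2/D_1 \le \eta \chi_1 / 2$ for every $\eta \in (0, \eta_0)$, so the last coefficient fits the form $\tfrac{\eta \chi_1}{2} \intom |\nabla v|^2$ required by \eqref{eq:ddt_u_2_single_u}, while the first summand reduces the diffusion coefficient from $D_1$ to $3 D_1/4$. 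Assembling these pieces gives \eqref{eq:ddt_u_2_single_u}, and the completely analogous computation with the second equation of \eqref{prob} tested against $v - \vstar$ (using the exact Taylor expansion of $g$ about $(\ustar, \vstar)$ with $g_u(\ustar, \vstar) = -a_2 \vstar$) produces \eqref{eq:ddt_u_2_single_v}. I do not anticipate any genuine obstacle here: no clever cancellation is used at this stage, and the whole lemma is a careful exercise in integration by parts and Young's inequality; the only subtlety is picking $\eta_0$ simultaneously compatible with both gradient-cross-term bounds.
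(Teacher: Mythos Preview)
Your proposal is correct and follows essentially the same route as the paper: test the equation against $u-\ustar$, split $u=\ustar+(u-\ustar)$ in the taxis term, Taylor-expand $f$ exactly, and absorb the perturbative pieces using the $L^\infty$ smallness on $(0,T_\eta)$ together with Young's inequality. The only cosmetic difference is that the paper applies the symmetric Young bound $\chi_1\intom(u-\ustar)\nabla u\cdot\nabla v\le\frac{\eta\chi_1}{2}\intom|\nabla u|^2+\frac{\eta\chi_1}{2}\intom|\nabla v|^2$ (and then uses $D_1-\frac{\eta_0\chi_1}{2}\ge\frac{3D_1}{4}$), whereas you split asymmetrically to hit $\frac{D_1}{4}\intom|\nabla u|^2$ exactly and then dominate $\frac{\chi_1^2\eta^2}{D_1}$ by $\frac{\eta\chi_1}{2}$; both choices lead to the very same $\eta_0=\frac12\min\{D_1/\chi_1,D_2/\chi_2\}$.
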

\begin{proof}
  Let
  \begin{align}\label{eq:ddt_u_2:eta_0}
    \eta_0 \defs \frac12 \min\left\{\frac{D_1}{\chi_1}, \frac{D_2}{\chi_2} \right\}.
  \end{align}

  Fixing $u_0, v_0$ satisfying with~\eqref{eq:sss:cond_init},
  by a direct calculation, we see that
  \begin{align*}
        \frac12 \ddt \intom (u - \ustar)^2
        +  D_1 \intom |\nabla u|^2
    =   \chi_1 \intom u \nabla u \cdot \nabla v
        + \intom f(u, v) (u-\ustar)
  \end{align*}
  holds in $(0, \tmax)$.

  For any $\eta \gt 0$,
  we have therein by Young's inequality
  \begin{align*}
          \chi_1 \intom u \nabla u \cdot \nabla v
    &=    \chi_1 \ustar \intom \nabla u \cdot \nabla v
          + \chi_1 \intom (u - u _\star) \nabla u \cdot \nabla v \\
    &\le  \chi_1 \ustar \intom \nabla u \cdot \nabla v
          + \frac{\eta \chi_1}{2} \intom |\nabla u|^2
          + \frac{\eta \chi_1}{2} \intom |\nabla v|^2
    \qquad \text{in $(0, T_\eta)$}.
  \end{align*}

  Moreover, as $f(\ustar, \vstar) = 0$,
  \begin{align*}
          \intom f(u, v) (u - \ustar)
    &=    \intom f(u, \vstar) (u - \ustar)
          + a_1 \intom  u(v - \vstar) (u - \ustar) \\
    &=    f_u(\ustar, \vstar) \intom (u - \ustar)^2
          + \frac{f_{uu}(\ustar, \vstar)}{2} \intom (u - \ustar)^3 \\
    &\pe  + a_1 \intom (u - \ustar)^2 (v - \vstar)
          + a_1 \ustar \intom (u - \ustar) (v - \vstar)
    \qquad \text{in $(0, \tmax)$}.
  \end{align*}

  Since $f_{uu}(\ustar, \vstar) = -2\mu_1$,
  we may further estimate
  \begin{align*}
         \frac{f_{uu}(\ustar, \vstar)}{2} \intom (u - \ustar)^3
    \le \eta \mu_1 \intom (u - \ustar)^2
    \qquad \text{in $(0, T_\eta)$ for all $\eta \gt 0$}
  \end{align*}
  and
  \begin{align*}
         a_1 \intom (u - \ustar)^2 (v - \vstar)
    \le \eta a_1 \intom (u - \ustar)^2
    \qquad \text{in $(0, T_\eta)$ for all $\eta \gt 0$}.
  \end{align*}

  Noting that \eqref{eq:ddt_u_2:eta_0} implies $D_1 - \frac{\eta_0 \chi_1}{2} \ge \frac34 D_1$,
  we may combine these estimates to obtain \eqref{eq:ddt_u_2_single_u},
  while \eqref{eq:ddt_u_2_single_v} follows from an analogous computation.
\end{proof}

For sufficiently small $\eta$ and suitable linear combinations of \eqref{eq:ddt_u_2_single_u} and \eqref{eq:ddt_u_2_single_v},
the terms $\frac{\eta \chi_1}{2} \intom |\nabla v|^2$ and $\frac{\eta \chi_2}{2} \intom |\nabla u|^2$
can be absorbed by the dissipative terms therein.
\begin{lemma}\label{lm:ddt_u_2}
  For any $A_1, A_2 \gt 0$, there is $\eta_0 \gt 0$ such that
  whenever $u_0, v_0$ satisfy~\eqref{eq:sss:cond_init},
  then the corresponding solution $(u, v) = (u(u_0, v_0), v(u_0, v_0))$ satisfies
  \begin{align}\label{eq:ddt_u_2:statement}
    &\pe  \ddt \left( \frac{A_1}{2} \intom (u - \ustar)^2 + \frac{A_2}{2} \intom (v - \vstar)^2 \right)
          + \frac{A_1 D_1}{2} \intom |\nabla u|^2
          + \frac{A_2 D_2}{2} \intom |\nabla v|^2 \notag \\
    &\pe  + A_1 \left( -f_{u}(\ustar, \vstar) - \eta (a_1 + \mu_1) \right) \intom (u - \ustar)^2 
          + A_2 \left( -g_{v}(\ustar, \vstar) - \eta (a_2 + \mu_2) \right) \intom (v - \vstar)^2  \notag \\
    &\le  (A_1 a_1 \ustar - A_2 a_2 \vstar) \intom (u - \ustar) (v - \vstar)
          + (A_1 \chi_1 \ustar - A_2 \chi_2 \vstar) \intom \nabla u \cdot \nabla v 
    \qquad \text{in $(0, T_\eta)$}
  \end{align}
  for all $\eta \lt \eta_0$, where $T_\eta$ is as in~\eqref{eq:def_t_eta}.
\end{lemma}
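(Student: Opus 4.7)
The plan is to form the positive linear combination of the two inequalities from Lemma~\ref{lm:ddt_u_2_single}. Concretely, I would multiply \eqref{eq:ddt_u_2_single_u} by $A_1$, multiply \eqref{eq:ddt_u_2_single_v} by $A_2$ and add. The time-derivative terms combine into the desired $\ddt\!\left(\frac{A_1}{2}\intom(u-\ustar)^2+\frac{A_2}{2}\intom(v-\vstar)^2\right)$; the lower-order terms of zeroth order reproduce the required coefficients verbatim; and the right-hand sides combine additively, with the differences $A_1 a_1 \ustar - A_2 a_2 \vstar$ and $A_1 \chi_1 \ustar - A_2 \chi_2 \vstar$ arising precisely because the two inequalities carry opposite signs in front of the bilinear terms $\intom(u-\ustar)(v-\vstar)$ and $\intom\nabla u\cdot\nabla v$. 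After this rearrangement, the only discrepancy from the claimed estimate~\eqref{eq:ddt_u_2:statement} is the pair of stray gradient terms $A_1\frac{\eta\chi_1}{2}\intom|\nabla v|^2$ and $A_2\frac{\eta\chi_2}{2}\intom|\nabla u|^2$ produced on the right-hand side by Lemma~\ref{lm:ddt_u_2_single}.

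These residual terms must be absorbed by the dissipative integrals on the left, of which the combined estimate provides $\frac{3A_1D_1}{4}\intom|\nabla u|^2+\frac{3A_2D_2}{4}\intom|\nabla v|^2$, whereas only $\frac{A_1D_1}{2}\intom|\nabla u|^2+\frac{A_2D_2}{2}\intom|\nabla v|^2$ is required to survive. Thus it suffices to secure
\begin{align*}
  A_2\, \frac{\eta\chi_2}{2} \le \frac{A_1D_1}{4}
  \qquad\text{and}\qquad
  A_1\, \frac{\eta\chi_1}{2} \le \frac{A_2D_2}{4},
\end{align*}
which one guarantees by prescribing
\begin{align*}
  \eta_0 \defs \min\!\left\{\eta_0^{\ast},\ \frac{A_1D_1}{2A_2\chi_2},\ \frac{A_2D_2}{2A_1\chi_1}\right\},
\end{align*}
where $\eta_0^\ast$ is the $\eta_0$ furnished by Lemma~\ref{lm:ddt_u_2_single}. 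For every $\eta\in(0,\eta_0)$ this yields~\eqref{eq:ddt_u_2:statement} on $(0,T_\eta)$.

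There is no real obstacle here: the argument is a direct algebraic combination followed by an absorption step, and the choice of $\eta_0$ depends only on the parameters fixed in the preliminaries together with $A_1,A_2$, hence not on $u_0, v_0$. The only thing to double-check is that $\eta_0$ may indeed be chosen strictly positive, which is evident since $A_1,A_2,D_1,D_2,\chi_1,\chi_2$ are all positive. The freedom to pick $A_1$ and $A_2$ afterwards will be exploited in the subsequent arguments (for instance to tune $A_1\chi_1\ustar - A_2\chi_2\vstar$) but plays no role in the present lemma.
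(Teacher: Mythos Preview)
Your proposal is correct and follows essentially the same approach as the paper: multiply \eqref{eq:ddt_u_2_single_u} and \eqref{eq:ddt_u_2_single_v} by $A_1$ and $A_2$, add, and choose $\eta_0$ small enough (depending on $A_1,A_2$ and the $\eta_0$ from Lemma~\ref{lm:ddt_u_2_single}) so that the residual terms $\frac{A_2\eta\chi_2}{2}\intom|\nabla u|^2$ and $\frac{A_1\eta\chi_1}{2}\intom|\nabla v|^2$ are absorbed by the surplus $\frac{A_1D_1}{4}\intom|\nabla u|^2+\frac{A_2D_2}{4}\intom|\nabla v|^2$.
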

\begin{proof}
  Lemma~\ref{lm:ddt_u_2_single} allows us
  to choose $\eta_1$ such that \eqref{eq:ddt_u_2_single_u} and \eqref{eq:ddt_u_2_single_v} hold in $(0, T_{\eta_1})$.
  Let moreover  $A_1, A_2 \gt 0$,
  fix $\eta_2 \gt 0$ sufficiently small such that
  \begin{align*}
    \frac{A_2 \eta_2 \chi_2}{2} \le \frac{A_1 D_1}{4}
    \quad \text{and} \quad
    \frac{A_1 \eta_2 \chi_1}{2} \le \frac{A_2 D_2}{4}
  \end{align*}
  and set $\eta_0 \defs \min\{\eta_0, \eta_1\}$.
   
  The statement then immediately follows upon multiplying \eqref{eq:ddt_u_2_single_u} and \eqref{eq:ddt_u_2_single_v}
  with $A_1$ and $A_2$, respectively, and adding these inequalities together.
\end{proof}

Next, we handle the second pair in \eqref{eq:pairs}, this time only in a coupled version.
\begin{lemma}\label{lm:ddt_nabla_u_2}
  Let $B_1, B_2 \gt 0$.
  There is $\eta \gt 0$ such that for any $u_0, v_0$ complying with~\eqref{eq:sss:cond_init}
  we have
  \begin{align*}
    &\pe  \ddt \left( \frac{B_1}{2} \intom |\nabla u|^2 + \frac{B_2}{2} \intom |\nabla v|^2 \right)
          + \frac{B_1D_1}{2} \intom |\Delta u|^2
          + \frac{B_2D_2}{2} \intom |\Delta v|^2 \\
    &\le  (B_1 a_1 \ustar - B_2 a_2 \vstar) \intom \nabla u \cdot \nabla v
          + (B_1 \chi_1 \ustar - B_2 \chi_2 \vstar) \intom \Delta u \Delta v
    \qquad \text{in $(0, T_\eta)$},
  \end{align*}
  where again $(u, v) \defs (u(u_0, v_0), v(u_0, v_0))$ and $T_\eta \defs T_\eta(u_0, v_0))$ is given by~\eqref{eq:def_t_eta}.
\end{lemma}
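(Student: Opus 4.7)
The plan is to test the two PDEs with $-B_1 \Delta u$ and $-B_2 \Delta v$ respectively (which is allowed since $\partial_\nu u = \partial_\nu v = 0$), integrate by parts, and then carefully split every nonlinearity around the steady state $(\ustar, \vstar)$ so that the ``clean'' terms forming the RHS of the claim emerge and everything else can be shown to carry a factor of $\eta$. Concretely, for the first equation one obtains
\begin{align*}
  \frac{B_1}{2}\ddt \intom |\nabla u|^2 + B_1 D_1 \intom |\Delta u|^2
  = B_1 \chi_1 \intom \Delta u\,\nabla\cdot(u\nabla v) + B_1 \intom \nabla f(u,v) \cdot \nabla u,
\end{align*}
and analogously for the second equation.

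The key algebraic manipulations are, first, to write $\nabla\cdot(u \nabla v) = \nabla u \cdot \nabla v + u \Delta v$ and split $u \Delta v = \ustar \Delta v + (u - \ustar) \Delta v$; this produces the clean contribution $B_1 \chi_1 \ustar \intom \Delta u \Delta v$, a remainder $B_1 \chi_1 \intom (u-\ustar)\Delta u \Delta v$ which by Young and $|u-\ustar| < \eta$ is bounded by $\frac{B_1 \chi_1 \eta}{2}(\|\Delta u\|_{L^2}^2 + \|\Delta v\|_{L^2}^2)$, and the genuinely cubic $B_1 \chi_1 \intom (\nabla u \cdot \nabla v)\Delta u$. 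Second, integrating by parts $-\intom f(u,v)\Delta u = \intom f_u(u,v)|\nabla u|^2 + \intom f_v(u,v) \nabla u \cdot \nabla v$ and expanding $f_u$ and $f_v$ around $(\ustar, \vstar)$ extracts the clean linear term $B_1 a_1 \ustar \intom \nabla u \cdot \nabla v$ (since $f_v(\ustar, \vstar) = a_1 \ustar$), while $f_u(\ustar,\vstar) \le 0$ by \eqref{eq:fu_gv_ge_0} makes the remaining $|\nabla u|^2$-term harmless up to an $\eta$-small error. Adding the $v$-analogue multiplied by $B_2$ and exploiting the \emph{opposite signs} of the two taxis terms in \eqref{prob} yields $-B_2 \chi_2 \vstar \intom \Delta u \Delta v - B_2 a_2 \vstar \intom \nabla u \cdot \nabla v$, and these combine with the $B_1$-block into exactly the RHS appearing in the lemma.

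The main obstacle is the cubic term $B_1 \chi_1 \intom (\nabla u \cdot \nabla v)\Delta u$ (and its $v$-analogue), which carries no obvious $\eta$-smallness. The plan is to apply one of the Gagliardo--Nirenberg-type inequalities from the appendix to $u - \ustar$ and $v - \vstar$ (both of which are $W^{2,2}$ functions with vanishing outward normal derivative, so the estimate without additive correction applies), yielding
\begin{align*}
  \|\nabla u\|_{L^4}^2 \le C \|\Delta u\|_{L^2}\,\|u - \ustar\|_{L^\infty} \le C \eta \|\Delta u\|_{L^2}
  \qquad \text{in } (0, T_\eta),
\end{align*}
and likewise for $v$. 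Combined with Hölder in the triple $L^2 \cdot L^4 \cdot L^4$ and Young, this controls the cubic term by $C \chi_1 \eta\bigl(\|\Delta u\|_{L^2}^2 + \|\Delta v\|_{L^2}^2\bigr)$.

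Finally, the lower-order remainders of the form $C\eta (\|\nabla u\|_{L^2}^2 + \|\nabla v\|_{L^2}^2)$ coming from $f_u, f_v$ are handled via the Poincaré-type estimate $\|\nabla u\|_{L^2} \le \cp \|\Delta u\|_{L^2}$, which follows by testing $\|\nabla u\|_{L^2}^2 = -\intom (u - \overline u) \Delta u$ with Poincaré--Wirtinger (and similarly for $v$). Choosing $\eta > 0$ small enough, depending on $B_1, B_2, D_1, D_2$ and the fixed parameters, all error contributions are absorbed into the halves $\frac{B_1 D_1}{2} \intom |\Delta u|^2$ and $\frac{B_2 D_2}{2} \intom |\Delta v|^2$ left on the left-hand side, which yields the stated inequality on $(0, T_\eta)$.
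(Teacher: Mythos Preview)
Your proposal is correct and follows essentially the same route as the paper: test the equations against $-\Delta u$ and $-\Delta v$, split $u = \ustar + (u-\ustar)$ (and likewise for $v$) in the taxis and reaction terms to extract the ``clean'' contributions on the right-hand side, and absorb all remainders---including the cubic $\intom (\nabla u\cdot\nabla v)\Delta u$ via the Gagliardo--Nirenberg estimate $\|\nabla\varphi\|_{L^4}^4 \le C\|\varphi-\ol\varphi\|_{L^\infty}^2\|\Delta\varphi\|_{L^2}^2$ from Lemma~\ref{lm:gni_sob_22} and the gradient remainders via Poincar\'e (Lemma~\ref{lm:poincare})---into the dissipative terms by taking $\eta$ small. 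The only cosmetic discrepancy is that the appendix inequality yields $\|u-\ol u\|_{L^\infty}$ rather than $\|u-\ustar\|_{L^\infty}$, but this is harmless since $\|u-\ol u\|_{L^\infty}\le 2\|u-\ustar\|_{L^\infty}\le 2\eta$ on $(0,T_\eta)$ (cf.\ \eqref{eq:u_ol_u_eta}).
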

\begin{proof}
  Let $B_1, B_2 \gt 0$.
  We begin by fixing some parameters:
  By the Gagliardo--Nirenberg inequality~\ref{lm:gni_sob_22},
  there is $c_1 \gt 0$ such that
  \begin{align}\label{eq:ddt_nabla_u_2:gni}
    \intom |\nabla \varphi|^4 \le c_1 \|\varphi - \ol \varphi\|_{\leb \infty}^2 \intom |\Delta \varphi|^2
    \qquad \text{for all $\varphi \in \con2$ with $\partial_\nu \varphi = 0$ on $\partial \Omega$}.
  \end{align}
  Choose $\eta \gt 0$ so small that
  \begin{align*}
            M_1(\eta)
    &\defs  \frac{B_1 \eta \chi_1}{2}
            + \frac{B_2 \eta \chi_2}{2}
            + \frac{2 B_1 \eta^2 \chi_1^2 c_1}{D_1}
            + \frac{2 B_2 \eta^2 \chi_2^2 c_1}{D_2}
            + B_1 \cp \eta (2\mu_1 + a_1)
            + \frac{B_1 \cp a_1 \eta}{2}
            + \frac{B_2 \cp a_2 \eta}{2}
    \intertext{and}
            M_2(\eta)
    &\defs  \frac{B_1 \eta \chi_1}{2}
            + \frac{B_2 \eta \chi_2}{2}
            + \frac{2 B_1 \eta^2 \chi_1^2 c_1}{D_1}
            + \frac{2 B_2 \eta^2 \chi_2^2 c_1}{D_2}
            + B_2 \cp \eta (2\mu_2 + a_2)
            + \frac{B_1 \cp a_1 \eta}{2}
            + \frac{B_2 \cp a_2 \eta}{2},
  \end{align*}
  where $\cp$ is as in Lemma~\ref{lm:poincare},
  fulfill
  \begin{align}\label{eq:ddt_nabla_u_2:mi_small}
    M_1(\eta) \lt \frac{B_1 D_1}{4}
    \quad \text{and} \quad
    M_2(\eta) \lt \frac{B_2 D_2}{4}.
  \end{align}

  Fixing $u_0, v_0$ as in \eqref{eq:sss:cond_init}, we calculate
  \begin{align*}
        \frac12 \ddt \intom |\nabla u|^2 + D_1 \intom |\Delta u|^2
    &=  \chi_1 \intom u \Delta u \Delta v
        + \chi_1 \intom \nabla u \cdot \nabla v \Delta u
        + \intom f_u(u, v) |\nabla u|^2
        + a_1 \intom u \nabla u \cdot \nabla v \\
    &\sfed I_1 + I_2 + I_3 + I_4
    \qquad \text{in $(0, \tmax)$}.
  \end{align*}

  Therein is
  \begin{align*}
          I_1
    &=    \chi_1 \ustar \intom \Delta u \Delta v
          + \chi_1 \intom (u - \ustar) \Delta u \Delta v \\
    &\le  \chi_1 \ustar \intom \Delta u \Delta v
          + \frac{\eta \chi_1}{2} \intom |\Delta u|^2
          + \frac{\eta \chi_1}{2} \intom |\Delta v|^2
    \qquad \text{in $(0, T_\eta)$.}
  \end{align*}

  Furthermore, by \eqref{eq:ddt_nabla_u_2:gni}, \eqref{eq:u_ol_u_eta} and Young's inequality,
  \begin{align*}
          I_2
    &\le  \frac{D_1}{4} \intom |\Delta u|^2 + \frac{\chi_1^2}{D_1} \intom |\nabla u|^2 |\nabla v|^2 \\
    &\le  \frac{D_1}{4} \intom |\Delta u|^2
          + \frac{\chi_1^2}{2D_1} \intom |\nabla u|^4
          + \frac{\chi_1^2}{2D_1} \intom |\nabla v|^4 \\
    &\le  \frac{D_1}{4} \intom |\Delta u|^2
          + \frac{2 \eta^2 \chi_1^2 c_1}{D_1} \intom |\Delta u|^2
          + \frac{2 \eta^2 \chi_1^2 c_1}{D_1} \intom |\Delta v|^2
    \qquad \text{in $(0, T_\eta)$.}
  \end{align*}

  Moreover, due to~\eqref{eq:fu_gv_ge_0}, by the mean value theorem, as $f_{uu} \equiv 2\mu_1$ and $f_{uv} \equiv a_1$
  and because of the Poincar\'e inequality~\ref{lm:poincare} (with $\cp \gt 0$ as in that lemma),
  \begin{align*}
          I_3
    &\le  \intom (f_u(u, v) - f_u(\ustar, \vstar)) |\nabla u|^2 \\
    &\le  \intom \left(
            \|f_{uu}\|_{L^\infty((0, \infty)^2)} |u - \ustar|
            + \|f_{uv}\|_{L^\infty((0, \infty)^2)} |v - \vstar|
          \right) |\nabla u|^2 \\
    &\le  \eta (2\mu_1 + a_1) \cp \intom |\Delta u|^2
    \qquad \text{in $(0, T_\eta)$.}
  \end{align*}

  Finally, by Young's inequality and the Poincar\'e inequality~\ref{lm:poincare} (again with $\cp \gt 0$ as in that lemma),
  \begin{align*}
          I_4
    &=    a_1 \ustar \intom \nabla u \cdot \nabla v
          + a_1 \intom (u - \ustar) \nabla u \cdot \nabla v \\
    &\le  a_1 \ustar \intom \nabla u \cdot \nabla v
          + \frac{\eta a_1 \cp}{2} \left( \intom |\Delta u|^2 + \intom |\Delta v|^2 \right)
    \qquad \text{in $(0, T_\eta)$.}
  \end{align*}

  Along with an analogous computation for $v$,
  these estimates imply
  \begin{align*}
    &\pe  \ddt \left( \frac{B_1}{2} \intom |\nabla u|^2 + \frac{B_{2}}{2} \intom |\nabla v|^2 \right) \\
    &\pe  + \left( \frac{3 B_1 D_1}{4} - M_1(\eta) \right) \intom |\Delta u|^2
          + \left( \frac{3 B_2 D_2}{4} - M_2(\eta) \right) \intom |\Delta v|^2 \\
    &\le  (B_1 a_1 \ustar - B_2 a_2 \vstar) \intom \nabla u \cdot \nabla v
          + (B_1 \chi_1 \ustar - B_2 \chi_2 \vstar) \intom \Delta u \Delta v
    \qquad \text{in $(0, T_\eta)$}.
  \end{align*}
  The statement follows due to \eqref{eq:ddt_nabla_u_2:mi_small}.
\end{proof}

At last, we deal with the third pair in \eqref{eq:pairs}.
\begin{lemma}\label{lm:ddt_delta_u_2}
  For any $C_1, C_2 \gt 0$, there exists $\eta \gt 0$ such that  
  $(u, v, T_\eta) \defs (u(u_0, v_0), v(u_0, v_0), T_\eta(u_0, v_0))$, where $T_\eta$ is defined in \eqref{eq:def_t_eta}, satisfies
  \begin{align*}
    &\pe  \ddt \left( \frac{C_1}{2} \intom |\Delta u|^2 + \frac{C_2}{2} \intom |\Delta v|^2 \right)
          + \frac{C_1 D_1}{2} \intom |\nabla \Delta u|^2 + \frac{C_1 D_2}{2} \intom |\nabla \Delta v|^2 \\
    &\le  (C_1 a_1 \ustar - C_2 a_2 \vstar) \intom \Delta u \Delta v
          + (C_1 \chi_1 \ustar - C_2 \chi_2 \vstar) \intom \nabla \Delta u \cdot \nabla \Delta v
    \qquad \text{in $(0, T_\eta)$},
  \end{align*}
  provided $u_0, v_0$ fulfill~\eqref{eq:sss:cond_init}.
\end{lemma}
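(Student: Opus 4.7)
The strategy is to mimic the proof of Lemma~\ref{lm:ddt_nabla_u_2} while climbing one derivative higher. Starting from $\tfrac12 \ddt \intom |\Delta u|^2 = \intom \Delta u \cdot \Delta u_t$ and integrating by parts---the boundary contribution $\int_{\partial \Omega} \Delta u \, \partial_\nu u_t$ vanishes because $\partial_\nu u \equiv 0$ forces $\partial_\nu u_t \equiv 0$ on $\partial \Omega$---I obtain $\tfrac12 \ddt \intom |\Delta u|^2 = -\intom \nabla \Delta u \cdot \nabla u_t$. Substituting the first equation of~\eqref{prob} then yields
\begin{align*}
        \tfrac12 \ddt \intom |\Delta u|^2 + D_1 \intom |\nabla \Delta u|^2
  =     \chi_1 \intom \nabla \Delta u \cdot \nabla\bigl(\nabla \cdot (u \nabla v)\bigr)
        - \intom \nabla \Delta u \cdot \nabla f(u, v).
\end{align*}

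Each of the two right-hand integrals is split into a leading-order piece at $(\ustar, \vstar)$ and a remainder. Using $\nabla \cdot (u \nabla v) = \nabla u \cdot \nabla v + u \Delta v$ together with the Leibniz rule and writing $u = \ustar + (u - \ustar)$, the taxis contribution decomposes into the principal term $\chi_1 \ustar \intom \nabla \Delta u \cdot \nabla \Delta v$ plus the remainders $\chi_1 \intom (u - \ustar) \nabla \Delta u \cdot \nabla \Delta v$, $\chi_1 \intom \Delta v \, \nabla \Delta u \cdot \nabla u$ and $\chi_1 \intom \nabla \Delta u \cdot \bigl((D^2 u)\nabla v + (D^2 v)\nabla u\bigr)$. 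For the reaction term I exploit $f_v = a_1 u$ together with the Neumann identities $\intom \nabla \Delta u \cdot \nabla u = -\intom |\Delta u|^2$ and $\intom \nabla \Delta u \cdot \nabla v = -\intom \Delta u \Delta v$ to extract the second principal contribution $a_1 \ustar \intom \Delta u \Delta v$, along with the additional nonpositive term $f_u(\ustar, \vstar) \intom |\Delta u|^2$---harmless by~\eqref{eq:fu_gv_ge_0}, so it can simply be dropped---and further remainders proportional to $f_u(u,v) - f_u(\ustar, \vstar)$, which is of size $O(\eta)$ in $\leb\infty$ by the mean value theorem.

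Every remainder is then absorbed into the dissipation $\frac{C_1 D_1}{2} \intom |\nabla \Delta u|^2$ (together with its $v$-analogue) by combining Young's inequality, Poincaré's inequality~\ref{lm:poincare} (applicable to $\Delta u$ and $\Delta v$ because $\intom \Delta u = \int_{\partial \Omega} \partial_\nu u = 0$ and likewise for $v$), the smallness $\|u - \ustar\|_{\leb\infty} + \|v - \vstar\|_{\leb\infty} \lt \eta$ on $(0, T_\eta)$, and the Gagliardo--Nirenberg-type inequalities from the appendix. Performing the analogous computation for the $v$-equation, scaling the two resulting inequalities by $C_1$ and $C_2$, respectively, and adding them then yields the claim.

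The main technical obstacle I expect is handling the quartic-in-derivative remainders $\chi_1 \intom \Delta v \, \nabla \Delta u \cdot \nabla u$ and $\chi_1 \intom \nabla \Delta u \cdot \bigl((D^2 u) \nabla v + (D^2 v) \nabla u\bigr)$, which couple a third-order derivative of one component with a Hessian or Laplacian and a gradient of the other. Precisely for such terms the refined Gagliardo--Nirenberg inequalities collected in the appendix are indispensable: by bounding the relevant $\leb4$-norms purely in terms of $\|\Delta \varphi\|_{\leb2}$ and $\|\nabla \Delta \varphi\|_{\leb2}$ rather than $\|D^2 \varphi\|_{\leb2}$ and $\|D^3 \varphi\|_{\leb2}$, they match the dissipative structure available on the left-hand side and thereby permit a clean absorption once $\eta$ is chosen sufficiently small.
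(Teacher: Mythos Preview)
Your overall architecture matches the paper's proof: start from $\tfrac12\ddt\intom|\Delta u|^2=-\intom\nabla\Delta u\cdot\nabla u_t$, expand $\nabla(\nabla\cdot(u\nabla v))$ and $\nabla f(u,v)$, isolate the two principal terms $\chi_1\ustar\intom\nabla\Delta u\cdot\nabla\Delta v$ and $a_1\ustar\intom\Delta u\Delta v$, and absorb the rest via smallness in $\eta$. Your treatment of the reaction term is in fact a legitimate variant that is slightly cleaner than the paper's: by splitting $f_u=f_u(\ustar,\vstar)+\bigl(f_u(u,v)-f_u(\ustar,\vstar)\bigr)$ and $f_v=a_1\ustar+a_1(u-\ustar)$ \emph{before} integrating by parts, you avoid the second integration by parts the paper performs and hence never produce the $\intom|\nabla u|^2\Delta u$ and $\intom\nabla u\cdot\nabla v\,\Delta u$ remainders (and with them the need for Lemma~\ref{lm:gni_sob_22} at this stage). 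Your remainders are then controlled directly by $\eta\|\nabla\Delta u\|_{\leb2}\|\nabla\Delta\cdot\|_{\leb2}$ via Cauchy--Schwarz and Poincar\'e.

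Where your plan needs correction is the taxis remainders $\chi_1\intom\Delta v\,\nabla\Delta u\cdot\nabla u$ and $\chi_1\intom\nabla\Delta u\cdot\bigl((D^2 u)\nabla v+(D^2 v)\nabla u\bigr)$. An $L^4$ splitting does \emph{not} close here: after Young's inequality one faces $\intom|D^2 u|^2|\nabla v|^2$, and estimating this by $\|D^2 u\|_{\leb4}^2\|\nabla v\|_{\leb4}^2$ together with Lemma~\ref{lm:gni_sob_22} yields only $\|\nabla v\|_{\leb4}^2\lesssim\eta\|\Delta v\|_{\leb2}$, while $\|D^2 u\|_{\leb4}^2$ (via Lemma~\ref{lm:w22_delta_2} and the embedding $\sob12\hookrightarrow\leb4$) is merely $\lesssim\|\nabla\Delta u\|_{\leb2}^2$ with no $\eta$-factor. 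The resulting bound $\eta\|\nabla\Delta u\|_{\leb2}^2\|\nabla\Delta v\|_{\leb2}$ is cubic in the dissipation norms and cannot be absorbed. The paper instead applies Young's inequality with exponents $\tfrac32$ and $3$,
\[
  |D^2 u|^2|\nabla v|^2\le\tfrac23|D^2 u|^3+\tfrac13|\nabla v|^6,
\]
and then invokes Lemma~\ref{lm:gni_sob_32} (together with Lemma~\ref{lm:w22_delta_2} to pass from $\Delta\varphi$ to $D^2\varphi$), which gives
\[
  \intom|D^2\varphi|^3\lesssim\|\varphi-\ol\varphi\|_{\leb\infty}\intom|\nabla\Delta\varphi|^2
  \quad\text{and}\quad
  \intom|\nabla\varphi|^6\lesssim\|\varphi-\ol\varphi\|_{\leb\infty}^4\intom|\nabla\Delta\varphi|^2.
\]
Both estimates are quadratic in $\|\nabla\Delta\cdot\|_{\leb2}$ and carry explicit powers of $\eta$, so they are directly absorbable. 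Replace your $L^4$ step by this $L^3/L^6$ route and the argument goes through.
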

\begin{proof}
  Fix $C_1, C_2 \gt 0$.
  Let us again begin by fixing some constants:
  By Lemma~\ref{lm:gni_sob_32} and Lemma~\ref{lm:w22_delta_2}, there is $c_1 \gt 0$ such that
  \begin{align}\label{eq:ddt_delta_u_2:gni_1}
    6 \max\left\{\frac{\chi_1^2}{D_1}, \frac{\chi_2^2}{D_2}\right\}
      \intom |\nabla \varphi|^6 &\le c_1 \|\varphi - \ol \varphi\|_{\leb\infty}^4 \intom |\nabla \Delta \varphi|^2
    \qquad\text{for all $\varphi \in \con3$ with $\partial_\nu \varphi = 0$ on $\partial \Omega$}
  \intertext{as well as}\label{eq:ddt_delta_u_2:gni_2}
    12 \max\left\{\frac{\chi_1^2}{D_1}, \frac{\chi_2^2}{D_2}\right\}
      \intom |D^2 \varphi|^3 &\le c_1 \|\varphi - \ol \varphi\|_{\leb\infty} \intom |\nabla \Delta \varphi|^2
    \qquad\text{for all $\varphi \in \con3$ with $\partial_\nu \varphi = 0$ on $\partial \Omega$}
  \end{align}
  and Lemma~\ref{lm:gni_sob_22} provides us with $c_2 \ge 1$ such that
  \begin{align}\label{eq:ddt_delta_u_2:gni_3}
        \intom |\nabla \varphi|^4
    \le c_2 \|\varphi - \ol \varphi\|_{\leb \infty}^2 \intom |\Delta \varphi|^2
    \qquad \text{for all $\varphi \in \con2$ with $\partial_\nu \varphi = 0$ on $\partial \Omega$}.
  \end{align}
  Fix furthermore $\cp$ as in Lemma~\ref{lm:poincare} and choose $\eta \gt 0$ so small that
  \begin{align*}
            M_1(\eta)
    &\defs  \frac{C_1 \eta \chi_1}{2} + \frac{C_2 \eta \chi_2}{2}
            + (C_1 + C_2) c_1 (2\eta + 16\eta^4)
            + \frac{C_1 \cp c_2 \eta (9a_1 + 14\mu_1)}{2}
            + \frac{5C_2 \cp a_2 c_2 \eta}{2}
    \intertext{and}
            M_2(\eta)
    &\defs  \frac{C_1 \eta \chi_1}{2} + \frac{C_2 \eta \chi_2}{2}
            + (C_1 + C_2) c_1 (2\eta + 16\eta^4)
            + \frac{C_2 \cp c_2 \eta (9a_2 + 14\mu_2)}{2}
            + \frac{5C_1 \cp a_1 c_2 \eta}{2}
  \end{align*}
  satisfy
  \begin{align}\label{eq:ddt_delta_u_2:mi_small}
    M_1(\eta) \lt \frac{C_1 D_1}{4} 
    \quad \text{and} \quad
    M_2(\eta) \lt \frac{C_2 D_1}{4}.
  \end{align}
  
  Fix also $u_0, v_0$ complying with \eqref{eq:sss:cond_init}.
  Since $\partial_\nu u = 0$ on $\partial \Omega \times (0, \tmax)$
  implies $(\partial_\nu u)_t = 0$ on $\partial \Omega \times (0, \tmax)$
  and as $|\Delta \varphi| \le \sqrt n |D^2 \varphi|$ for all $\varphi \in C^2(\Ombar)$,
  we may calculate
  \begin{align}\label{eq:ddt_delta_u_2:ddt}
    &\pe  \frac12 \ddt \intom |\Delta u|^2 \notag \\
    &=    - \intom \nabla u_t \cdot \nabla \Delta u + \int_{\partial \Omega} (\partial_\nu u)_t \Delta u \notag \\
    &=    - D_1 \intom |\nabla \Delta u|^2
          + \chi_1 \intom \nabla (u \Delta v + \nabla u \cdot \nabla v) \cdot \nabla \Delta u
          - \intom \nabla (f(u, v)) \cdot \nabla \Delta u \notag \\
    &\le  - D_1 \intom |\nabla \Delta u|^2
          - \intom \nabla (f(u, v)) \cdot \nabla \Delta u \notag \\
    &\pe  + \chi_1 \intom u \nabla \Delta u \cdot \nabla \Delta v
          + \chi_1 \intom (|D^2 u| |\nabla v| + (1 + \sqrt n) |D^2 v| |\nabla u|) |\nabla \Delta u|
    \qquad \text{in $(0, \tmax)$}.
  \end{align}

  Therein is by Young's inequality
  \begin{align*}
        \chi_1 \intom u \nabla \Delta v \cdot \nabla \Delta u
    &=  \chi_1 \ustar \intom \nabla \Delta v \cdot \nabla \Delta u
        + \chi_1 \intom (u - \ustar) \nabla \Delta v \cdot \nabla \Delta u \\
    &=  \chi_1 \ustar \intom \nabla \Delta v \cdot \nabla \Delta u
        + \frac{\eta \chi_1}{2} \intom |\nabla \Delta u|^2
        + \frac{\eta \chi_1}{2} \intom |\nabla \Delta v|^2
    \qquad \text{in $(0, T_\eta)$}.
  \end{align*}
  
  Again by Young's inequality
  combined with $\sqrt n \le 2$, \eqref{eq:ddt_delta_u_2:gni_1}, \eqref{eq:ddt_delta_u_2:gni_2}, \eqref{eq:u_ol_u_eta} and \eqref{eq:v_ol_v_eta},
  we further estimate
  \begin{align*}
    &\pe  \chi_1 \intom \left( |D^2 u| |\nabla v| + (1 + \sqrt n) |D^2 v| |\nabla u| \right) |\nabla \Delta u| \\
    &\le  \frac{D_1}{4} \intom |\nabla \Delta u|^2
          + \frac{2\chi_1^2}{D_1} \intom |D^2 u|^2 |\nabla v|^2
          + \frac{18\chi_1^2}{D_1} \intom |D^2 v|^2 |\nabla u|^2 \\
    &\le  \frac{D_1}{4} \intom |\nabla \Delta u|^2
          + \frac{4 \chi_1^2}{3 D_1} \intom |D^2 u|^3
          + \frac{2 \chi_1^2}{3 D_1} \intom |\nabla v|^6
          + \frac{12 \chi_1^2}{D_1} \intom |D^2 v|^3
          + \frac{6 \chi_1^2}{D_1} \intom |\nabla u|^6\\
    &\le  \left(\frac{D_1}{4} + 2 c_1 \eta + 16 c_1 \eta^4 \right) \intom |\nabla \Delta u|^2
          + \left(2 c_1 \eta + 16 c_1 \eta^4 \right) \intom |\nabla \Delta v|^2
    \qquad \text{in $(0, T_\eta)$}.
  \end{align*}
  (We note that we estimated $\sqrt n \le 2$ only to keep the expressions as simple as possible.
  After possibly enlarging certain constants, the same estimates also holds in the higher dimensional settings;
  that is, no dimension restriction is imposed here.)

  Regarding the remaining term in \eqref{eq:ddt_delta_u_2:ddt},
  we first note that 
  \begin{align*}
    D^2 f(u, v) =
    \begin{pmatrix}
      - 2 \mu_1 & a_1 \\
      a_1       & 0
    \end{pmatrix}
    \qquad \text{in $(0, \tmax)$}
  \end{align*}
  and that \eqref{eq:fu_gv_ge_0} implies
  \begin{align*}
          f_u(u, v) 
    &=    f_u(u, \vstar) + a_1 (v - \vstar) \\
    &=    f_u(\ustar, \vstar) + f_{uu}(\ustar, \vstar) (u - \ustar) + a_1 (v - \vstar) \\
    &\le  -2\mu_1 (u - \ustar) + a_1 (v - \vstar)
    \qquad \text{in $(0, \tmax)$}.
  \end{align*}
  Therefore, an integration by parts and applications of  Young's inequality as well as Poincar\'e's inequality~\ref{lm:poincare} yield
  \begin{align*}
    &\pe  - \intom \nabla(f(u, v)) \cdot \nabla \Delta u \\
    &=    - \intom f_u(u, v) \nabla u \cdot \nabla \Delta u
          - \intom f_v(u, v) \nabla v \cdot \nabla \Delta u \\
    &=    \intom f_u(u, v) |\Delta u|^2
          + \intom f_{uu}(u, v) |\nabla u|^2 \Delta u
          + 2\intom f_{uv}(u, v) \nabla u \cdot \nabla v \Delta u \\
    &\pe  + \intom f_v(u, v) \Delta u \Delta v
          + \intom f_{vv}(u, v) |\nabla v|^2 \Delta u \\
    &\le  \eta (a_1 + 2\mu_1) \intom |\Delta u|^2
          + 2 \mu_1 \intom |\nabla u|^2 |\Delta u|
          + 2 a_1 \intom \nabla u \cdot \nabla v \Delta u \\
    &\pe  + a_1 \intom (u - \ustar) \Delta u \Delta v
          + a_1 \ustar \intom \Delta u \Delta v \\
    %\intertext{}
    &\le  \cp \eta (a_1 + 2\mu_1) \intom |\nabla \Delta u|^2
          + a_1 \ustar \intom \Delta u \Delta v \\
    &\pe  + \eta \mu_1 \intom |\Delta u|^2 + \frac{\mu_1}{\eta} \intom |\nabla u|^4 \\
    &\pe  + a_1 \eta \intom |\Delta u|^2 + \frac{a_1}{2\eta} \intom |\nabla u|^4 + \frac{a_1}{2\eta} \intom |\nabla v|^4 \\
    &\pe  + \frac{a_1 \eta}{2} \intom |\Delta u|^2 + \frac{a_1 \eta}{2} \intom |\Delta v|^2 \\
    &\le  \frac{\cp \eta (5 a_1 + 6 \mu_1)}{2} \intom |\nabla \Delta u|^2
          + \frac{\cp a_1 \eta}{2} \intom |\nabla \Delta v|^2
          + a_1 \ustar \intom \Delta u \Delta v \\
    &\pe  + \frac{2 \mu_1 + a_1}{2\eta} \intom |\nabla u|^4
          + \frac{a_1}{2\eta} \intom |\nabla v|^4
    \qquad\text{in $(0, T_\eta$)}.
  \end{align*}
  Herein we make use of \eqref{eq:ddt_delta_u_2:gni_3}, \eqref{eq:u_ol_u_eta}
  and Poincar\'e's inequality~\ref{lm:poincare}
  to further conclude
  \begin{align*}
        \intom |\nabla u|^4
    \le c_2 \|u - \ol u\|_{\leb \infty}^2 \intom |\Delta u|^2
    \le 4 \cp c_2 \eta^2 \intom |\nabla \Delta u|^2
    \qquad \text{in $(0, T_\eta)$}
  \end{align*}
  and, likewise, now using \eqref{eq:v_ol_v_eta} instead of \eqref{eq:u_ol_u_eta},
  \begin{align*}
        \intom |\nabla v|^4
    \le 4 \cp c_2 \eta^2 \intom |\nabla \Delta v|^2
    \qquad \text{in $(0, T_\eta)$}.
  \end{align*}
  Thus, due to $c_2 \ge 1$,
  \begin{align*}
          - \intom \nabla(f(u, v)) \cdot \nabla \Delta u
    &\le  \frac{\cp c_2 \eta (9 a_1 + 14 \mu_1)}{2} \intom |\nabla \Delta u|^2
          + \frac{5 \cp a_1 c_2 \eta}{2} \intom |\nabla \Delta v|^2
          + a_1 \ustar \intom \Delta u \Delta v
  \end{align*}
  holds in $(0, T_\eta)$.

  As usual, we now combine the estimates above with analogous computations for $v$ to obtain
  \begin{align*}
    &\pe  \ddt \left( \frac{C_1}{2} \intom |\Delta u|^2 + \frac{C_2}{2} \intom |\Delta v|^2 \right) \\
    &\pe  + \left( \frac{3C_1 D_1}{4} - M_1(\eta) \right) \intom |\nabla \Delta u|^2
          + \left( \frac{3C_2 D_2}{4} - M_2(\eta) \right) \intom |\nabla \Delta v|^2 \\
    &\le  (C_1 a_1 \ustar - C_2 a_2 \vstar) \intom \Delta u \Delta v
          + (C_1 \chi_1 \ustar - C_2 \chi_2 \vstar) \intom \nabla \Delta u \cdot \nabla \Delta v
    \qquad \text{in $(0, T_\eta)$},
  \end{align*}
  which in virtue of~\eqref{eq:ddt_delta_u_2:mi_small} implies the statement.
\end{proof}

\section{Deriving \tops{$\sob22$}{W22} bounds for \tops{$u$}{u} and \tops{$v$}{v}}\label{sec:w22_est}
In this section, we will make use of the estimates gained in the previous section
to finally obtain $\sob22$~bounds for both solution components.
That is, we will aim to bound $\|u - \ustar\|_{\sob22} + \|v - \vstar\|_{\sob22}$ by, say, $\frac{\eta}{2}$ in $(0, T_\eta)$
(for a certain $\eta \gt 0$),
as then $T_\eta = \tmax = \infty$ can be concluded---%
provided $T_\eta \gt 0$ which in turn can be achieved by requiring $\|u_0 - \ustar\|_{\sob22} + \|v_0 - \vstar\|_{\sob22}$ to be sufficiently small.

In the sequel, we distinguish between multiple cases.
More concretely, we will handle
\begin{itemize}
  \item \eqref{eq:sss_h1} in Lemma~\ref{lm:conv_h1}, 
  \item \eqref{eq:sss_h2} with $\lambda_2 \mu_1 \gt \lambda_1 a_2$ in Lemma~\ref{lm:conv_h2_pos_pos}, 
  \item \eqref{eq:sss_h2} with $\lambda_2 \mu_1 \lt \lambda_1 a_2$
        in Lemma~\ref{lm:ddt_delta_u_2_h2_pos_zero} and Lemma~\ref{lm:conv_h2_pos_zero}
  \item \eqref{eq:sss_h2} with $\lambda_2 \mu_1 = \lambda_1 a_2$ and $\lambda_1 \gt 0$
        in Lemma~\ref{lm:conv_h2_zero_zero_1_v}~(ii) and Lemma~\ref{lm:conv_h2_zero_zero_1_u}
        as well as
  \item \eqref{eq:sss_h2} with $\lambda_1 = \lambda_2 = 0$
        in Lemma~\ref{lm:conv_h2_zero_zero_2}.
\end{itemize}
These five cases can be divided into two groups, the first of which we deal with in the following subsection.

\subsection{The cases \tops{\eqref{eq:sss_h1}}{(H1)} and \tops{\eqref{eq:sss_h2}}{H2} with \tops{$\lambda_2 \mu_1 \gt \lambda_1 a_2$}{lambda2 mu1 > lambda1 a2}}
If either \eqref{eq:sss_h1} holds with $m_1, m_2 \gt 0$ or \eqref{eq:sss_h2} holds with $\lambda_2 \mu_1 \gt \lambda_1 a_2$,
$\ustar$ and $\vstar$ are positive---%
which is the reason these cases can be handled in a similar fashion.
In both cases, we will aim to apply the following elementary lemma.

\begin{lemma}\label{lm:phi_abc}
  For $A, B, C \gt 0$ and $\varphi \in \sob22$ set
  \begin{align}\label{eq:phi_abc:def_phi}
          \phi_{A, B, C}(\varphi)
   \defs  \frac{A}{2} \intom \varphi^2
          + \frac{B}{2} \intom |\nabla \varphi|^2
          + \frac{C}{2} \intom |\Delta \varphi|^2
  \end{align}
  and let $A_1, A_2, B_1, B_2, C_1, C_2 \gt 0$, $\eta \gt 0$ and $K_2 \gt 0$.

  There is $K_1 \gt 0$ such that,
  if $u_0, v_0$ comply with \eqref{eq:sss:cond_init}, $T_\eta$ is as in \eqref{eq:def_t_eta} and
  \begin{align}\label{eq:phi_abc:def_y}
    y \colon [0, T_\eta) \ra \R,
    \quad t \mapsto \phi_{A_1, B_1, C_1}(u(\cdot, t) - \ustar) + \phi_{A_2, B_2, C_2}(v(\cdot, t) - \vstar)
  \end{align}
  fulfills
  \begin{align}\label{eq:phi_abc:cond}
    y'(t) \le -2K_y(t)
    \qquad \text{in $(0, T_\eta)$},
  \end{align}
  then
  \begin{align}\label{eq:conv_exp}
        \|u(\cdot, t) - \ustar\|_{\sob22} + \|v(\cdot, t) - \vstar\|_{\sob22} 
    \le K_1 \ure^{-K_2 t} \left( \|u_0 - \ustar\|_{\sob22} + \|v_0 - \vstar\|_{\sob22} \right)
  \end{align}
  for all $t \in (0, T_\eta)$.
\end{lemma}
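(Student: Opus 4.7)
The strategy is to exploit that the functional $\phi_{A,B,C}$ defined in~\eqref{eq:phi_abc:def_phi} is, for each choice of positive $A, B, C$, equivalent to the square of the $\sob22$-norm on the space $\sobn22$. Once this norm equivalence is in hand, the decay estimate for $y$ from~\eqref{eq:phi_abc:cond} translates directly into the asserted exponential bound via Gr\"onwall.

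To make this precise, I would first invoke classical $H^2$-regularity for the Neumann Laplacian on the smooth bounded domain $\Omega$: there are constants $c_-, c_+ \gt 0$ (depending only on $\Omega$) with
\begin{align*}
  c_- \|\varphi\|_{\sob22}^2
  \le \intom \varphi^2 + \intom |\nabla \varphi|^2 + \intom |\Delta \varphi|^2
  \le c_+ \|\varphi\|_{\sob22}^2
  \qquad \text{for all } \varphi \in \sobn22.
\end{align*}
The upper bound is immediate from $|\Delta \varphi|^2 \le n |D^2 \varphi|^2$, while the lower bound is the well-known estimate $\|D^2 \varphi\|_{\leb 2} \le C(\|\Delta \varphi\|_{\leb 2} + \|\varphi\|_{\leb 2})$ valid for $\varphi$ satisfying homogeneous Neumann boundary conditions on a smooth bounded domain. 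Setting $\alpha_- \defs \tfrac12 \min\{A_1, A_2, B_1, B_2, C_1, C_2\}$ and $\alpha_+ \defs \tfrac12 \max\{A_1, A_2, B_1, B_2, C_1, C_2\}$, this yields
\begin{align*}
  \alpha_- c_- \bigl(\|u(\cdot, t) - \ustar\|_{\sob22}^2 + \|v(\cdot, t) - \vstar\|_{\sob22}^2 \bigr)
  \le y(t)
  \le \alpha_+ c_+ \bigl(\|u(\cdot, t) - \ustar\|_{\sob22}^2 + \|v(\cdot, t) - \vstar\|_{\sob22}^2 \bigr)
\end{align*}
for every $t \in [0, T_\eta)$. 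Crucially, this two-sided bound is also valid at $t = 0$, because the continuity property~\eqref{eq:local_ex:cont_sob22} of Lemma~\ref{lm:local_ex} (applicable thanks to the assumption~\eqref{eq:sss:cond_init} on $u_0, v_0$) ensures that $u, v \in C^0([0, T_\eta); \sobn22)$ and hence $y \in C^0([0, T_\eta))$.

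Next, since $y \in C^0([0, T_\eta)) \cap C^1((0, T_\eta))$, the differential inequality~\eqref{eq:phi_abc:cond} gives $\ddt (\ure^{2K_2 t} y(t)) \le 0$ on $(0, T_\eta)$ and hence $y(t) \le y(0) \ure^{-2K_2 t}$ throughout $[0, T_\eta)$. Combining this with the norm equivalence above and the elementary inequalities $\sqrt{a^2 + b^2} \le a + b \le \sqrt{2}\,\sqrt{a^2 + b^2}$ (valid for $a, b \ge 0$) produces~\eqref{eq:conv_exp} with
\begin{align*}
  K_1 \defs \sqrt{\frac{2 \alpha_+ c_+}{\alpha_- c_-}}.
\end{align*}

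Since the argument reduces to an ODE comparison wrapped around a norm equivalence, there is no serious obstacle; the only nontrivial ingredient is the Neumann $H^2$-regularity estimate recalled above, which is classical for smooth bounded domains but worth citing carefully.
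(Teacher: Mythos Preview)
Your argument is correct and follows essentially the same route as the paper's proof: an ODE comparison yielding $y(t)\le \ure^{-2K_2 t}y(0)$ (using the $\sobn22$-continuity at $t=0$ from~\eqref{eq:local_ex:cont_sob22}), combined with the observation that $\sqrt{\phi_{A,B,C}(\cdot)}$ is a norm on $\sobn22$ equivalent to the standard one. The only cosmetic difference is that the paper packages the elliptic regularity input as its Lemma~\ref{lm:w22_delta_2}, whereas you cite the Neumann $H^2$-estimate directly.
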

\begin{proof}
  As $\sob22$ continuity of $u$ and $v$ up to $t=0$ is ensured by~\eqref{eq:local_ex:cont_sob22},
  we may make use of an ODE comparison argument to obtain
  \begin{align*}
          y(t)
    &\le  \ure^{-2K_2 t} y(0)
    \qquad \text{for all $t \in (0, T_\eta)$}.
  \end{align*}
  The statement follows by taking square roots on both sides
  and noting that $\|\varphi\| \defs \sqrt{\phi_{A, B, C}(\varphi)}$ defines for $A, B, C \gt 0$
  a norm on $\sobn22$, which is equivalent to the usual one by Lemma~\ref{lm:w22_delta_2}.
\end{proof}

For both cases covered in this subsection,
we will now choose $A_1, A_2, B_1, B_2, C_1, C_2 \gt 0$ appropriately 
so that Lemma~\ref{lm:phi_abc} is applicable.

\begin{lemma}\label{lm:conv_h1}
  Suppose~\eqref{eq:sss_h1}.
  Then there are $\eta \gt 0$ and $K_1, K_2 \gt 0$ such that~\eqref{eq:conv_exp} holds
  for all $u_0, v_0$ satisfying~\eqref{eq:sss:cond_init}.
\end{lemma}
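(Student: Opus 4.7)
The plan is to apply Lemma~\ref{lm:phi_abc} with a single balanced choice of parameters. Under \eqref{eq:sss_h1} we have $a_1 = a_2 = \mu_1 = \mu_2 = 0$, so $f_u(\ustar, \vstar) = g_v(\ustar, \vstar) = 0$ and, as one reads off the right-hand sides of Lemmas~\ref{lm:ddt_u_2}, \ref{lm:ddt_nabla_u_2} and \ref{lm:ddt_delta_u_2}, every zeroth-order cross-coupling vanishes automatically. Only the three gradient-type coupling terms with prefactors $A_1 \chi_1 \ustar - A_2 \chi_2 \vstar$, $B_1 \chi_1 \ustar - B_2 \chi_2 \vstar$ and $C_1 \chi_1 \ustar - C_2 \chi_2 \vstar$ remain, and these can all be eliminated simultaneously by the single choice $A_1 = B_1 = C_1 \defs \chi_2 \vstar$, $A_2 = B_2 = C_2 \defs \chi_1 \ustar$, provided $\ustar, \vstar > 0$.

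Before invoking this, I would dispose of the degenerate situation in which $m_1 = 0$ or $m_2 = 0$: nonnegativity of $u_0$ (or $v_0$) together with vanishing mass forces $u_0 \equiv 0$ (or $v_0 \equiv 0$), so by uniqueness from Lemma~\ref{lm:local_ex} one solution component is identically zero and the remaining equation reduces to the pure Neumann heat equation, for which exponential $\sob22$-convergence to the mean is classical. Assuming therefore $m_1, m_2 \gt 0$, I would fix $\eta > 0$ small enough for all three preceding lemmas to apply simultaneously, add the resulting three inequalities, and obtain
\begin{align*}
  y'(t) + K \bigl( \tfrac{\chi_2 \vstar D_1}{2} (\|\nabla u\|_{L^2}^2 + \|\Delta u\|_{L^2}^2 + \|\nabla \Delta u\|_{L^2}^2) + \tfrac{\chi_1 \ustar D_2}{2} (\text{same for } v) \bigr) \le 0
\end{align*}
in $(0, T_\eta)$, where $y$ is precisely the functional appearing in \eqref{eq:phi_abc:def_y} with the chosen parameters.

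The crucial next step is to absorb $y(t)$ itself into these dissipative terms via the Poincaré inequality~\ref{lm:poincare}. The decisive observation is \textbf{mass conservation}: integrating the first equation of~\eqref{prob} under \eqref{eq:sss_h1}, the divergence terms and the reaction term both vanish, so $\intom u(\cdot, t) = m_1$ and $\intom v(\cdot, t) = m_2$ throughout $(0, \tmax)$; hence $\ol{u}(t) = \ustar$ and $\ol{v}(t) = \vstar$. This allows three iterated applications of~\ref{lm:poincare}, the second and third using that $\intom \Delta u = \int_{\partial \Omega} \partial_\nu u = 0$, to yield $\intom (u - \ustar)^2 \le \cp \intom |\nabla u|^2 \le \cp^2 \intom |\Delta u|^2 \le \cp^3 \intom |\nabla \Delta u|^2$ and analogously for $v$. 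Combining these chains with the dissipation bound produces a differential inequality of the required form $y'(t) \le -2K_2 y(t)$ on $(0, T_\eta)$, after which Lemma~\ref{lm:phi_abc} directly delivers~\eqref{eq:conv_exp}.

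The main conceptual obstacle, already settled by the discussion above, is the simultaneous elimination of all three cross-coupling terms; the miracle here is that under \eqref{eq:sss_h1} the single proportionality $A_1 : A_2 = \chi_2 \vstar : \chi_1 \ustar$ suffices at every order, so no intricate balancing is needed---this is precisely the simplification that will no longer be available in the subsequent cases under \eqref{eq:sss_h2}, as foreshadowed in the introduction.
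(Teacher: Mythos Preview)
Your proposal is correct and follows essentially the same approach as the paper: disposing of the degenerate mass-zero case, choosing $A_i = B_i = C_i$ with the ratio $\chi_2 \vstar : \chi_1 \ustar$ to annihilate all cross terms, summing the three differential inequalities from Lemmas~\ref{lm:ddt_u_2}--\ref{lm:ddt_delta_u_2}, and then using mass conservation together with Poincar\'e's inequality~\ref{lm:poincare} to close the estimate via Lemma~\ref{lm:phi_abc}. The only cosmetic difference is that the paper retains just the $\nabla\Delta$ dissipation terms in its displayed inequality (dropping the lower-order nonnegative ones), whereas you keep all of them; either is fine.
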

\begin{proof}
  In the case of \eqref{eq:sss_h1} with $m_1 = 0$ or $m_2 = 0$, that is, if at least one of the initial data is trivial,
  the uniqueness statement in Lemma~\ref{lm:local_ex} asserts
  that one solution component is constantly zero while the other solves the heat equation.
  As in that case the statement becomes trivial, we may assume $m_1 \gt 0$ and $m_2 \gt 0$.

  Then $\ustar, \vstar \gt 0$ and hence
  $A_1 = B_1 = C_1 \defs \chi_2 \vstar$ as well as $A_2 = B_2 = C_2 \defs \chi_1 \ustar$ are positive as well.
  Because of
  \begin{align*}
    A_1 \chi_1 \ustar - A_2 \chi_2 \vstar = 0, \quad
    B_1 \chi_1 \ustar - B_2 \chi_2 \vstar = 0, \quad
    C_1 \chi_1 \ustar - C_2 \chi_2 \vstar = 0
  \end{align*}
  and \eqref{eq:sss_h1}, Lemma~\ref{lm:ddt_u_2}, Lemma~\ref{lm:ddt_nabla_u_2} and Lemma~\ref{lm:ddt_delta_u_2} assert
  that there is $\eta \gt 0$ such that
  \begin{align*}
    &\pe  \ddt \Big(
            \phi_{A_1, B_1, C_1}(u(\cdot, t) - \ustar)
            + \phi_{A_2, B_2, C_2}(v(\cdot, t) - \vstar)
          \Big)
          + \frac{C_1 D_1}{2} \intom |\nabla \Delta u|^2
          + \frac{C_2 D_2}{2} \intom |\nabla \Delta v|^2 \\
    &\le  (A_1 \chi_1 \ustar - A_2 \chi_2 \vstar) \intom \nabla u \cdot \nabla v
          + (B_1 \chi_1 \ustar - B_2 \chi_2 \vstar) \intom \Delta u \Delta v
          + (C_1 \chi_1 \ustar - C_2 \chi_2 \vstar) \intom \nabla \Delta u \cdot \nabla \Delta v \\
    &=    0
    \qquad \text{in $(0, T_\eta)$},
  \end{align*}
  whenever $u_0, v_0$ comply with \eqref{eq:sss:cond_init},
  where $\phi$ and $T_\eta$ are as in \eqref{eq:phi_abc:def_phi} and \eqref{eq:def_t_eta}, respectively.

  As integrating the first two equations in~\eqref{prob} implies
  $\ustar = \ol u_0 = \ol u$ and $\vstar = \ol v_0 = \ol v$ in $(0, \tmax)$,
  we further obtain by Poincar\'e's inequality~\ref{lm:poincare}
  that \eqref{eq:phi_abc:cond} is fulfilled for some $K_2 \gt 0$,
  hence the statement follows by Lemma~\ref{lm:phi_abc}.
\end{proof}

Somewhat surprisingly, also in the case \eqref{eq:sss_h2} with $\lambda_2 \mu_1 \gt \lambda_1 a_2$,
suitably choosing $A_1, A_2, B_1, B_2, C_1, C_2$ in Lemma~\ref{lm:ddt_u_2}, Lemma~\ref{lm:ddt_nabla_u_2} and Lemma~\ref{lm:ddt_delta_u_2}
allows for a cancellation of all problematic terms.
\begin{lemma}\label{lm:conv_h2_pos_pos}
  Suppose \eqref{eq:sss_h2} holds and $\lambda_2 \mu_1 \gt \lambda_1 a_2$.
  Then we can find $\eta \gt 0$ and $K_1, K_2 \gt 0$ with the property that \eqref{eq:conv_exp} holds
  whenever $u_0, v_0$ satisfy \eqref{eq:sss:cond_init}.
\end{lemma}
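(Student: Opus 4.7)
The plan is to find positive weights $A_1,A_2,B_1,B_2,C_1,C_2$ such that, when the inequalities of Lemma~\ref{lm:ddt_u_2}, Lemma~\ref{lm:ddt_nabla_u_2} and Lemma~\ref{lm:ddt_delta_u_2} are added with weight pairs $(A_1,A_2)$, $(B_1,B_2)$, $(C_1,C_2)$ respectively, every right-hand side contribution cancels; the claim will then follow by feeding the resulting dissipative inequality into Lemma~\ref{lm:phi_abc}. Under~\eqref{eq:sss_h2} with $\lambda_2\mu_1>\lambda_1 a_2$ we have $\ustar,\vstar>0$ together with $f_u(\ustar,\vstar),g_v(\ustar,\vstar)<0$ (by~\eqref{eq:fu_gv_gt_0}), so for sufficiently small $\eta>0$ the terms $A_1(-f_u(\ustar,\vstar)-\eta(a_1+\mu_1))\intom(u-\ustar)^2$ and $A_2(-g_v(\ustar,\vstar)-\eta(a_2+\mu_2))\intom(v-\vstar)^2$ on the left-hand side of Lemma~\ref{lm:ddt_u_2} genuinely provide dissipation of the $L^2$-norms of $u-\ustar$ and $v-\vstar$.

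After summation, the problematic right-hand side terms take the form $\theta_1\intom(u-\ustar)(v-\vstar)+\theta_2\intom\nabla u\cdot\nabla v+\theta_3\intom\Delta u\Delta v+\theta_4\intom\nabla\Delta u\cdot\nabla\Delta v$, where
\begin{align*}
    \theta_1 &\defs A_1 a_1\ustar-A_2 a_2\vstar, \\
    \theta_2 &\defs (A_1\chi_1\ustar-A_2\chi_2\vstar)+(B_1 a_1\ustar-B_2 a_2\vstar), \\
    \theta_3 &\defs (B_1\chi_1\ustar-B_2\chi_2\vstar)+(C_1 a_1\ustar-C_2 a_2\vstar), \\
    \theta_4 &\defs C_1\chi_1\ustar-C_2\chi_2\vstar.
\end{align*}
Enforcing $\theta_1=\theta_4=0$ fixes the ratios $A_1/A_2=a_2\vstar/(a_1\ustar)$ and $C_1/C_2=\chi_2\vstar/(\chi_1\ustar)$, whereupon $\theta_2=\theta_3=0$ becomes a $2\times 2$ linear system for $(B_1,B_2)$ with coefficient matrix $\bigl(\begin{smallmatrix}a_1\ustar & -a_2\vstar\\ \chi_1\ustar & -\chi_2\vstar\end{smallmatrix}\bigr)$ of determinant $-\ustar\vstar(a_1\chi_2-a_2\chi_1)$. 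Normalising $A_2=C_1=1$, a direct Cramer's rule calculation, valid also in the degenerate case $a_1\chi_2=a_2\chi_1$ where both the determinant and the associated right-hand side vanish, yields the explicit \emph{positive} values $B_1=\chi_2\vstar/(a_1\ustar)+a_2/\chi_2$ and $B_2=a_1\ustar/(\chi_2\vstar)+\chi_1/a_1$, along with $A_1=a_2\vstar/(a_1\ustar)$ and $C_2=\chi_1\ustar/(\chi_2\vstar)$. Hence six positive weights producing complete cancellation exist for every admissible parameter configuration.

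With these coefficients fixed, one picks $\eta>0$ so small that Lemmas~\ref{lm:ddt_u_2}, \ref{lm:ddt_nabla_u_2} and \ref{lm:ddt_delta_u_2} simultaneously apply and that $-f_u(\ustar,\vstar)-\eta(a_1+\mu_1)$ and $-g_v(\ustar,\vstar)-\eta(a_2+\mu_2)$ remain positive, and then adds the three inequalities. By construction the right-hand side vanishes identically, leaving $y'(t)+D(t)\le 0$ in $(0,T_\eta)$, where $y$ is the Lyapunov functional from~\eqref{eq:phi_abc:def_y} and $D(t)$ is a positive linear combination of $\intom(u-\ustar)^2$, $\intom(v-\vstar)^2$, $\intom|\nabla u|^2$, $\intom|\nabla v|^2$, $\intom|\Delta u|^2$, $\intom|\Delta v|^2$ (plus nonnegative $\intom|\nabla\Delta u|^2$ and $\intom|\nabla\Delta v|^2$ contributions which may simply be discarded). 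Since $y$ is itself a linear combination of exactly those first six quantities with fixed positive coefficients, there exists $K_2>0$ with $D(t)\ge 2K_2 y(t)$, so that~\eqref{eq:phi_abc:cond} holds and Lemma~\ref{lm:phi_abc} delivers~\eqref{eq:conv_exp} with a suitable $K_1>0$.

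The main obstacle is purely algebraic: one must verify that the four-equation, six-unknown cancellation system admits a \emph{strictly positive} solution uniformly across all admissible parameters, including the borderline case $a_1\chi_2=a_2\chi_1$ where the underlying $2\times 2$ subsystem is singular. The explicit formulas above perform precisely this verification and realise the ``quite miraculous'' choice of linear combinations alluded to in the introduction; with that in hand, the remainder of the proof is essentially bookkeeping.
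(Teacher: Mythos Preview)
Your proof is correct and follows essentially the same route as the paper: identify positive weights $A_i,B_i,C_i$ so that the four mixed terms $\theta_1,\ldots,\theta_4$ vanish, then feed the resulting purely dissipative differential inequality into Lemma~\ref{lm:phi_abc}. The only cosmetic differences are that the paper picks the somewhat tidier weights $A_1=a_2\vstar$, $A_2=a_1\ustar$, $B_1=(a_2+\chi_2)\vstar$, $B_2=(a_1+\chi_1)\ustar$, $C_1=\chi_2\vstar$, $C_2=\chi_1\ustar$ (another point in the same two-parameter solution family of the cancellation system), and that it closes via Poincar\'e's inequality on the $\intom|\nabla\Delta u|^2$, $\intom|\nabla\Delta v|^2$ terms rather than your direct comparison $D(t)\ge 2K_2 y(t)$; both are equally valid.
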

\begin{proof}
  Positivity of $\ustar$ and $\vstar$ implies that
  the constants
  \begin{align*}
    A_1 \defs a_2 \vstar, \quad
    A_2 \defs a_1 \ustar, \quad
    B_1 \defs (a_2 + \chi_2) \vstar, \quad
    B_2 \defs (a_1 + \chi_1) \ustar, \quad
    C_1 \defs \chi_2 \vstar
    \quad \text{and} \quad
    C_2 \defs \chi_1 \ustar
  \end{align*}
  are all positive,
  hence we may apply Lemma~\ref{lm:ddt_u_2}, Lemma~\ref{lm:ddt_nabla_u_2} and Lemma~\ref{lm:ddt_delta_u_2}
  to obtain $\eta_1 \gt 0$ such that
  \begin{align*}
    &\pe  \ddt \Big(
            \phi_{A_1, B_1, C_1}(u(\cdot, t) - \ustar)
            + \phi_{A_2, B_2, C_2}(v(\cdot, t) - \vstar)
          \Big) \\
    &\pe  + \frac{C_1 D_1}{2} \intom |\nabla \Delta u|^2
          + \frac{C_2 D_2}{2} \intom |\nabla \Delta v|^2 \\
    &\pe  + A_1 \left( -f_{u}(\ustar, \vstar) - \eta (a_1 + \mu_1) \right) \intom (u - \ustar)^2 
          + A_2 \left( -g_{v}(\ustar, \vstar) - \eta (a_2 + \mu_2) \right) \intom (v - \vstar)^2 \\
    &\le  (A_1 a_1 \ustar - A_2 a_2 \vstar) \intom (u - \ustar) (v - \vstar) \\
    &\pe  + [(A_1 \chi_1 + B_1 a_1) \ustar - (A_2 \chi_2 + B_2 a_2) \vstar] \intom \nabla u \cdot \nabla v \\
    &\pe  + [(B_1 \chi_1 + C_1 a_1) \ustar - (B_2 \chi_2 + C_2 a_2) \vstar] \intom \Delta u \Delta v \\
    &\pe  + (C_1 \chi_1 \ustar - C_2 \chi_2 \vstar) \intom \nabla \Delta u \cdot \nabla \Delta v
    \qquad \text{holds in $(0, T_\eta)$ for all $\eta \le \eta_1$},
  \end{align*}
  provided $u_0, v_0$ satisfy \eqref{eq:sss:cond_init},
  where again $\phi$ and $T_\eta$ are defined in \eqref{eq:phi_abc:def_phi} and \eqref{eq:def_t_eta}, respectively.

  Setting further $\eta_2 \defs \min\left\{\frac{-f_u(\ustar, \vstar)}{2(a_1+\mu_1)}, \frac{-g_u(\ustar, \vstar)}{2(a_2+\mu_2)}\right\}$,
  which is positive by \eqref{eq:fu_gv_gt_0},
  and noting that
  \begin{align*}
    A_1 a_1 \ustar - A_2 a_2 \vstar &= 0, \\
    (A_1 \chi_1 + B_1 a_1) \ustar - (A_2 \chi_2 + B_2 a_2) \vstar &= 0, \\
    (B_1 \chi_1 + C_1 a_1) \ustar - (B_2 \chi_2 + C_2 a_2) \vstar &= 0 \quad \text{as well as} \\
    C_1 \chi_1 \ustar - C_2 \chi_2 \vstar &= 0,
  \end{align*}
  we obtain
  \begin{align*}
    &\pe  \ddt \Big(
            \phi_{A_1, B_1, C_1}(u(\cdot, t) - \ustar)
            + \phi_{A_2, B_2, C_2}(v(\cdot, t) - \vstar)
          \Big) \\
    &\pe  + \frac{C_1 D_1}{2} \intom |\nabla \Delta u|^2
          + \frac{C_2 D_2}{2} \intom |\nabla \Delta v|^2 \\
    &\pe  - \frac{A_1 f_{u}(\ustar, \vstar)}{2} \intom (u - \ustar)^2 
          - \frac{A_2 g_{v}(\ustar, \vstar)}{2} \intom (v - \vstar)^2 \\ &\le  0
    \qquad \text{in $(0, T_\eta)$}
  \end{align*}
  for $\eta \defs \min\{\eta_1, \eta_2\}$, provided $u_0, v_0$ comply with \eqref{eq:sss:cond_init}.
  
  In virtue of Poincar\'e's inequality~\ref{lm:poincare},
  this first asserts \eqref{eq:phi_abc:cond} for some $K_2 \gt 0$
  and then also \eqref{eq:conv_exp} for some $K_1 \gt 0$ by Lemma~\ref{lm:phi_abc}.
\end{proof}

\subsection{The case \tops{\eqref{eq:sss_h2}}{(H2)} with \tops{$\lambda_2 \mu_1 \le \lambda_1 a_2$}{lambda2 mu1 <= lambda1 a2}}\label{sec:h2_le}
The condition~\eqref{eq:sss_h2} with $\lambda_2 \mu_1 \le \lambda_1 a_2$ implies $\vstar = 0$,
hence for any choice of $A_1, A_2, B_1, B_2, C_1, C_2 \gt 0$
in Lemma~\ref{lm:ddt_u_2}, Lemma~\ref{lm:ddt_nabla_u_2} and Lemma~\ref{lm:ddt_delta_u_2},
unlike as in the previous subsection, no cancellation of problematic terms occurs
(except if also $\ustar = 0$, but then we will rely on a different functional, see Lemma~\ref{lm:conv_h2_zero_zero_2} below).

However, the disappearance of $\vstar$ can also be used to our advantage.
As the coefficients of the problematic terms no longer depend on $A_2, B_2$ and $C_2$,
we can choose (one of) these parameters comparatively large and thus obtain stronger dissipative terms.
This idea first manifests itself in the following

\begin{lemma}\label{lm:ddt_delta_u_2_h2_pos_zero}
  Suppose \eqref{eq:sss_h2} holds and $\lambda_2 \mu_1 \le \lambda_1 a_2$.
  There are $\eta \gt 0$ as well as $K \gt 0$ and $C_2 \gt 0$ such that
  whenever $u_0, v_0$ comply with \eqref{eq:sss:cond_init} and $T_\eta$ is as in \eqref{eq:def_t_eta},
  \begin{align*}%\label{eq:ddt_delta_u_2_h2_pos_zero:statement}
          \intom |\Delta u(\cdot, t)|^2 + C_2 \intom |\Delta v(\cdot, t)|^2
    &\le  \ure^{-K t} \left( \intom |\Delta u_0|^2 + C_2 \intom |\Delta v_0|^2 \right)
  \qquad \text{for all $t \in (0, T_\eta)$}.
  \end{align*}
\end{lemma}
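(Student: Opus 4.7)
The plan is to apply Lemma~\ref{lm:ddt_delta_u_2} with $C_1 \defs 1$ and $C_2 > 0$ to be chosen sufficiently large. Since~\eqref{eq:sss_h2} together with $\lambda_2 \mu_1 \le \lambda_1 a_2$ forces $\vstar = 0$, the two coupling terms on the right-hand side of that lemma collapse to
\begin{align*}
  a_1 \ustar \intom \Delta u \Delta v + \chi_1 \ustar \intom \nabla \Delta u \cdot \nabla \Delta v,
\end{align*}
whose coefficients are completely independent of $C_2$. This asymmetry is precisely what I want to exploit: enlarging $C_2$ produces arbitrarily much $|\nabla \Delta v|^2$-dissipation on the left without enlarging anything on the right.

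Next I would bound each cross term by Young's inequality. For the third-order coupling I would write $\chi_1 \ustar \intom \nabla \Delta u \cdot \nabla \Delta v \le \frac{D_1}{4}\intom|\nabla \Delta u|^2 + \frac{\chi_1^2 \ustar^2}{D_1}\intom|\nabla \Delta v|^2$ and pick $C_2$ large enough that the second summand is absorbed into, say, half of the $\frac{C_2 D_2}{2}\intom|\nabla \Delta v|^2$-dissipation. For the second-order coupling I would split $a_1\ustar \intom \Delta u \Delta v \le \frac{D_1}{8\cp}\intom|\Delta u|^2 + \frac{2 \cp a_1^2 \ustar^2}{D_1}\intom|\Delta v|^2$, where $\cp$ is the Poincaré constant from Lemma~\ref{lm:poincare}. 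The main---though admittedly mild---obstacle is then that the dissipation controls $|\nabla \Delta u|^2$ and $|\nabla \Delta v|^2$, whereas I need control of $|\Delta u|^2$ and $|\Delta v|^2$ themselves in order to produce a Grönwall-type ODE. Here I would use that the Neumann boundary conditions together with integration by parts force $\intom \Delta u = \intom \Delta v = 0$, so Poincaré's inequality~\ref{lm:poincare} yields $\intom|\Delta u|^2 \le \cp \intom|\nabla \Delta u|^2$ and similarly for $v$. A fraction of the remaining $|\nabla \Delta u|^2$-dissipation then swallows the $|\Delta u|^2$-contribution above, and a further enlargement of $C_2$ ensures that a fraction of the remaining $|\nabla \Delta v|^2$-dissipation swallows the $|\Delta v|^2$-contribution.

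After these absorptions I am left with an inequality of the form $\psi'(t) \le -K \psi(t)$ on $(0, T_\eta)$, for some $K > 0$ and sufficiently small $\eta > 0$, where $\psi(t) \defs \frac{1}{2}\intom|\Delta u(\cdot,t)|^2 + \frac{C_2}{2}\intom|\Delta v(\cdot,t)|^2$. Because Lemma~\ref{lm:local_ex} together with~\eqref{eq:sss:cond_init} ensures $\sob22$-continuity of $(u, v)$ up to $t = 0$, the map $\psi$ is continuous on $[0, T_\eta)$, and a standard ODE comparison argument yields $\psi(t) \le \ure^{-Kt}\psi(0)$ for all $t \in (0, T_\eta)$, which after multiplication by $2$ is exactly the claimed estimate.
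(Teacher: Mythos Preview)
Your proposal is correct and follows essentially the same route as the paper: apply Lemma~\ref{lm:ddt_delta_u_2} with $C_1=1$ and a large $C_2$, exploit $\vstar=0$ so that the cross terms carry only the fixed coefficients $a_1\ustar$ and $\chi_1\ustar$, absorb them via Young's inequality and Poincar\'e's inequality~\ref{lm:poincare}, and close by ODE comparison using the $\sob22$-continuity at $t=0$ from Lemma~\ref{lm:local_ex}. The only cosmetic difference is that the paper fixes $C_2$ explicitly at the outset (so that $\eta$ from Lemma~\ref{lm:ddt_delta_u_2} can be read off immediately), whereas you determine $C_2$ along the way; just make sure in your write-up that $C_2$ is pinned down \emph{before} you invoke Lemma~\ref{lm:ddt_delta_u_2}, since the $\eta$ produced there depends on $C_2$.
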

\begin{proof}
  Set $K \defs \frac{\min\{D_1, D_2\}}{2} \gt 0$,
  $C_1 \defs 1$ and
  \begin{align*}
    C_2 \defs \frac{16 \max\{\cp^2 a_1^2, \chi_1^2\}(\ustar+1)^2}{D_1 D_2} \gt 0,
  \end{align*}
  where $\cp \gt 0$ denotes the constant given in Lemma~\ref{lm:poincare}.

  By Lemma~\ref{lm:ddt_delta_u_2}, there is $\eta \gt 0$ with the property that
  \begin{align*}
    &\pe  \ddt \left( \intom |\Delta u|^2 + C_2 \intom |\Delta v|^2 \right)
          + D_1 \intom |\nabla \Delta u|^2 + C_2 D_2 \intom |\nabla \Delta v|^2 \\
    &\le  2 a_1 \ustar \intom \Delta u \Delta v
          + 2 \chi_1 \ustar \intom \nabla \Delta u \cdot \nabla \Delta v
    \qquad \text{in $(0, T_\eta)$},
  \end{align*}
  provided the (henceforth fixed) initial data $u_0, v_0$ satisfy \eqref{eq:sss:cond_init}.
  
  Therein are by Young's inequality and Poincar\'e's inequality~\ref{lm:poincare}, with $\cp \gt 0$ as in that lemma,
  \begin{align*}
        2 a_1 \ustar \intom \Delta u \Delta v
    &\le \frac{D_1}{4\cp} \intom |\Delta u|^2 + \frac{4 \cp a_1^2  \ustar^2}{D_1} \intom |\Delta v|^2 \\
    &\le \frac{D_1}{4} \intom |\nabla \Delta u|^2 + \frac{C_2 D_2}{4} \intom |\nabla \Delta v|^2
    \qquad \text{in $(0, \tmax)$}
  \end{align*}
  and, again by Young's inequality,
  \begin{align*}
        2 \chi_1 \ustar \intom \nabla \Delta u \cdot \nabla \Delta v
    &\le \frac{D_1}{4} \intom |\nabla \Delta u|^2 + \frac{4 \chi_1^2 \ustar^2}{D_1} \intom |\nabla \Delta v|^2 \\
    &\le \frac{D_1}{4} \intom |\nabla \Delta u|^2 + \frac{C_2 D_2}{4} \intom |\nabla \Delta v|^2
    \qquad \text{in $(0, \tmax)$}.
  \end{align*}
  Thus, the statement follows upon an integration over $(0, T_\eta)$
  due to \eqref{eq:local_ex:cont_sob22}, the $\sob22$ continuity of $u$ and $v$ up to $t=0$.
\end{proof}

In the case \eqref{eq:sss_h2} with $\lambda_2 \mu_1 \lt \lambda_1 a_2$,
by a similar argument we also obtain bounds for $\intom (u - \ustar)^2$ and $\intom v^2$.
\begin{lemma}\label{lm:conv_h2_pos_zero}
  If \eqref{eq:sss_h2} holds with $\lambda_2 \mu_1 \lt \lambda_1 a_2$,
  then there are $\eta \gt 0$, $K \gt 0$ and $A_2 \gt 0$ such that 
  \begin{align*}
          \intom (u - \ustar)^2 + A_2 \intom v^2
    &\le  \ure^{-K t} \left( \intom (u_0 - \ustar)^2 + A_2 \intom (v_0 - \vstar)^2 \right)
  \qquad \text{for all $t \in (0, T_\eta)$}.
  \end{align*}
  provided $u_0, v_0$ satisfy \eqref{eq:sss:cond_init} and $T_\eta$ is as in \eqref{eq:def_t_eta}.
\end{lemma}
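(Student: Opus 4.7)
The plan is to exploit the fact that $\lambda_2 \mu_1 \lt \lambda_1 a_2$ under~\eqref{eq:sss_h2} forces $\vstar = 0$ in~\eqref{eq:sss:def_ustar}, whereas by~\eqref{eq:fu_gv_gt_0} both $-f_u(\ustar, \vstar) \gt 0$ and $-g_v(\ustar, \vstar) \gt 0$ remain strictly positive. We apply Lemma~\ref{lm:ddt_u_2} with $A_1 \defs 1$ and $A_2 \gt 0$ yet to be fixed. Since $\vstar = 0$, the right-hand side of~\eqref{eq:ddt_u_2:statement} collapses to
\[
  a_1 \ustar \intom (u - \ustar) v + \chi_1 \ustar \intom \nabla u \cdot \nabla v,
\]
with both coefficients independent of $A_2$---the key structural feature we will exploit.

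Two applications of Young's inequality split these cross terms: I would bound $a_1 \ustar \intom (u-\ustar) v$ by $\frac{-f_u(\ustar,\vstar)}{4} \intom (u - \ustar)^2$ plus a constant multiple (depending only on $a_1$, $\ustar$ and $f_u(\ustar, \vstar)$) of $\intom v^2$, and analogously bound $\chi_1 \ustar \intom \nabla u \cdot \nabla v$ by $\frac{D_1}{4} \intom |\nabla u|^2$ plus a constant multiple of $\intom |\nabla v|^2$. The $u$-pieces are then absorbed by the corresponding dissipative terms on the left side of~\eqref{eq:ddt_u_2:statement}, whose coefficients do not depend on $A_2$, while the remaining $v$-pieces can be dominated by the dissipative terms $A_2 (-g_v(\ustar,\vstar) - \eta(a_2+\mu_2)) \intom v^2$ and $\frac{A_2 D_2}{2} \intom |\nabla v|^2$ provided $A_2$ is chosen sufficiently large and $\eta$ correspondingly small.

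Dropping the nonnegative leftover dissipative contributions, one arrives at
\[
  \ddt \left( \intom (u-\ustar)^2 + A_2 \intom v^2 \right)
  \le -K \left( \intom (u-\ustar)^2 + A_2 \intom v^2 \right)
  \qquad \text{in $(0, T_\eta)$}
\]
for some $K \gt 0$. Using the $\sob22$ (hence $\leb2$) continuity of $u, v$ up to $t=0$ guaranteed by~\eqref{eq:local_ex:cont_sob22}, an elementary ODE comparison argument then yields the claimed exponential decay in $\leb2$.

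The only real obstacle is the absorption step: it is the strict inequality $\lambda_2 \mu_1 \lt \lambda_1 a_2$ that provides the coercivity $-g_v(\ustar, \vstar) \gt 0$ needed to amplify the $v$-dissipation by taking $A_2$ arbitrarily large while the cross-term constants stay fixed. With only equality, this room would collapse, matching the need for the separate treatment announced in the introduction for the degenerate case $\lambda_2 \mu_1 = \lambda_1 a_2$.
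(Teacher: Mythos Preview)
Your proposal is correct and follows essentially the same route as the paper: apply Lemma~\ref{lm:ddt_u_2} with $A_1=1$, use that $\vstar=0$ makes the cross-term coefficients independent of $A_2$, split both cross terms by Young's inequality, and then choose $A_2$ large (and $\eta$ small) so that the resulting $v$-pieces are absorbed by the now-amplified $v$-dissipation, after which an ODE comparison finishes. The paper carries out exactly this argument with the explicit choice $A_2 \defs \max\{a_1^2/K^2,\ \chi_1^2/(D_1 D_2)\}\,\ustar^2$, where $K$ is a suitable positive lower bound on $-f_u(\ustar,\vstar)-\eta(a_1+\mu_1)$ and $-g_v(\ustar,\vstar)-\eta(a_2+\mu_2)$.
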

\begin{proof}
  Since $\lambda_2 \mu_1 \lt \lambda_1 a_2$, both $f_u(\ustar, \vstar)$ and $g_v(\ustar, \vstar)$ are negative,
  hence there is $\eta_1 \gt 0$ such that 
  \begin{align*}
    K \defs \min\left\{-f_{u}(\ustar, \vstar) - \eta_1 (a_1 + \mu_1),  -g_{v}(\ustar, \vstar) - \eta_1 (a_2 + \mu_2)\right\} \gt 0.
  \end{align*} 
  Set moreover $A_1 \defs 1$ and
  \begin{align*}
    A_2 \defs \max\left\{ \frac{a_1^2}{K^2}, \frac{\chi_1^2}{D_1 D_2} \right\} \ustar^2 \gt 0.
  \end{align*}
  Then Lemma~\ref{lm:ddt_u_2} provides us with $\eta \in (0, \eta_1)$ such that
  \begin{align*}
    &\pe  \ddt \left( \intom (u - \ustar)^2 + A_2 \intom (v - \vstar)^2 \right) \notag \\
    &\pe  + D_1 \intom |\nabla u|^2
          + A_2 D_2 \intom |\nabla v|^2 \notag \\
    &\pe  + 2 K \intom (u - \ustar)^2 
          + 2 A_2 K \intom v^2  \notag \\
    &\le  2 a_1 \ustar \intom (u - \ustar) v
          + 2 \chi_1 \ustar \intom \nabla u \cdot \nabla v 
    \qquad \text{in $(0, T_\eta)$},
  \end{align*}
  whenever $u_0, v_0$ comply with \eqref{eq:sss:cond_init}.

  Henceforth fixing such initial data, two applications of Young's inequality give
  \begin{align*}
          2 a_1 \ustar \intom (u - \ustar) v
    &\le  K \intom (u - \ustar)^2 + \frac{a_1^2 \ustar^2}{K} \intom v^2
     \le  K \intom (u - \ustar)^2 + A_2 K \intom v^2
  \intertext{and}
          2 \chi_1 \ustar \intom \nabla u \cdot \nabla v 
    &\le  D_1 \intom |\nabla u|^2 + \frac{\chi_1^2 \ustar^2}{D_1} \intom |\nabla v|^2
     \le  D_1 \intom |\nabla u|^2 + A_2 D_2 \intom |\nabla v|^2
  \end{align*}
  in $(0, \tmax)$, so that the statement follows by the comparison principle for ordinary differential equations.
\end{proof}

The case \eqref{eq:sss_h2} with $\lambda_2 \mu_1 = \lambda_1 a_2$ cannot be handled in a similar fashion
as then $g_v(\ustar, \vstar) = 0$
resulting in the term $A_2 (-g_v(\ustar, \vstar) - \eta(a_2 + \mu_2)) \intom v^2$ in \eqref{eq:ddt_u_2:statement} having an unfavorable sign.
Similarly, if $\lambda_1 = 0$, then $f_u(\ustar, \vstar) = 0$ and $A_1 (-f_{u}(\ustar, \vstar) - \eta (a_1 + \mu_1)) \lt 0$.
Thus, we introduce an additional functional to counter these terms.
\begin{lemma}\label{lm:ddt_u}
  Suppose that $u_0, v_0$ comply with \eqref{eq:sss:cond_init}.
  If $\lambda_1 = 0$, then
  \begin{align}\label{eq:ddt_u:u}
    \ddt \intom u &= -\mu_1 \intom u^2 + a_1 \intom uv
    \qquad \text{in $(0, \tmax)$}
  \intertext{and if~\eqref{eq:sss_h2} holds with $\lambda_2 \mu_1 = \lambda_1 a_2$, then}\label{eq:ddt_u:v}
    \ddt \intom v &= -\mu_2 \intom v^2 - a_2 \intom (u - \ustar) v
    \qquad \text{in $(0, \tmax)$}.
  \end{align}
\end{lemma}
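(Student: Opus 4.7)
The plan is to derive both identities by a direct integration of the corresponding equation in \eqref{prob} over $\Omega$, using the Neumann boundary conditions to eliminate the diffusion and taxis fluxes.

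For the first identity, I would integrate the equation for $u$ over $\Omega$. The divergence theorem, together with $\partial_\nu u = 0$ on $\partial\Omega$, gives $\intom \Delta u = 0$, and similarly $\partial_\nu v = 0$ gives $\intom \nabla\cdot(u\nabla v) = \int_{\partial\Omega} u\,\partial_\nu v = 0$. What remains is the reaction term, so that
\begin{align*}
  \ddt \intom u = \lambda_1 \intom u - \mu_1 \intom u^2 + a_1 \intom uv
  \qquad \text{in $(0, \tmax)$}.
\end{align*}
Specialising to $\lambda_1 = 0$ yields \eqref{eq:ddt_u:u}.

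For the second identity, I would perform the analogous integration of the equation for $v$, obtaining
\begin{align*}
  \ddt \intom v = \lambda_2 \intom v - \mu_2 \intom v^2 - a_2 \intom uv
  \qquad \text{in $(0, \tmax)$}.
\end{align*}
I would then rewrite $-a_2 \intom uv = -a_2\ustar \intom v - a_2 \intom (u-\ustar)v$, so that the non-quadratic terms collapse into $(\lambda_2 - a_2 \ustar)\intom v$. Under \eqref{eq:sss_h2} with $\lambda_2\mu_1 = \lambda_1 a_2$ the definition \eqref{eq:sss:def_ustar} forces $\ustar = \lambda_1/\mu_1$, hence
\begin{align*}
  \lambda_2 - a_2 \ustar = \frac{\lambda_2 \mu_1 - \lambda_1 a_2}{\mu_1} = 0,
\end{align*}
and \eqref{eq:ddt_u:v} follows.

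The regularity guaranteed by Lemma~\ref{lm:local_ex}, in particular $u, v \in C^\infty(\Ombar \times (0, \tmax))$, justifies differentiating under the integral and integrating by parts in the calculations above. There is no genuine obstacle here; the only point deserving attention is the algebraic identification $\lambda_2 - a_2\ustar = 0$, which is precisely the reason why a purely linear functional (rather than a quadratic one) provides useful information in this otherwise degenerate case.
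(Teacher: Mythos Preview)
Your proposal is correct and matches the paper's proof essentially step for step: both integrate the respective equations in \eqref{prob} over $\Omega$, use the no-flux boundary conditions to kill the divergence terms, and then invoke the algebraic identity $\lambda_2 - a_2\ustar = 0$ (the paper writes this as $\ustar = \lambda_2/a_2$) to simplify the reaction term in the $v$-equation.
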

\begin{proof}
  The first statement immediately follows by integrating the first equation in \eqref{prob}.

  Furthermore, the assumptions~\eqref{eq:sss_h2} and $\lambda_2 \mu_1 = \lambda_1 a_2$
  imply $(\ustar, \vstar) = (\frac{\lambda_1}{\mu_1}, 0) = (\frac{\lambda_2}{a_2}, 0)$ and hence
  \begin{align*}
      g(u, v)
    = v (\lambda_2 - \mu_2 v - a_2 u)
    = v (\lambda_2 - \mu_2 v - a_2 \ustar) + a_2 (\ustar - u) v 
    = -\mu_2 v^2 - a_2 (u - \ustar) v
    \qquad \text{in $(0, \tmax)$}.
  \end{align*}
  Thus, the second statement follows also due to integrating.
\end{proof}

With the help of this lemma, we can now handle the remaining case,
namely \eqref{eq:sss_h2} with $\lambda_2 \mu_1 = \lambda_1 a_2$.
The proof is split into three lemmata;
before dealing with the (in some sense) fully degenerate case,
in the following two lemmata, we first handle the half-degenerate case, where at least $\ustar \gt 0$ and $f_u(\ustar, \vstar) \gt 0$.

\begin{lemma}\label{lm:conv_h2_zero_zero_1_v}
  Suppose~\eqref{eq:sss_h2}, $\lambda_2 \mu_1 = \lambda_1 a_2$ as well as $\lambda_1 \gt 0$
  and, for $\eta \gt 0$, let $T_\eta$ be as in \eqref{eq:def_t_eta}.

  \begin{enumerate}
    \item[(i)]
      There are $\eta \gt 0$ and $K_1, K_2 \gt 0$ such that
      \begin{align*}
              \|v(\cdot, t)\|_{\leb1}
        &\le  \left( K_1 \left(\|u_0 - \ustar\|_{\leb1} + \|v_0\|_{\leb1}\right)^{-1} + K_2 t \right)^{-1}
        \qquad \text{for all $t \in (0, T_\eta)$},
      \end{align*}
      whenever $u_0, v_0$ are such that \eqref{eq:sss:cond_init} holds.

    \item[(ii)]
      We can find $\eta' \gt 0$ and $K_1', K_2' \gt 0$ such that
      \begin{align*}
              \|v(\cdot, t)\|_{\sob22}
        &\le  \left( K_1' \left(\|u_0 - \ustar\|_{\sob22} + \|v_0\|_{\sob22}\right)^{-1} + K_2' t \right)^{-1}
        \qquad \text{for all $t \in (0, T_{\eta'})$},
      \end{align*}
      if $u_0, v_0$ comply with \eqref{eq:sss:cond_init}.
  \end{enumerate}
\end{lemma}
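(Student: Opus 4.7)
The plan is to prove (i) via a combined functional that cancels the sign-indefinite cross term $a_2 \intom (u - \ustar) v$, and then to bootstrap (ii) from (i) by an interpolation-type argument exploiting the exponential $\|\Delta v\|_{\leb2}$-decay furnished by Lemma~\ref{lm:ddt_delta_u_2_h2_pos_zero}.

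For (i), the starting point is the integrated ODE of Lemma~\ref{lm:ddt_u}(ii),
\begin{align*}
  \ddt \intom v = -\mu_2 \intom v^2 - a_2 \intom (u - \ustar) v.
\end{align*}
A crude estimate $|\intom (u - \ustar) v| \le \eta \intom v$ using only the smallness afforded by $(0, T_\eta)$ is insufficient to force decay of $\intom v$. Instead, I would combine this identity with (a positive multiple of) Lemma~\ref{lm:ddt_u_2_single} applied to $u - \ustar$: the weighting in
\begin{align*}
  y(t) \defs A \intom (u - \ustar)^2 + \frac{2 A a_1 \ustar}{a_2} \intom v
\end{align*}
is chosen precisely so that the $\intom (u - \ustar) v$ contributions cancel identically. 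The gradient cross term $\intom \nabla u \cdot \nabla v$ is absorbed into the $\intom |\nabla u|^2$-dissipation via Young's inequality; the resulting residual $\intom |\nabla v|^2$-source is handled by $\intom |\nabla v|^2 = -\intom v \Delta v \le \epsilon \intom v^2 + \frac{1}{4\epsilon} \intom (\Delta v)^2$, part being re-absorbed into the $\intom v^2$-dissipation and part into the exponentially decaying $\intom (\Delta v)^2$ from Lemma~\ref{lm:ddt_delta_u_2_h2_pos_zero}. Together with $\intom v^2 \ge (\intom v)^2/|\Omega|$, this should yield
\begin{align*}
  \ddt y \le -c_1 \intom (u - \ustar)^2 - \frac{c_2}{|\Omega|} \left(\intom v\right)^2 + C M_0 \ure^{-Kt},
\end{align*}
with $M_0$ depending only on $\|u_0 - \ustar\|_{\sob22}^2 + \|v_0\|_{\sob22}^2$. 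An ODE comparison with the Bernoulli equation $z'(t) = -\tilde c z(t)^2$ (whose solutions are $z(t) = (1/z(0) + \tilde c t)^{-1}$), perturbed by the exponential remainder, then gives the claimed algebraic bound.

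For (ii), the plan is to bootstrap (i). Since $n \in \{1,2,3\}$, the Sobolev embedding $\sob22 \embed \leb\infty$ provides $\|v\|_{\leb\infty} \le C \|v\|_{\sob22}$, and combining with $\|v\|_{\leb2}^2 \le \|v\|_{\leb1} \|v\|_{\leb\infty}$ yields
\begin{align*}
  \|v\|_{\leb2}^2 \le C \|v\|_{\leb1} \|v\|_{\sob22}.
\end{align*}
Using the norm equivalence $\|v\|_{\sob22}^2 \sim \|v\|_{\leb2}^2 + \|\Delta v\|_{\leb2}^2$ on $\sobn22$ (Lemma~\ref{lm:w22_delta_2}) together with the exponential bound $\|\Delta v\|_{\leb2}^2 \le C \ure^{-Kt}$ from Lemma~\ref{lm:ddt_delta_u_2_h2_pos_zero} gives
\begin{align*}
  \|v\|_{\sob22}^2 \le C \|v\|_{\leb1} \|v\|_{\sob22} + C \ure^{-Kt},
\end{align*}
a quadratic inequality in $\|v\|_{\sob22}$ whose solution is $\|v\|_{\sob22} \le C \|v\|_{\leb1} + \sqrt{C} \ure^{-Kt/2}$. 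Inserting the $1/t$-bound from (i) and observing that the exponential term is eventually dominated by the algebraic one (and in any case is absorbable into the constant $K_1'$ for small $t$) delivers the asserted estimate.

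The main obstacle is the construction of the combined functional in (i): the weighting $\alpha = 2 A a_1 \ustar / a_2$ is dictated uniquely by the requirement that the $\intom (u - \ustar) v$ terms cancel, but this forces us to absorb the gradient interaction $\intom \nabla u \cdot \nabla v$ as well, thereby generating a $\intom |\nabla v|^2$-source that has no natural counterpart in the $v$-dissipation (since $g_v(\ustar, 0) = 0$ in the present degenerate case). The rescue---trading this source against the exponential $\intom (\Delta v)^2$-decay from Lemma~\ref{lm:ddt_delta_u_2_h2_pos_zero}---is delicate but available precisely because we are in case \eqref{eq:sss_h2} with $\lambda_2 \mu_1 \le \lambda_1 a_2$, where that lemma applies.
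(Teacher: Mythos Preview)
Your approach is workable for (ii), the part that actually feeds into the main theorem, but it does not prove (i) in the precise form stated and is more circuitous than the paper's argument in both parts.

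For (i), the paper builds a \emph{three}-term functional
\[
  y(t) = \tfrac{1}{2}\intom(u-\ustar)^2 + \tfrac{A_2}{2}\intom v^2 + X_2\intom v,
  \qquad X_2 = \tfrac{a_1\ustar}{a_2},\quad A_2 = \tfrac{\chi_1^2\ustar^2}{D_1 D_2},
\]
using the \emph{coupled} Lemma~\ref{lm:ddt_u_2} (not just the one-sided Lemma~\ref{lm:ddt_u_2_single}) together with Lemma~\ref{lm:ddt_u}. The extra piece $\tfrac{A_2}{2}\intom v^2$ brings along its own dissipation $\tfrac{A_2 D_2}{2}\intom|\nabla v|^2$, and the choice of $A_2$ is exactly what makes Young's inequality absorb $\chi_1\ustar\intom\nabla u\cdot\nabla v$ into $\tfrac{D_1}{2}\intom|\nabla u|^2 + \tfrac{A_2 D_2}{2}\intom|\nabla v|^2$ with no residue. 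One obtains the clean ODI $y' \le -c\,y^2$ and hence the algebraic bound with the initial data measured purely in $\leb1$. In your two-term functional the leftover $\intom|\nabla v|^2$ forces you to import the exponential $\intom(\Delta v)^2$-decay from Lemma~\ref{lm:ddt_delta_u_2_h2_pos_zero}; that source term carries a constant depending on $\|u_0-\ustar\|_{\sob22}^2+\|v_0\|_{\sob22}^2$, so the bound you end up with is in terms of $\sob22$-norms of the data, not the $\leb1$-norms asserted in (i). (The perturbed Bernoulli comparison you invoke also needs a short argument, not just a one-liner.)

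For (ii), the paper again takes a shorter route: since $\partial_\nu v = 0$, Lemma~\ref{lm:w22_delta_2} gives directly
\[
  \|v\|_{\sob22} \le \|v-\ol v\|_{\sob22} + \|\ol v\|_{\leb2}
  \le C\|\Delta v\|_{\leb2} + |\Omega|^{-1/2}\|v\|_{\leb1},
\]
and one simply adds the exponential bound on $\|\Delta v\|_{\leb2}$ from Lemma~\ref{lm:ddt_delta_u_2_h2_pos_zero} to the algebraic bound from (i). Your quadratic-inequality detour via $\|v\|_{\leb2}^2\le\|v\|_{\leb1}\|v\|_{\leb\infty}$ reaches the same conclusion, so it is correct, just unnecessarily indirect.
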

\begin{proof}
  Setting $A_1 \defs 1$, $X_2 \defs \frac{a_1 \ustar}{a_2} \gt 0$, $A_2 \defs \frac{\chi_1^2 \ustar^2}{D_1 D_2} \gt 0$,
  by Lemma~\ref{lm:ddt_u_2} and Lemma~\ref{lm:ddt_u} we find $\eta_0 \gt 0$ such that
  \begin{align}\label{eq:conv_h2_zero_zero_1:first_ddt}
    &\pe  \ddt \left(
            \frac{A_1}{2} \intom (u - \ustar)^2 + \frac{A_2}{2} \intom v^2
            + X_2 \intom v
          \right) \notag \\
    &\pe  + \frac{A_1 D_1}{2} \intom |\nabla u|^2
          + \frac{A_2 D_2}{2} \intom |\nabla v|^2 \notag \\
    &\pe  + \left( -A_1 f_{u}(\ustar, \vstar) - A_1 \eta (a_1 + \mu_1) \right) \intom (u - \ustar)^2 
          + \left( X_2 \mu_2 - A_2 \eta (a_2 + \mu_2) \right) \intom v^2 \notag \\
    &\le  \left( A_1 a_1 \ustar - X_2 a_2 \right) \intom (u - \ustar) v
          + A_1 \chi_1 \ustar \intom \nabla u \cdot \nabla v 
    \qquad \text{in $(0, T_\eta)$ for all $\eta \le \eta_0$},
  \end{align}
  whenever $u_0, v_0$ comply with \eqref{eq:sss:cond_init}.

  Set $c_1 \defs \frac{A_1 f_u(\ustar, \vstar)}{2} \gt 0$,
  $c_2 \defs \frac{X_2 \mu_2}{2} \gt 0$,
  $c_3 \defs \min\left\{\frac{4c_1}{3A_1^2}, \frac{2c_2}{3A_2^2}, \frac{c_2}{6X_2^2 |\Omega|}\right\} \gt 0$
  as well as
  \begin{align*}
    \eta \defs \min\left\{1, \eta_0, |\Omega|^{-\frac12}, \frac{c_1}{A_1 (a_1 + \mu_1)}, \frac{c_2}{A_2(a_2 + \mu_2)}\right\} \gt 0
  \end{align*}
  and fix $u_0, v_0$ satisfying \eqref{eq:sss:cond_init}.

  As the term $A_1 a_1 \ustar - X_2 a_2$ vanishes due to the definition of $A_1$ and $X_2$,
  and Young's inequality as well as the definition of $A_2$ imply
  \begin{align*}
        A_1 \chi_1 \ustar \intom \nabla u \cdot \nabla v 
    \le \frac{A_1 D_1}{2} \intom |\nabla u|^2 + \frac{A_2 D_2}{2} \intom |\nabla u|^2
    \qquad \text{in $(0, \tmax)$},
  \end{align*}
  we may conclude from \eqref{eq:conv_h2_zero_zero_1:first_ddt} that
  \begin{align*}
        \ddt \left(
           \frac{A_1}{2} \intom (u - \ustar)^2 + \frac{A_2}{2} \intom v^2
           + X_2 \intom v
         \right)
    \le  - c_1 \intom (u - \ustar)^2 - c_2 \intom v^2
    \qquad \text{holds in $(0, T_\eta)$}.
  \end{align*}
  
  Since $\eta \le |\Omega|^{-\frac12}$ implies $\intom (u - \ustar)^2 \le 1$ as well as $\intom v^2 \le 1$ in $(0, T_\eta)$
  and due to Hölder's inequality as well as the elementary inequality $(a+b+c)^2 \le 3(a^2+b^2+c^2)$ for $a, b, c \in \R$,
  we further obtain
  \begin{align*}
    &\pe  \ddt \left(
            \frac{A_1}{2} \intom (u - \ustar)^2 + \frac{A_2}{2} \intom v^2
            + X_2 \intom v
          \right) \\
    &\le  - c_1 \intom (u - \ustar)^2
          - \frac{c_2}{2} \intom v^2
          - \frac{c_2}{2} \intom v^2 \\
    &\le  - c_1 \left( \intom (u - \ustar)^2 \right)^2
          - \frac{c_2}{2} \left( \intom v^2 \right)^2
          - \frac{c_2}{2|\Omega|} \left( \intom v \right)^2 \\
    &\le  - c_3 \left(
            \frac{A_1}{2} \intom (u - \ustar)^2 + \frac{A_2}{2} \intom v^2
            + X_2 \intom v
          \right)^2
    \qquad \text{in $(0, T_{\eta})$}.
  \end{align*}
   
  Because of $\eta \le 1$
  and since without loss of generality $\|u_0 - \ustar\|_{\leb\infty} \le \eta$ and $\|v_0\|_{\leb \infty} \le \eta$,
  this implies
  \begin{align*}
          X_2 \|v(\cdot, t)\|_{\leb 1}
    &\le  \left(  
            \left(
              \frac{A_1}{2} \intom (u_0 - \ustar)^2 + \frac{A_2}{2} \intom v_0^2 + X_2 \intom v_0 \right)^{-1}
              + c_3 t
            \right)^{-1} \\
    &\le  \left(  
            \left(
              \frac{A_1}{2} \intom |u_0 - \ustar| + \left( \frac{A_2}{2} + X_2 \right) \intom v_0 \right)^{-1}
              + c_3 t
            \right)^{-1}
    \qquad \text{for all $t \in (0, T_{\eta})$}
  \end{align*}
  and hence proves part~(i) for certain $K_1, K_2 \gt 0$.

  Part~(ii) follows then from Lemma~\ref{lm:ddt_delta_u_2_h2_pos_zero}, part~(i) and the observation that
  \begin{align*}
        \|v\|_{\sob22}
    \le \|v - \ol v\|_{\sob22} + \|\ol v\|_{\leb 2}
    \le C \|\Delta v\|_{\leb 2} + |\Omega|^{-\frac12} \|v\|_{\leb1}
    \qquad \text{holds in $(0, \tmax)$}
  \end{align*}
  due to Lemma~\ref{lm:w22_delta_2} (with $C \gt 0$ as in that lemma).
\end{proof}

Next, we proceed to gain similar estimates also for the first equation.

\begin{lemma}\label{lm:conv_h2_zero_zero_1_u}
  Assume \eqref{eq:sss_h2} holds and $\lambda_2 \mu_1 = \lambda_1 a_2$ as well as $\lambda_1 \gt 0$.
  Then there are $\eta \gt 0$ and $K_1, K_2 \gt 0$ such that
  \begin{align*}
          \|u(\cdot, t) - \ustar\|_{\sob22}
    &\le  \left( K_1 \left(\|u_0 - \ustar\|_{\sob22} + \|v_0\|_{\sob22}\right)^{-1} + K_2 t \right)^{-1}
    \qquad \text{for all $t \in (0, T_{\eta})$},
  \end{align*}
  if $u_0, v_0$ satisfy \eqref{eq:sss:cond_init} and $T_\eta$ is as in \eqref{eq:def_t_eta}.
\end{lemma}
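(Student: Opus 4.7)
Under \eqref{eq:sss_h2} with $\lambda_2 \mu_1 = \lambda_1 a_2$ and $\lambda_1 \gt 0$ we have $\ustar = \lambda_1/\mu_1 \gt 0$, $\vstar = 0$, $f_u(\ustar, \vstar) = -\lambda_1 \lt 0$ and $g_v(\ustar, \vstar) = 0$, so that the $u$-equation still carries a genuine linearly absorbing zeroth-order term while $v$ only decays algebraically. The overall idea is therefore to treat $v$ as an algebraically decaying forcing in the equation for $u$ and to combine the resulting algebraic control of $\|u - \ustar\|_{\leb 2}$ with the exponential control of $\|\Delta u\|_{\leb 2}$ already provided by Lemma~\ref{lm:ddt_delta_u_2_h2_pos_zero}.

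More concretely, I would start from the single-equation estimate~\eqref{eq:ddt_u_2_single_u} in Lemma~\ref{lm:ddt_u_2_single}. For $\eta \gt 0$ small enough, the coefficient $-f_u(\ustar, \vstar) - \eta(a_1 + \mu_1)$ is bounded below by $\lambda_1/2$, and Young's inequality yields
\begin{align*}
  a_1 \ustar \intom (u - \ustar) v \le \tfrac{\lambda_1}{4} \intom (u - \ustar)^2 + C \intom v^2,
  \qquad
  \chi_1 \ustar \intom \nabla u \cdot \nabla v \le \tfrac{D_1}{4} \intom |\nabla u|^2 + C \intom |\nabla v|^2.
\end{align*}
Dropping the remaining nonnegative gradient term on the left, this reduces to an ODI of the form
\begin{align*}
  y'(t) + c\, y(t) \le C' \|v(\cdot, t)\|_{\sob22}^2
  \qquad \text{in $(0, T_\eta)$},
\end{align*}
where $y(t) \defs \intom (u(\cdot, t) - \ustar)^2$ and $c, C' \gt 0$. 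Lemma~\ref{lm:conv_h2_zero_zero_1_v}(ii) bounds the right-hand side by $\tilde C (K_1' M^{-1} + K_2' t)^{-2}$ with $M \defs \|u_0 - \ustar\|_{\sob22} + \|v_0\|_{\sob22}$.

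The variation-of-constants formula then gives $y(t) \le e^{-ct} M^2 + C' \int_0^t e^{-c(t-s)}(K_1' M^{-1} + K_2' s)^{-2} \ds$. Splitting the integral at $t/2$ and using the pointwise bound $(K_1' M^{-1} + K_2' s)^{-2} \le M^2/K_1'^2$ on $[0, t/2]$ together with monotonicity on $[t/2, t]$, the integral is readily seen to be $\le \hat C (K_1' M^{-1} + K_2' t)^{-2}$. For the homogeneous contribution $e^{-ct} M^2$, the identity $e^{-ct} M^2 (K_1' M^{-1} + K_2' t)^2 = e^{-ct} (K_1' + K_2' M t)^2$ is uniformly bounded in $t \ge 0$ for $M \le 1$, so it enjoys the same bound. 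Taking square roots yields $\|u(\cdot, t) - \ustar\|_{\leb 2} \le \hat C_1 (K_1' M^{-1} + K_2' t)^{-1}$.

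The final step is to promote this to an $\sob22$ bound via Lemma~\ref{lm:w22_delta_2}, which estimates $\|u - \ustar\|_{\sob22}$ by a constant multiple of $\|u - \ustar\|_{\leb 2} + \|\Delta u\|_{\leb 2}$. The second summand is exponentially controlled through Lemma~\ref{lm:ddt_delta_u_2_h2_pos_zero}, giving $\|\Delta u(\cdot, t)\|_{\leb 2} \le \hat C_2 e^{-K t/2} M$, and the same exponential-versus-algebraic comparison $e^{-Kt/2} M (K_1 M^{-1} + K_2 t) \le $ const (for $M$ bounded) shows that this is compatible with the target rate. Choosing $K_1, K_2$ appropriately and shrinking $\eta$ if necessary (so that all of Lemma~\ref{lm:ddt_u_2_single}, Lemma~\ref{lm:conv_h2_zero_zero_1_v} and Lemma~\ref{lm:ddt_delta_u_2_h2_pos_zero} are simultaneously applicable) then delivers the claim. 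The one point requiring care is the last step of the variation-of-constants argument, where the exponentially decaying homogeneous part must be reconciled with the algebraic rate; this is only possible because $M$ is small, which is in turn guaranteed via~\eqref{eq:sss:cond_eps}.
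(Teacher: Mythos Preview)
Your proposal follows essentially the same route as the paper's proof: start from the single-equation estimate~\eqref{eq:ddt_u_2_single_u}, absorb the mixed terms $a_1\ustar\intom(u-\ustar)v$ and $\chi_1\ustar\intom\nabla u\cdot\nabla v$ via Young's inequality, feed in the algebraic bound on $\|v\|_{\sob22}$ from Lemma~\ref{lm:conv_h2_zero_zero_1_v}(ii), integrate by variation-of-constants with the integral split at $t/2$, and finally upgrade to $\sob22$ through Lemma~\ref{lm:w22_delta_2} combined with the exponential Laplacian bound of Lemma~\ref{lm:ddt_delta_u_2_h2_pos_zero}. Your closing remark about needing $M$ bounded to reconcile the exponential homogeneous part with the algebraic target rate is a genuine point which the paper glosses over in its final sentence; note, however, that \eqref{eq:sss:cond_eps} is not among the hypotheses of the present lemma, so you cannot literally invoke it here---this is a shared minor looseness, harmless because the lemma is only ever applied in Lemma~\ref{lm:main_proof} under~\eqref{eq:sss:cond_eps}.
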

\begin{proof}
  Choose $\eta_1 \gt 0$ so small that $c_1 \defs \lambda_1 - (a_1 + \mu_1) \eta_1 \gt 0$
  and set $c_2 \defs \max\left\{\frac{a_1^2 \ustar^2}{c_1}, \frac{2\chi_1^2 \ustar^2}{3 D_1} + \chi_1\right\}$.
  By Lemma~\ref{lm:ddt_u_2_single} and Lemma~\ref{lm:conv_h2_zero_zero_1_v},
  there are moreover $\eta_2, \eta_3 \gt 0$ and $c_3, c_4 \gt 0$ such that
  \begin{align*}
    &\pe  \ddt \intom (u - \ustar)^2
          + \frac{3 D_1}{2} \intom |\nabla u|^2
          +  2 \left( -f_{u}(\ustar, \vstar) - \eta (a_1 + \mu_1) \right) \intom (u - \ustar)^2  \notag \\
    &\le  2 a_1 \ustar \intom (u - \ustar) v
          +  2 \chi_1 \ustar \intom \nabla u \cdot \nabla v 
          + \eta \chi_1 \intom |\nabla v|^2
    \qquad \text{in $(0, T_{\eta})$ for all $\eta \in (0, \eta_2]$}
  \end{align*}
  and
  \begin{align*}
        \|v(\cdot, t)\|_{\sob22}^2
    \le \left( \sqrt{c_2} c_3 \left(\|u_0 - \ustar\|_{\sob22} + \|v_0\|_{\sob22}\right)^{-1} + \sqrt{c_2} c_4 t \right)^{-2} 
    \qquad \text{in $(0, T_{\eta_3})$},
  \end{align*}
  provided $u_0, v_0$ comply with \eqref{eq:sss:cond_init}.

  Thus, fixing $\eta \defs \min\{\eta_1, \eta_2, \eta_3, 1\}$ as well as $u_0, v_0$ satisfying \eqref{eq:sss:cond_init}
  and noting that $f_u(\ustar, \vstar) = -\lambda_1$,
  we obtain
  \begin{align*}
          \ddt \intom (u - \ustar)^2
    &\le  - \frac{3D_1}{2} \intom |\nabla u|^2
          - 2 c_1 \intom (u - \ustar)^2
          + 2 a_1 \ustar \intom (u - \ustar)^2 v
          + 2 \chi_1 \ustar \intom \nabla u \cdot \nabla v
          + \eta \chi_1 \intom |\nabla u|^2 \\
    &\le  - c_1 \intom (u - \ustar)^2
          + \frac{a_1^2 \ustar^2}{c_1} \intom v^2
          + \left( \frac{2\chi_1^2 \ustar^2}{3 D_1} + \chi_1 \right) \intom |\nabla v|^2 \\
    &\le  - c_1 \intom (u - \ustar)^2
          + c_2 \left( \intom v^2 + \intom |\nabla v|^2 \right) \\
    &\le  - c_1 \intom (u - \ustar)^2
          + \left( c_3 \left(\|u_0 - \ustar\|_{\sob22} + \|v_0\|_{\sob22}\right)^{-1} + c_4 t \right)^{-2} 
    \qquad \text{in $(0, T_{\eta})$},
  \end{align*}
  which by the variation-of-constants formula implies
  \begin{align*}
          \intom (u - \ustar)^2(\cdot, t)
    &\le  \ure^{-c_1 t} \intom (u_0 - \ustar)^2(\cdot, t)
          + \int_0^t \ure^{-c_1 (t-s)} (c_3 I_0^{-1} + c_4 s)^{-2} \ds
    \qquad \text{for all $t \in (0, T_\eta)$},
  \end{align*}
  where we abbreviated $I_0 \defs \|u_0 - \ustar\|_{\sob22} + \|v_0\|_{\sob22}$.
  Noting that $[0, \infty) \ni s \mapsto (c_b I_0^{-1} + c_c s)^{-2}$ is decreasing, we further calculate
  \begin{align*}
          \int_0^t \ure^{-c_1 (t-s)} (c_3 I_0^{-1} + c_4 s)^{-2} \ds
    &=    \int_0^{t/2} \ure^{-c_1 (t-s)} (c_3 I_0^{-1} + c_4 s)^{-2} \ds
          + \int_{t/2}^t \ure^{-c_1 (t-s)} (c_3 I_0^{-1} + c_4 s)^{-2} \ds \\
    &\le  \frac{I_0^2}{c_3^2} \int_{t/2}^t \ure^{- c_1 s} \ds
          + \left(c_3 I_0^{-1} + \frac{c_4 t}{2}\right)^{-2} \int_0^{t/2} \ure^{-  s} \ds \\
    &\le  \frac{I_0^2}{c_1 c_3^2} \ure^{-\frac{c_1}{2} t}
          + \frac1{(\sqrt{c_1} c_3 I_0^{-1} + \frac{\sqrt{c_1} c_4 t}{2})^2}
    \qquad \text{for all $t \in (0, T_\eta)$}.
  \end{align*}
  Combining these estimates with Lemma~\ref{lm:ddt_delta_u_2_h2_pos_zero} and Lemma~\ref{lm:w22_delta_2}
  yields the statement for certain $K_1, K_2 \gt 0$.
\end{proof}

Finally, we deal with the aforementioned fully degenerate case.
\begin{lemma}\label{lm:conv_h2_zero_zero_2}
  Suppose~\eqref{eq:sss_h2} and $\lambda_1 = \lambda_2 = 0$.
  Then there are $\eta \gt 0$ and $K_1, K_2 \gt 0$ such that
  \begin{align}\label{eq:conv_h2_zero_zero:statement_1}
          \|u(\cdot, t)\|_{\sob22} + \|v(\cdot, t)\|_{\sob22}
    &\le  \left( K_1 \left(\|u_0\|_{\sob22} + \|v_0\|_{\sob22}\right)^{-1} + K_2 t \right)^{-1}
  \end{align}
  for all $t \in (0, T_\eta)$, where $T_\eta$ is defined in \eqref{eq:def_t_eta},
  provided $u_0, v_0$ satisfy \eqref{eq:sss:cond_init}.
\end{lemma}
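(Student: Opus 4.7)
\emph{Proof plan.}
Since $\lambda_1 = \lambda_2 = 0$, the definition of $(\ustar, \vstar)$ in \eqref{eq:sss:def_ustar} gives $\ustar = \vstar = 0$, and both $\lambda_1 = 0$ and the balance $\lambda_2 \mu_1 = \lambda_1 a_2$ hold trivially. The plan is to first establish algebraic decay of $\|u\|_{\leb1} + \|v\|_{\leb1}$ using Lemma~\ref{lm:ddt_u}, then to combine this with the exponential decay of $\intom |\Delta u|^2 + C_2 \intom |\Delta v|^2$ afforded by Lemma~\ref{lm:ddt_delta_u_2_h2_pos_zero}, and finally to interpolate via Lemma~\ref{lm:w22_delta_2}.

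For the first step, I would observe that~\eqref{eq:ddt_u:u} and \eqref{eq:ddt_u:v} with $\ustar = 0$ reduce to
\begin{align*}
  \ddt \intom u = -\mu_1 \intom u^2 + a_1 \intom uv
  \quad \text{and} \quad
  \ddt \intom v = -\mu_2 \intom v^2 - a_2 \intom uv
  \qquad \text{in $(0, \tmax)$}.
\end{align*}
Forming the combination $y(t) \defs a_2 \intom u + a_1 \intom v$ makes the cross terms cancel, yielding $y'(t) = -a_2 \mu_1 \intom u^2 - a_1 \mu_2 \intom v^2$. Two applications of the Cauchy--Schwarz inequality---namely $(\intom u)^2 \le |\Omega| \intom u^2$ (and analogously for $v$) together with $y^2 \le 2\max\{a_1,a_2\}^2 ((\intom u)^2 + (\intom v)^2)$---then produce some $c \gt 0$ with $y'(t) \le -c y(t)^2$ in $(0, \tmax)$. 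An ODE comparison gives $y(t) \le 1/(y(0)^{-1} + ct)$; since $y(0) \le C_1 (\|u_0\|_{\leb2} + \|v_0\|_{\leb2}) \le C_2 (\|u_0\|_{\sob22} + \|v_0\|_{\sob22}) \sfed C_2 I_0$ by Sobolev embedding, this translates into
\begin{align*}
  \|u(\cdot, t)\|_{\leb1} + \|v(\cdot, t)\|_{\leb1} \le \frac{\tilde K_1}{I_0^{-1} + \tilde K_2 t}
  \qquad \text{in $(0, T_\eta)$}
\end{align*}
for suitable $\tilde K_1, \tilde K_2 \gt 0$.

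For the second step, since $\lambda_2 \mu_1 = 0 = \lambda_1 a_2$, Lemma~\ref{lm:ddt_delta_u_2_h2_pos_zero} is applicable and provides $K \gt 0$, $C_2 \gt 0$ and (possibly smaller) $\eta \gt 0$ with $\intom |\Delta u|^2 + C_2 \intom |\Delta v|^2 \le C I_0^2 \ure^{-Kt}$ in $(0, T_\eta)$. Combined with Lemma~\ref{lm:w22_delta_2} in the form $\|\varphi\|_{\sob22} \le C \|\Delta \varphi\|_{\leb 2} + |\Omega|^{-1/2} \|\varphi\|_{\leb1}$ already used in the proof of Lemma~\ref{lm:conv_h2_zero_zero_1_v}(ii), this yields
\begin{align*}
  \|u(\cdot, t)\|_{\sob22} + \|v(\cdot, t)\|_{\sob22} \le c_1 I_0 \ure^{-Kt/2} + \frac{c_2}{I_0^{-1} + \tilde K_2 t}
  \qquad \text{in $(0, T_\eta)$}.
\end{align*}

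The main---and essentially only---remaining obstacle is the bookkeeping needed to absorb the exponential summand into the algebraic one. I would argue as follows: for any fixed $K_1, K_2 \gt 0$, we have
\begin{align*}
  c_1 I_0 \ure^{-Kt/2} \cdot (K_1 I_0^{-1} + K_2 t)
  = c_1 K_1 \ure^{-Kt/2} + c_1 K_2 I_0 t \ure^{-Kt/2}
  \le c_1 K_1 + \frac{2 c_1 K_2 \eps}{Ke},
\end{align*}
using $I_0 \lt \eps$ together with $\sup_{t \ge 0} t \ure^{-Kt/2} = 2/(Ke)$. Choosing first $K_1, K_2 \gt 0$ small enough (independently of $\eps$) so that $K_1 \le \tilde K_1/(2c_2)$, $K_2 \le \tilde K_2/(2c_2)$ and $c_1 K_1 \le 1/4$, and then $\eps \gt 0$ small enough so that $2 c_1 K_2 \eps/(Ke) \le 1/4$ allows for the desired bound~\eqref{eq:conv_h2_zero_zero:statement_1}.
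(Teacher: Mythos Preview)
Your proof is correct and follows the same approach as the paper: derive $L^1$ decay from the weighted sum $a_2\intom u + a_1\intom v$ via Lemma~\ref{lm:ddt_u}, combine with the Laplacian decay from Lemma~\ref{lm:ddt_delta_u_2_h2_pos_zero}, and pass to $\sob22$ using Lemma~\ref{lm:w22_delta_2}. You actually supply more detail than the paper on the final absorption step (the paper just writes ``combined with \ldots\ implies \ldots''); your use of $I_0 < \eps$ there is not strictly part of the lemma's hypotheses, but is harmless since this smallness is present in the only application, Lemma~\ref{lm:main_proof}.
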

\begin{proof}
  Set $c_1 \defs \frac{\min\{\mu_1, \mu_2\}}{2}$
  and fix $u_0, v_0$ complying with \eqref{eq:sss:cond_init}.

  By multiplying~\eqref{eq:ddt_u:u} and \eqref{eq:ddt_u:v} with $a_2$ and $a_1$, respectively, we obtain
  \begin{align*}
      \ddt \left( a_2 \intom u + a_1 \intom v \right)
    = - \mu_1 a_2 \intom u^2 - \mu_2 a_1 \intom v^2
    \qquad \text{in $(0, \tmax)$}.
  \end{align*}
  Hence, along with Hölder's inequality this implies
  \begin{align*}
        \ddt \left( a_2 \intom u + a_1 \intom v \right)
    \le - c_1 \left( a_2 \intom u + a_1 \intom v \right)^2
    \qquad \text{in $(0, \tmax)$},
  \end{align*}
  which upon integrating results in
  \begin{align}\label{eq:conv_h2_zero_zero:l1_conv}
        a_2 \intom u(\cdot, t) + a_1 \intom v(\cdot, t)
    \le \left( \left( a_2 \intom u_0 + a_1 \intom v_0 \right)^{-1} + c_1 t \right)^{-1}
    \qquad \text{for all $t \in (0, \tmax)$}.
  \end{align}

  As in the proof of Lemma~\ref{lm:conv_h2_zero_zero_1_v},
  we now apply Lemma~\ref{lm:w22_delta_2} (with $C \gt 0$ as in that lemma) to see that
  \begin{align*}
        \|\varphi\|_{\sob22}
    \le \|\varphi - \ol \varphi\|_{\sob22} + \|\ol \varphi\|_{\leb 2}
    \le C \|\Delta \varphi\|_{\leb 2} + |\Omega|^{-\frac12} \|\varphi\|_{\leb1}
    \qquad \text{for all $\varphi \in \con2$ with $\partial_\nu \varphi = 0$},
  \end{align*}
  which applied to $\varphi = u$ and $\varphi = v$
  and combined with \eqref{eq:conv_h2_zero_zero:l1_conv} and Lemma~\ref{lm:ddt_delta_u_2_h2_pos_zero}
  implies \eqref{eq:conv_h2_zero_zero:statement_1} for certain $K_1, K_2 \gt 0$ and $\eta \gt 0$.
\end{proof}

\section{Proof of Theorem~\ref{th:sss}}
The various lemmata from Section~\ref{sec:w22_est} allow us now to find $\eps \gt 0$
such that if $u_0, v_0$ satisfy (\eqref{eq:sss:cond_init} and) \eqref{eq:sss:cond_eps}
then $\tmax = \infty$ and $(u, v)$ converges to $(\ustar, \vstar)$.
\begin{lemma}\label{lm:main_proof}
  For $\eps \gt 0$ and $K_1, K_2 \gt 0$, define
  \begin{align*}
    y_{\eps, K_1, K_2} \colon [0, \infty) \ra \R, \quad t &\mapsto
    \begin{cases}
      (\frac1{K_1 \eps} + K_2 t)^{-1}, & \text{if \eqref{eq:sss_h2} holds and $\lambda_2 \mu_1 = \lambda_1 a_2$}, \\[1em]
      K_1 \eps \ure^{-K_2 t},  & \text{else}.
    \end{cases}
  \end{align*}
  Then there are $\eps \gt 0$ and $K_1, K_2 \gt 0$ such that $\tmax(u_0, v_0) = \infty$,
  \begin{align*}
    \|(u(u_0, v_0))(\cdot, t) - \ustar\|_{\sob22} + \|(v(u_0, v_0))(\cdot, t) - \vstar\|_{\sob22}
    &\le y_{\eps, K_1, K_2}(t)
    \qquad \text{for all $t \ge 0$},
  \end{align*}
  whenever $u_0, v_0$ satisfy \eqref{eq:sss:cond_init} and \eqref{eq:sss:cond_eps}.
\end{lemma}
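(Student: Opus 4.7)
The plan is to combine the various convergence estimates from Section~\ref{sec:w22_est} into a single bootstrap/continuity argument on the $\leb\infty$ norm. In each of the subcases listed at the start of Section~\ref{sec:w22_est}, the corresponding lemma already yields, for $u_0, v_0$ satisfying \eqref{eq:sss:cond_init} and a suitably small $\eta>0$ (depending only on parameters), constants $K_1, K_2>0$ and an estimate of the form
\begin{align*}
  \|u(\cdot, t) - \ustar\|_{\sob22} + \|v(\cdot, t) - \vstar\|_{\sob22}
  \le y_{\eps, K_1, K_2}(t)
  \qquad \text{for all } t \in (0, T_\eta),
\end{align*}
with $T_\eta = T_\eta(u_0,v_0)$ as in \eqref{eq:def_t_eta} and $\eps$ bounding the initial $\sob22$ deviation. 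Since $n \in \{1,2,3\}$, the Sobolev embedding $\sob22 \embed \leb\infty \cap \con\alpha$ (for some $\alpha \in (0,1)$) is available, which is the key link that lets an $\sob22$ smallness statement feed back into the $\leb\infty$ smallness defining $T_\eta$.

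First, I would fix $\eta>0$ small enough that the relevant lemma from Section~\ref{sec:w22_est} applies, and denote by $C_S>0$ the embedding constant for $\sob22 \embed \leb\infty$. Next, I would choose $\eps>0$ so small that both
\begin{align*}
  2 C_S \eps < \tfrac{\eta}{2}
  \qquad \text{and} \qquad
  2 C_S \cdot \sup_{t\ge 0} y_{\eps,K_1,K_2}(t) < \eta
\end{align*}
(where in both the exponential and algebraic cases the supremum is attained at $t=0$ and equals $K_1 \eps$, so this is a concrete smallness condition on $\eps$). By the continuity statement~\eqref{eq:local_ex:cont_sob22}, the first inequality guarantees $T_\eta > 0$, while the second ensures that the $\sob22$~bound from Section~\ref{sec:w22_est} gives $\|u - \ustar\|_{\leb\infty} + \|v - \vstar\|_{\leb\infty}$ strictly below $\eta$ throughout $(0,T_\eta)$. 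A standard continuity argument then shows $T_\eta = \tmax$: otherwise, by continuity one would obtain $\|u(\cdot,T_\eta)-\ustar\|_{\leb\infty} + \|v(\cdot,T_\eta)-\vstar\|_{\leb\infty} = \eta$, contradicting the strict bound. Having $T_\eta = \tmax$ together with the embedding $\sob22 \embed \con\alpha$ yields uniform $\con\alpha$ bounds on $(u,v)$ in $(0,\tmax)$, so the blow-up criterion~\eqref{eq:local_ex:ex_crit} forces $\tmax = \infty$, and the claimed convergence estimate is then exactly the estimate provided by the invoked lemma from Section~\ref{sec:w22_est}, now valid on all of $(0,\infty)$.

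The main obstacle is the bookkeeping across the five subcases: one must take $\eta$ and $\eps$ simultaneously small enough for every case, and one must verify that the hypotheses of each individual lemma from Section~\ref{sec:w22_est} are compatible with the smallness condition \eqref{eq:sss:cond_eps}—in particular in the degenerate branch \eqref{eq:sss_h2} with $\lambda_2 \mu_1 = \lambda_1 a_2$, where Lemmas~\ref{lm:conv_h2_zero_zero_1_v}, \ref{lm:conv_h2_zero_zero_1_u} and \ref{lm:conv_h2_zero_zero_2} express the bound in terms of $\|u_0-\ustar\|_{\sob22}+\|v_0-\vstar\|_{\sob22}$ (using $\vstar=0$), which is itself controlled by $\eps$, so the algebraic decay profile in the definition of $y_{\eps,K_1,K_2}$ matches correctly. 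Once these constants are consistently chosen, the proof is a clean closing argument requiring no further PDE analysis.
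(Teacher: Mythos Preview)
Your proposal is correct and follows essentially the same approach as the paper's own proof: invoke the case-by-case $\sob22$ estimates from Section~\ref{sec:w22_est} on $(0,T_\eta)$, use the Sobolev embedding $\sob22 \embed \con\alpha$ (valid for $n\le 3$) to feed these back into the $\leb\infty$ smallness defining $T_\eta$, conclude $T_\eta = \tmax$ by continuity, and then $\tmax = \infty$ from the blow-up criterion~\eqref{eq:local_ex:ex_crit}. The paper's version is slightly terser (it fixes $\eps \in (0, \frac{\eta}{c_1 \max\{K_1,1\}})$ in one stroke rather than writing two separate smallness conditions), but the logic is identical.
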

\begin{proof}
  Lemma~\ref{lm:conv_h1},
  Lemma~\ref{lm:conv_h2_pos_pos},
  Lemma~\ref{lm:ddt_delta_u_2_h2_pos_zero},
  Lemma~\ref{lm:conv_h2_pos_zero}
  Lemma~\ref{lm:conv_h2_zero_zero_1_v}~(ii)
  Lemma~\ref{lm:conv_h2_zero_zero_1_u}
  and Lemma~\ref{lm:conv_h2_zero_zero_2}
  imply
  that there are $\eta \gt 0$ and $K_1, K_2 \gt 0$ with the following property:
  Let $\eps' \gt 0$.
  If $u_0, v_0$ comply with \eqref{eq:sss:cond_init} and \eqref{eq:sss:cond_eps} with $\eps$ replaced by $\eps'$, then
  \begin{align}\label{eq:main_proof:le_y}
    \|u(\cdot, t) - \ustar\|_{\sob22} + \|v(\cdot, t) - \vstar\|_{\sob22} \le y_{\eps', K_1, K_2}(t)
    \qquad \text{for all $t \in [0, T_\eta)$},
  \end{align}
  where $(u, v) \defs (u(u_0, v_0), v(u_0, v_0))$ and $T_\eta \defs T_\eta(u_0, v_0)$ is as in \eqref{eq:def_t_eta}.

  Thanks to the restriction $n \le 3$,
  Sobolev's embedding theorem asserts that there are $\alpha \in (0, 1)$ and $c_1 \gt 0$ such that
  \begin{align*}
    \|\varphi\|_{\con\alpha} \le c_1 \|\varphi\|_{\sob 22}
    \qquad \text{for all $\varphi \in \sob22$.}
  \end{align*}

  Fix an arbitrary $\eps \in (0, \frac{\eta}{c_1 \max\{K_1, 1\}})$ and $u_0, v_0$
  complying not only with \eqref{eq:sss:cond_init} but also with \eqref{eq:sss:cond_eps}.
  As then
  \begin{align*}
        \|u_0 - \ustar\|_{\leb\infty} + \|v_0 - \vstar\|_{\leb\infty}
    \le c_1 \left( \|u_0 - \ustar\|_{\sob22} + \|v_0 - \vstar\|_{\sob22} \right)
    \le c_1 \eps
    \lt \eta,
  \end{align*}
  we infer $T_\eta \gt 0$ from $u, v \in C^0(\Ombar \times [0, \tmax))$.
  Moreover,
  \begin{align}\label{eq:main_proof:u+v_lt_eta}
          \|u(\cdot, t) - \ustar \|_{\leb\infty} + \|v(\cdot, t) - \ustar \|_{\leb\infty}
    &\le  \|u(\cdot, t) - \ustar \|_{\con\alpha} + \|v(\cdot, t) - \ustar \|_{\con\alpha} \notag \\
    &\le  c_1 \left( \|u(\cdot, t) - \ustar \|_{\sob22} + \|v(\cdot, t) - \vstar\|_{\sob22} \right) \notag \\
    &\le  c_1 y_{\eps, K_1, K_2}(t) \notag \\
    &\le  c_1 y_{\eps, K_1, K_2}(0) \notag \\
    &=    K_1 c_1 \eps
    \lt   \eta
    \qquad \text{for all $t \in (0, T_\eta)$},
  \end{align}
  hence the definition~\eqref{eq:def_t_eta} of $T_\eta$ asserts $T_\eta = \tmax$.
  In that case, \eqref{eq:main_proof:u+v_lt_eta} further implies $\tmax = \infty$
  because of the blow-up criterion~\eqref{eq:local_ex:ex_crit}.
  Finally, as then $T_\eta = \tmax = \infty$, the statement is equivalent to \eqref{eq:main_proof:le_y}.
\end{proof}

Theorem~\ref{th:sss} is now a direct consequence of already proved lemmata.
\begin{proof}[Proof of Theorem~\ref{th:sss}]
  Local existence and the regularity statements were already part of Lemma~\ref{lm:local_ex},
  while global extensibility, convergence to $(\ustar, \vstar)$ as well as the claimed convergence rates
  were the subject of Lemma~\ref{lm:main_proof}.
\end{proof}

\section{Possible generalizations of Theorem~\ref{th:sss}}\label{sec:gen}
At last, let us discuss whether the methods used above,
could potentially be used to derive more general versions of Theorem~\ref{th:sss}.

\begin{remark}
  Recall that the limitation on the space dimension, namely that $n \in \{1, 2, 3\}$, has only been used at one place:
  In the proof of Lemma~\ref{lm:main_proof} we made use of the embedding $\sob22 \embed \con\alpha$ (for some $\alpha \in (0, 1)$),
  which only holds in said space dimensions.
  Thus, it is conceivable that replacing $\sob22$ by $\sob m2$ for suitable $m \in \N$ in Theorem~\ref{th:sss}
  allows for certain generalizations of our main result.

  Indeed, if $n=1$, Theorem~\ref{th:sss} remains correct
  if one replaces $\sob22$ by $\sob12$ in all occurrences (and $\sobn22$ also by $\sob12$).
  This can be seen by a straightforward modification of the proofs above:
  Combine Lemma~\ref{lm:ddt_u_2} only with Lemma~\ref{lm:ddt_nabla_u_2} and not also with Lemma~\ref{lm:ddt_delta_u_2}.
  However, a detailed proof would lead to either a considerably longer or a unreasonably more complicated exposition (or to both)
  and is hence omitted.
  
  At first glance, similar arguments as above appear to imply an analogon of Theorem~\ref{th:sss}
  (with $\sob22$ replaced by $\sob m2$ for sufficiently large $m \in \N$) even for higher dimensions.
  The main problem, however, is, that during the computations several boundary terms would appear,
  which apparently cannot be dealt with easily.
  Let us emphasize that the question whether (a suitably modified version of) Theorem~\ref{th:sss}
  holds also in the higher dimensional setting is purely of mathematical interest.
  The biologically relevant dimensions are covered in Theorem~\ref{th:sss}.
\end{remark}

\begin{remark}
  The prototypical choices of $\rho_1, \rho_2, f$ and $g$ in \eqref{prob:general}
  are mainly made for simplicity.
  We leave it to further research to determine more general conditions on these functions
  allowing for a theorem of the form of Theorem~\ref{th:sss}.
  
  Still, the methods employed should be robust enough to also allow for (certain) nonlinear taxis sensitivities, for instance.
  At least for the case \eqref{eq:sss_h2} with $\lambda_2 \mu_1 \gt \lambda_1 a_2$, however,
  the signs of $\rho_1$ and $\rho_2$ are important:
  Our approach demands, that, roughly speaking, predators move towards their prey and the prey flees from them.
  
  The case \eqref{eq:sss_h2} with $\lambda_2 \mu_1 \le \lambda_1 a_2$ is even less sensitive to such changes.
  In fact, as the proofs above clearly show, the conclusion of Theorem~\ref{th:sss}
  remains true for different signs of $\chi_1, \chi_2$
  (with the exception that for $\chi_1 \gt 0 \gt \chi_2$ or $\chi_1 \lt 0 \lt \chi_2$,
  one has to do some additional work at the level of local existence).

  Likewise, the methods presented here should, in general, also work for different functional responses.
  Again, there is one caveat:
  The species moving towards (away from) the other one needs to benefit from (be harmed by) inter-species encounters.
\end{remark}

\appendix
\section{Gagliardo--Nirenberg inequalities}
Throughout the appendix, we fix a smooth, bounded domain $\Omega \subset \R^n$, $n \in \N$ and,
for $m \in \N$ and $p \in [1, \infty)$,
set $\sobn mp \defs \complete{\{\,\varphi \in \con\infty: \partial_\nu \varphi = 0 \text{ on } \partial \Omega\,\}}{\sob mp}$.
(As can be seen easily, for $m = p = 2$, this definition is consistent with the definition of $\sobn22$ given in~\eqref{eq:sss:def_sobn22}.)

We begin by stating Poincar\'e's inequality and straightforward consequences thereof.
\begin{lemma}\label{lm:poincare}
  There exists $\cp \gt 0$ such that
  \begin{alignat*}{2}%\label{eq:poincare:statement}
    \intom (\varphi - \ol \varphi)^2 &\le  \cp \intom |\nabla \varphi|^2
    && \qquad \text{for all $\varphi \in \sob12$} \\
    \intom |\nabla \varphi|^2 &\le  \cp \intom |\Delta \varphi|^2
    && \qquad \text{for all $\varphi \in \sobn22$} \quad \text{and} \\
    \intom |\Delta \varphi|^2 &\le  \cp \intom |\nabla \Delta \varphi|^2
    && \qquad \text{for all $\varphi \in \sobn32$}.
  \end{alignat*}
\end{lemma}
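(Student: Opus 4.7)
The plan is to derive all three inequalities from the standard Poincar\'e inequality for functions with vanishing mean, which supplies a constant $c_1 \gt 0$ such that $\intom (\psi - \ol\psi)^2 \le c_1 \intom |\nabla \psi|^2$ for every $\psi \in \sob12$; this is the first of the three claimed bounds and is available for the bounded Lipschitz domain $\Omega$ by classical results. Taking the maximum of $c_1$ with the constants produced below then yields a common $\cp$.

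For the second inequality, fix $\varphi \in \sobn22$ and observe that since $\partial_\nu \varphi = 0$ on $\partial \Omega$, an integration by parts gives
\begin{align*}
    \intom |\nabla \varphi|^2
  = -\intom \varphi \Delta \varphi
  = -\intom (\varphi - \ol \varphi) \Delta \varphi,
\end{align*}
where the last step uses $\intom \Delta \varphi = \int_{\partial\Omega} \partial_\nu \varphi = 0$. The Cauchy--Schwarz inequality combined with the first bound applied to $\varphi$ yields
\begin{align*}
    \intom |\nabla \varphi|^2
  \le \left( \intom (\varphi - \ol \varphi)^2 \right)^{1/2} \left( \intom |\Delta \varphi|^2 \right)^{1/2}
  \le \sqrt{c_1} \left( \intom |\nabla \varphi|^2 \right)^{1/2} \left( \intom |\Delta \varphi|^2 \right)^{1/2},
\end{align*}
and squaring (after dividing by $(\intom |\nabla\varphi|^2)^{1/2}$, which is legitimate in the nontrivial case) produces $\intom |\nabla \varphi|^2 \le c_1 \intom |\Delta \varphi|^2$.

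For the third inequality, let $\varphi \in \sobn32$. Then $\Delta \varphi \in \sob12$, and the Neumann condition $\partial_\nu \varphi = 0$ again gives $\intom \Delta \varphi = 0$, hence $\overline{\Delta \varphi} = 0$. Applying the mean-zero Poincar\'e inequality to $\psi \defs \Delta \varphi$ therefore produces
\begin{align*}
    \intom |\Delta \varphi|^2
  = \intom (\Delta \varphi - \overline{\Delta \varphi})^2
  \le c_1 \intom |\nabla \Delta \varphi|^2,
\end{align*}
as required. Defining $\cp$ as the maximum of the constants appearing above completes the argument. No step is genuinely delicate; the only subtlety worth flagging is the use of the Neumann boundary condition to guarantee $\overline{\Delta \varphi} = 0$, which is what allows the first (mean-zero) Poincar\'e inequality to be cascaded up the scale of derivatives without loss.
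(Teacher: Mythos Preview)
Your proof is correct. For the first two inequalities you argue exactly as the paper does: cite the standard mean-zero Poincar\'e inequality, then integrate by parts and feed the first bound back in via Cauchy--Schwarz.

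For the third inequality you take a slightly different, and in fact more direct, route. The paper integrates by parts once more,
\[
  \intom |\Delta \varphi|^2 = -\intom \nabla \varphi \cdot \nabla \Delta \varphi + \int_{\partial\Omega} \Delta\varphi\,\partial_\nu\varphi
  \le \Big(\cp \intom |\Delta\varphi|^2\Big)^{1/2}\Big(\intom |\nabla\Delta\varphi|^2\Big)^{1/2},
\]
and then invokes the \emph{second} inequality on the factor $\intom|\nabla\varphi|^2$. You instead observe that $\partial_\nu\varphi = 0$ forces $\overline{\Delta\varphi}=0$, so the \emph{first} Poincar\'e inequality applied directly to $\psi = \Delta\varphi$ already gives the result in one line. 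Both arguments rely on the Neumann condition in an essential way, but yours avoids a second integration by parts and does not need the intermediate inequality; the paper's version, on the other hand, mirrors the structure of the second step and makes the cascading more visually uniform. Either way the resulting constant is the same $\cp$.
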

\begin{proof}
  By Poincar\'e's inequality (cf.\ \cite[Corollary~12.28]{LeoniFirstCourseSobolev2009}), there is $\cp \gt 0$ such that
  \begin{align}\label{eq:poincare:1}
    \intom (\varphi - \ol \varphi)^2 \le \cp \intom |\nabla \varphi|^2
    \qquad \text{for all $\varphi \in \sob12$}.
  \end{align}

  By straightforward approximation/normalization arguments,
  it is sufficient to prove the remaining two inequalities
  for all $\varphi \in C^\infty(\Ombar)$ with $\intom \varphi = 0$ and $\partial_\nu \varphi = 0$ on $\partial \Omega$.
  Thus, we fix such a $\varphi$.

  An integration by parts, Hölder's inequality and \eqref{eq:poincare:1} give
  \begin{align*}
          \intom |\nabla \varphi|^2
    &=    -\intom \varphi \Delta \varphi + \int_{\partial \Omega} \varphi \partial_\nu \varphi 
     \le  \left( \intom \varphi^2 \right)^\frac12 \left( \intom |\Delta \varphi|^2 \right)^\frac12
          + 0
     \le  \left( \cp \intom |\nabla \varphi|^2 \right)^\frac12 \left( \intom |\Delta \varphi|^2 \right)^\frac12,
  \end{align*}
  hence, in both cases $\intom |\nabla \varphi|^2 = 0$ and $\intom |\nabla \varphi|^2 \gt 0$,
  \begin{align*}%\label{eq:poincare:2}
    \intom |\nabla \varphi|^2 \le \cp \intom |\Delta \varphi|^2.
  \end{align*}
  
  Similarly, we have
  \begin{align*}%\label{eq:poincare:3}
          \intom |\Delta \varphi|^2
    &=    -\intom \nabla \varphi \cdot \nabla \Delta \varphi + \intom \Delta \varphi \partial_\nu \varphi
     \le  \left( \cp \intom |\Delta \varphi|^2 \right)^\frac12 \left( \intom |\nabla \Delta \varphi|^2 \right)^\frac12
          + 0
     \le  \cp \intom |\nabla \Delta \varphi|^2.
    \qedhere
  \end{align*}
\end{proof}

The following lemma should also be well-known.
However, failing to find a suitable reference, we choose to give a short proof.
\begin{lemma}\label{lm:w22_delta_2}
  Let $p \in (1, \infty)$. 
  There exists $C \gt 0$ such that
  \begin{align*}
    \|\varphi - \ol \varphi\|_{\sob2p} \le C \|\Delta \varphi\|_{\leb p}
    \qquad \text{for all $\varphi \in \sobn2p$}.
  \end{align*}
\end{lemma}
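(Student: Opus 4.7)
The plan is to reduce the inequality to classical $L^p$ elliptic regularity for the Neumann Laplacian. Setting $\psi \defs \varphi - \ol\varphi$, I would work with the problem $\Delta\psi = \Delta\varphi$ in $\Omega$, $\partial_\nu\psi = 0$ on $\partial\Omega$, $\ol\psi = 0$. The Neumann boundary condition for $\psi$ is inherited from $\varphi$: by the very definition of $\sobn2p$ as a completion of smooth functions satisfying $\partial_\nu(\cdot) = 0$, continuity of the normal-trace map $\sob2p \to W^{1-1/p,p}(\partial\Omega)$, $\varphi \mapsto \partial_\nu\varphi$, forces $\partial_\nu\varphi = 0$ in $W^{1-1/p,p}(\partial\Omega)$.

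The first step would be to verify the compatibility condition $\intom \Delta\varphi = 0$. I would approximate $\varphi$ in $\sob2p$ by smooth functions $(\varphi_k)_{k \in \N}$ with $\partial_\nu\varphi_k \equiv 0$ on $\partial\Omega$; the divergence theorem then yields $\intom \Delta\varphi_k = 0$ for each $k$, and continuity of $\Delta \colon \sob2p \to \leb p$ allows passing to the limit.

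The second, central step is to invoke the standard $L^p$ regularity for the Neumann problem: for every $f \in \leb p$ with $\intom f = 0$ there is a unique $w \in \sob2p$ with $\intom w = 0$ and $\partial_\nu w = 0$ solving $\Delta w = f$, together with the estimate $\|w\|_{\sob2p} \le C\|f\|_{\leb p}$ for some $C = C(\Omega, p) \gt 0$. This is a special case of the Agmon--Douglis--Nirenberg estimates for elliptic boundary value problems satisfying the complementing condition and uses smoothness of $\partial\Omega$. Applying this to $f \defs \Delta\varphi$ and identifying the resulting solution with $\psi$ via uniqueness immediately yields the asserted bound.

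The main (rather mild) obstacle lies in the bookkeeping around the abstractly defined space $\sobn2p$, namely making the Neumann condition and the compatibility $\intom f = 0$ rigorous. Both are routine consequences of continuity of the Laplacian and of the normal-trace map on $\sob2p$; no technical ideas beyond the cited elliptic regularity are required, which is why the author presumably chooses to give only a short proof.
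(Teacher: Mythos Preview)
Your argument is correct: the full $L^p$ Neumann solvability statement you cite (existence, uniqueness, and the estimate $\|w\|_{\sob2p}\le C\|f\|_{\leb p}$ for mean-zero data) is indeed a consequence of Agmon--Douglis--Nirenberg theory, and identifying $w$ with $\psi=\varphi-\ol\varphi$ via uniqueness is unproblematic once you have verified $\partial_\nu\psi=0$ and $\intom\Delta\varphi=0$ as you do.

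The paper, however, takes a different route. Rather than appealing directly to the sharp ADN estimate (the author remarks he could not locate a convenient reference), it starts only from the weaker a~priori bound $\|\psi\|_{\sob2p}\le C(\|\Delta\psi\|_{\leb p}+\|\psi\|_{\leb p})$ from Friedman's book and then removes the lower-order term by a contradiction--compactness argument: a normalized sequence violating the desired inequality is shown, via weak $\sob2p$ compactness and the compact embedding $\sob2p\hookrightarrow\hookrightarrow\leb p$, to converge to a mean-zero weakly harmonic function, hence to $0$, which contradicts the Friedman estimate. Your approach is shorter and conceptually cleaner but hides more in the black box; the paper's approach is more self-contained and illustrates the standard technique of upgrading elliptic estimates by absorbing compact perturbations.
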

\begin{proof}
  Suppose this is not the case.  
  By an approximation/normalization argument,
  there exists $(\varphi_k)_{k \in \N} \subset C^\infty(\Ombar)$
  with $\intom \varphi_k = 0$ as well as $\partial_\nu \varphi_k = 0$ on $\partial \Omega$
  and
  \begin{align*}
    \|\varphi_k\|_{\sob2p} \gt k \|\Delta \varphi_k\|_{\leb p}
    \qquad \text{for all $k \in \N$}.
  \end{align*}
  Without loss of generality, we may assume $\|\varphi_k\|_{\sob2p} = 1$ for all $k \in \N$.
  Thus, there are $\varphi_\infty \in \sob2p$ and $(k_j)_{j \in \N} \subset \N$ with $k_j \ra \infty$ for $j \ra \infty$ such that
  \begin{align*}
    \varphi_{k_j} \rh \varphi_\infty \qquad \text{in $\sob2p$ as $j \ra \infty$}.
  \end{align*}
  Since $\sob2p \embed \embed \leb p$,
  this implies
  \begin{align*}
    \varphi_{k_j} \ra \varphi_\infty \qquad \text{in $\leb p$ as $j \ra \infty$}
  \end{align*}
  and thus also $\intom \varphi_\infty = 0$.
 
  As
  \begin{align*}
          \left| \intom \nabla \varphi_\infty \cdot \nabla \psi \right|
     =    \lim_{j \ra \infty} \left| \intom \nabla \varphi_{k_j} \cdot \nabla \psi \right|
     =    \lim_{j \ra \infty} \left| \intom \Delta \varphi_{k_j} \psi \right|
     %\le  \limsup_{j \ra \infty} \|\Delta \varphi_{k_j}\|_{\leb p} \|\psi\|_{\leb{\frac{p-1}{p}}}
     \le  \limsup_{j \ra \infty} \frac{1}{k_j} \|\psi\|_{\leb{\frac{p}{p-1}}}
    =     0
    \qquad \text{for all $\psi \in \con\infty$}
  \end{align*}
  by Hölder's inequality,
  we further conclude that $\varphi_\infty$ is constant
  and because of $\intom \varphi_\infty = 0$ we have $\varphi_\infty \equiv 0$.

  However, as \cite[Theorem~19.1]{FriedmanPartialDifferentialEquations1976} asserts
  \begin{align*}
    \|\psi\|_{\sob 2p} \le C \|\Delta \psi\|_{\leb p} + C \|\psi\|_{\leb p}
    \qquad \text{for all $\psi \in C^2(\Ombar)$ with $\partial_\nu \psi = 0$ on $\partial \Omega$}
  \end{align*}
  for some $C \gt 0$,
  we derive
  \begin{align*}
        1
    =   \lim_{j \ra \infty} \|\varphi_{k_j}\|_{\sob 2p}
    \le C \limsup_{j \ra \infty} \left( \|\Delta \varphi_{k_j}\|_{\leb p} + \|\varphi_{k_j}\|_{\leb p} \right)
    =   0,
  \end{align*}
  a contradiction.
\end{proof}

These lemmata immediately imply the following version of the Gagliardo--Nirenberg inequality.
\begin{lemma}\label{lm:gni_sob_22}
  Let $j \in \{0, 1\}$ and suppose $p, q \in [1, \infty], r \in (1, \infty)$ are such that 
  \begin{align*}
    \theta \defs \frac{\frac1p - \frac jn - \frac1q}{\frac1r - \frac2n - \frac1q} \in \left[\frac j2, 1\right).
  \end{align*}
  Then there exists $C \gt 0$ such that
  \begin{align}\label{eq:gni_sob_22:statement_1}
    \|\varphi - \ol \varphi\|_{\sob j p} \le C \|\Delta \varphi\|_{\leb r}^\theta \|\varphi - \ol \varphi\|_{\leb q}^{1-\theta}
    \qquad \text{for all $\varphi \in \sobn2r$}.
  \end{align}
  
  In particular, for any $r \in (1, \infty)$, we may find $C' \gt 0$ such that
  \begin{align}\label{eq:gni_sob_22:statement_2}
    \|\nabla \varphi\|_{\leb {2r}}^{2r} \le C' \|\Delta \varphi\|_{\leb r}^r \|\varphi - \ol \varphi\|_{\leb \infty}^r
    \qquad \text{for all $\varphi \in \sobn2r$}.
  \end{align}
\end{lemma}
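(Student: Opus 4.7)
The plan is to reduce \eqref{eq:gni_sob_22:statement_1} to the classical Gagliardo--Nirenberg inequality on bounded domains, which carries an extra additive $\leb q$-term, and to eliminate that additive term using Lemma~\ref{lm:w22_delta_2} together with a contradiction argument reminiscent of the proof of that very lemma. The second assertion \eqref{eq:gni_sob_22:statement_2} then follows as a direct specialization.

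More concretely, I would first apply the classical Gagliardo--Nirenberg inequality on the bounded domain $\Omega$ to the mean-zero function $\varphi - \ol\varphi$, yielding
\begin{align*}
  \|D^j(\varphi - \ol\varphi)\|_{\leb p}
  \le C_1 \|\varphi - \ol\varphi\|_{\sob 2 r}^\theta \|\varphi - \ol\varphi\|_{\leb q}^{1-\theta}
  + C_2 \|\varphi - \ol\varphi\|_{\leb q}
\end{align*}
for some constants $C_1, C_2 > 0$ independent of $\varphi$. Since $\varphi - \ol\varphi \in \sobn 2r$ and $\Delta(\varphi - \ol\varphi) = \Delta\varphi$, Lemma~\ref{lm:w22_delta_2} replaces $\|\varphi - \ol\varphi\|_{\sob 2 r}$ by a constant multiple of $\|\Delta\varphi\|_{\leb r}$, reducing the task to absorbing the residual additive $\|\varphi - \ol\varphi\|_{\leb q}$-term.

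For this absorption I would argue by contradiction, along the lines of the proof of Lemma~\ref{lm:w22_delta_2}. If no $C$ made \eqref{eq:gni_sob_22:statement_1} hold, one could extract after normalization a sequence $(\varphi_k) \subset \sobn 2r$ with $\ol\varphi_k = 0$, $\|\varphi_k\|_{\sob j p} = 1$, and $\|\Delta\varphi_k\|_{\leb r}^\theta \|\varphi_k\|_{\leb q}^{1-\theta} \to 0$. Inserting this into the additive-form inequality above would force $\|\varphi_k\|_{\leb q}$ to stay bounded away from zero, hence by the product condition $\|\Delta\varphi_k\|_{\leb r} \to 0$, and then Lemma~\ref{lm:w22_delta_2} would yield $\varphi_k \to 0$ strongly in $\sob 2 r$. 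A Sobolev embedding would then transfer this convergence to $\leb q$, contradicting the lower bound on $\|\varphi_k\|_{\leb q}$. Finally, \eqref{eq:gni_sob_22:statement_2} follows from \eqref{eq:gni_sob_22:statement_1} applied with $j = 1$, $p = 2r$, $q = \infty$ (a short computation shows $\theta = \tfrac12 \in [\tfrac12, 1)$) and raising both sides to the $2r$-th power.

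The main subtlety lies in the last step of the contradiction, namely verifying that the parameter constraint $\theta \in [j/2, 1)$ indeed places $q$ in a range where $\sob 2 r$ embeds continuously into $\leb q$. In all applications made in the main text one has $r = 2$ and $n \le 3$, so that $\sob 2 2 \embed \leb\infty$ by Sobolev embedding and the complication disappears; covering the lemma in its stated generality requires a careful case analysis of the various subranges of $(j, p, q, r, n)$ compatible with the hypothesis on $\theta$.
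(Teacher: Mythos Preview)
Your overall strategy---classical Gagliardo--Nirenberg plus Lemma~\ref{lm:w22_delta_2}---matches the paper, but the paper absorbs the additive lower-order term far more directly than you do. Instead of writing the extra term as $C_2\|\varphi-\ol\varphi\|_{\leb q}$ and then running a contradiction argument that hinges on the embedding $\sob2r\hookrightarrow\leb q$, the paper keeps the additive term in $\leb1$ (which Nirenberg's formulation allows) and observes that on a bounded domain, H\"older's inequality yields
\[
  \|\psi\|_{\leb1}\le c_2\,\|\psi\|_{\leb r}^{\theta}\|\psi\|_{\leb q}^{1-\theta}
  \qquad\text{for all }\psi\in\leb r\cap\leb q,
\]
simply because $\|\psi\|_{\leb1}\le|\Omega|^{1-1/s}\|\psi\|_{\leb s}$ and $\|\psi\|_{\leb s}\le\|\psi\|_{\leb r}^{\theta}\|\psi\|_{\leb q}^{1-\theta}$ for $1/s=\theta/r+(1-\theta)/q\le1$. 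Applying this to $\psi=\varphi-\ol\varphi$ merges the additive term into the main one, giving $\|\varphi-\ol\varphi\|_{\sob j p}\le c_3\|\varphi-\ol\varphi\|_{\sob2r}^{\theta}\|\varphi-\ol\varphi\|_{\leb q}^{1-\theta}$, after which Lemma~\ref{lm:w22_delta_2} finishes the job.

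The upshot is that the embedding subtlety you flag---and the ``careful case analysis of the various subranges'' you anticipate---simply never arises in the paper's argument. Your route is not wrong, but it is strictly more laborious and leaves a genuine obligation (verifying $\sob2r\hookrightarrow\leb q$ under the stated hypotheses on $\theta$) that the paper avoids entirely with a one-line H\"older estimate. Your treatment of \eqref{eq:gni_sob_22:statement_2} coincides with the paper's.
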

\begin{proof}
  The usual Gagliardo--Nirenberg inequality~\cite{NirenbergEllipticPartialDifferential1959} gives $c_1 \gt 0$ such that
  \begin{align*}
        \|\varphi - \ol \varphi\|_{\sob j p}
    \le c_1 \|D^2 \varphi\|_{\leb r}^\theta \|\varphi - \ol \varphi\|_{\leb q}^{1-\theta} + c_1 \|\varphi - \ol \varphi\|_{\leb 1}
    \qquad \text{for all $\varphi \in \sob2r$}.
  \end{align*}
  As Hölder's inequality asserts
  \begin{align*}
    \|\psi\|_{\leb 1} \le c_2 \|\psi\|_{\leb r}^\theta \|\psi\|_{\leb q}^{1-\theta}
    \qquad \text{for all $\psi \in \leb r \cap \leb q$}
  \end{align*}
  for some $c_2 \gt 0$,
  we find $c_3 \gt 0$ such that
  \begin{align*}
    \|\varphi - \ol \varphi\|_{\sob j p} \le c_3 \|\varphi - \ol \varphi\|_{\sob2r}^\theta \|\varphi - \ol \varphi\|_{\leb q}^{1-\theta}
    \qquad \text{for all $\varphi \in \sob2r$}.
  \end{align*}
  In conjunction with Lemma~\ref{lm:w22_delta_2}, this proves \eqref{eq:gni_sob_22:statement_1}.

  Moreover, for any $r \in (1, \infty)$, letting $j \defs 1$, $p \defs 2r$ and $q \defs \infty$, we see that
  \begin{align*}
        \frac{\frac1p - \frac jn - \frac1q}{\frac1r - \frac2n - \frac1q}
    =   \frac{\frac1{2r} - \frac1n}{\frac1r - \frac2n}
    =   \frac12
    \in \left[\frac j2, 1\right).
  \end{align*}
  Hence, \eqref{eq:gni_sob_22:statement_2} follows from \eqref{eq:gni_sob_22:statement_1}.
\end{proof}

In order to avoid any discussions how $\intom |D^3 \varphi|^2$ and $\intom |\nabla \Delta \varphi|^2$
relate for $\varphi \in \sobn32$, 
we choose to prove the following Gagliardo--Nirenberg-type inequalities,
which have been used in the proof of Lemma~\ref{lm:ddt_delta_u_2}, by hand.
\begin{lemma}\label{lm:gni_sob_32}
  There exists $C \gt 0$ such that for all $\varphi \in \sobn32$
  the estimates
  \begin{align*}
    \intom |\nabla \varphi|^6 &\le C \|\varphi - \ol \varphi\|_{\leb\infty}^4 \intom |\nabla \Delta \varphi|^2
  \intertext{and}
    \intom |\Delta \varphi|^3 &\le C \|\varphi - \ol \varphi\|_{\leb\infty} \intom |\nabla \Delta \varphi|^2
  \end{align*}
  hold.
\end{lemma}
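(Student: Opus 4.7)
The plan is to establish the two inequalities jointly, exploiting that they are closely coupled. As a first step, applying the consequence \eqref{eq:gni_sob_22:statement_2} of Lemma~\ref{lm:gni_sob_22} with $r = 3$ gives
\begin{equation*}
  \intom |\nabla \varphi|^6 \le C_1 \intom |\Delta \varphi|^3 \cdot \|\varphi - \ol \varphi\|_{\leb\infty}^3
\end{equation*}
for some $C_1 \gt 0$ and all relevant $\varphi$, so that the first asserted inequality follows immediately from the second via a direct combination. I will therefore concentrate on proving the second one.

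Setting $\psi \defs \varphi - \ol \varphi$ and noting that $\R \ni \xi \mapsto \xi |\xi|$ is $C^1$ with derivative $\xi \mapsto 2|\xi|$, the boundary condition $\partial_\nu \psi = 0$ allows an integration by parts (first for smooth $\psi$, then by density of $\{\varphi \in C^\infty(\ol\Omega) : \partial_\nu \varphi = 0\}$ in $\sobn32$):
\begin{equation*}
  \intom |\Delta \psi|^3 = \intom \Delta \psi \cdot \bigl(\Delta \psi \, |\Delta \psi|\bigr) = -2 \intom |\Delta \psi| \, \nabla \psi \cdot \nabla \Delta \psi.
\end{equation*}
Hölder's inequality with exponents $(3, 6, 2)$ then yields the bound $\|\Delta \psi\|_{\leb3}^3 \le 2 \|\Delta \psi\|_{\leb3} \|\nabla \psi\|_{\leb6} \|\nabla \Delta \psi\|_{\leb2}$, into which the estimate $\|\nabla \psi\|_{\leb6} \le C_1^{1/6} \|\Delta \psi\|_{\leb3}^{1/2} \|\psi\|_{\leb\infty}^{1/2}$ (coming from the first display above) can be substituted to give
\begin{equation*}
  \|\Delta \psi\|_{\leb3}^3 \le 2 C_1^{1/6} \|\Delta \psi\|_{\leb3}^{3/2} \|\psi\|_{\leb\infty}^{1/2} \|\nabla \Delta \psi\|_{\leb2}.
\end{equation*}
Dividing by $\|\Delta \psi\|_{\leb3}^{3/2}$ (the case $\|\Delta \psi\|_{\leb3} = 0$ being trivial) and squaring then produces the second asserted inequality with some $C \gt 0$; plugging it back into the first display closes the argument.

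I expect the main obstacle to be technical rather than conceptual, namely the rigorous verification of the chain rule $\nabla (\Delta \psi \, |\Delta \psi|) = 2 |\Delta \psi| \nabla \Delta \psi$ in a Sobolev sense, since the product is not smooth even when $\psi$ is. The plan here is a standard approximation: apply the identity classically to smooth test functions with vanishing normal derivative (where it follows directly from the $C^1$-regularity of $\xi \mapsto \xi|\xi|$) and pass to the limit in $\sobn32$, using that every term in the Hölder estimate above depends continuously on $\psi$ in this topology.
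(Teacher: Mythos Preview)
Your proof is correct and follows essentially the same route as the paper's: both start from \eqref{eq:gni_sob_22:statement_2} with $r=3$, integrate by parts using $(\xi|\xi|)' = 2|\xi|$ and the Neumann condition, apply H\"older with exponents $(3,6,2)$, feed the $\|\nabla\psi\|_{\leb6}$ factor back through the Gagliardo--Nirenberg bound, and close by density. The paper's version is only cosmetically different (it works with $\varphi$ rather than $\psi=\varphi-\ol\varphi$ and handles the approximation in one line).
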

\begin{proof}
  By Lemma~\ref{lm:gni_sob_22}, there is $c_1 \gt 0$ such that
  \begin{align}\label{eq:gni_sob_32:first_sob16}
          \intom |\nabla \varphi|^6
    &\le  c_1
          \|\varphi - \ol \varphi\|_{\leb \infty}^3
          \intom |\Delta \varphi|^3
    \qquad \text{for all $\varphi \in \sobn23$}.
  \end{align}
  
  Let $\varphi \in \con3$ with $\partial_\nu \varphi = 0$ on $\partial \Omega$.
  Noting that $(|\xi| \xi)' = 2 |\xi|$ for $\xi \in \R$,
  by an integration by parts,
  Hölder's inequality
  and \eqref{eq:gni_sob_32:first_sob16} we obtain
  \begin{align*}
          \intom |\Delta \varphi|^3
    &=    \intom |\Delta \varphi| \Delta \varphi \Delta \varphi \\
    &=    - \intom \nabla(|\Delta \varphi| \Delta \varphi) \cdot \nabla \varphi \\
    &=    -2 \intom |\Delta \varphi| \nabla \varphi \cdot \nabla \Delta \varphi \\
    &\le  2
            \left( \intom |\Delta \varphi|^3 \right)^\frac13
            \left( \intom |\nabla \varphi|^6 \right)^\frac16
            \left( \intom |\nabla \Delta \varphi|^2 \right)^\frac12 \\
    &\le  2 c_1^\frac16
            \|\varphi - \ol \varphi\|_{\leb \infty}^\frac12
            \left( \intom |\Delta \varphi|^3 \right)^\frac12
            \left( \intom |\nabla \Delta \varphi|^2 \right)^\frac12,
  \end{align*}
  hence
  \begin{align*}
          \intom |\Delta \varphi|^3
    &\le  c_2
            \|\varphi - \ol \varphi\|_{\leb \infty}
            \intom |\nabla \Delta \varphi|^2,
  \end{align*}
  where $c_2 \defs 4 c_1^\frac13$.
  Plugging this into \eqref{eq:gni_sob_32:first_sob16} yields
  \begin{align*} 
          \intom |\nabla \varphi|^6
    &\le  c_1 c_2
          \|\varphi - \ol \varphi\|_{\leb \infty}^4
          \intom |\nabla \Delta \varphi|^2.
  \end{align*}
  The statement follows by an approximation procedure and by setting $C \defs \max\{c_1, c_1 c_2\}$.
\end{proof}

\small \section*{Acknowledgments}
The author is partially supported by the German Academic Scholarship Foundation
and by the Deutsche Forschungsgemeinschaft within the project \emph{Emergence of structures and advantages in
cross-diffusion systems}, project number 411007140.
Moreover, the author is grateful for various suggestions from the anonymous referees which helped to improve the present article.

\hypersetup{urlcolor=blue}

\footnotesize


\begin{thebibliography}{10}
\setlength{\itemsep}{0.2pt}

\bibitem{AmannNonhomogeneousLinearQuasilinear1993}
\textsc{Amann, H.}:
\newblock {\em Nonhomogeneous linear and quasilinear elliptic and
  pa\-ra\-bo\-lic boundary value problems}.
\newblock In \textsc{Schmeisser, H.} and \textsc{Triebel, H.}, editors, {\em
  Function {{Spaces}}, {{Differential Operators}} and {{Nonlinear Analysis}}},
  \href{https://doi.org/10.1007/978-3-663-11336-2_1}{pages 9--126}.
  {Vieweg+Teubner Verlag}, {Wiesbaden}, 1993.
\newblock

\bibitem{BellomoEtAlMathematicalTheoryKeller2015}
\textsc{Bellomo, N.}, \textsc{Bellouquid, A.}, \textsc{Tao, Y.}, and
  \textsc{Winkler, M.}:
\newblock {\em Toward a mathematical theory of {{Keller}}\textendash{{Segel}}
  models of pattern formation in biological tissues}.
\newblock Math. Models Methods Appl. Sci.,
  \href{https://doi.org/10.1142/S021820251550044X}{25(09):1663--1763}, 2015.
\newblock

\bibitem{FriedmanPartialDifferentialEquations1976}
\textsc{Friedman, A.}:
\newblock {\em Partial differential equations}.
\newblock {R. E. Krieger Pub. Co}, {Huntington, N.Y}, 1976.

\bibitem{GarveyEtAlAssessingHowFish1994}
\textsc{Garvey, J.~E.}, \textsc{Stein, R.~A.}, and \textsc{Thomas, H.~M.}:
\newblock {\em Assessing how fish predation and interspecific prey competition
  influence a crayfish assemblage}.
\newblock Ecology, \href{https://doi.org/10.2307/1939556}{75(2):532--547},
  1994.
\newblock

\bibitem{GoudonUrrutiaAnalysisKineticMacroscopic2016}
\textsc{Goudon, T.} and \textsc{Urrutia, L.}:
\newblock {\em Analysis of kinetic and macroscopic models of pursuit-evasion
  dynamics}.
\newblock Commun. Math. Sci.,
  \href{https://doi.org/10.4310/CMS.2016.v14.n8.a7}{14(8):2253--2286}, 2016.
\newblock

\bibitem{HeZhengGlobalBoundednessSolutions2015}
\textsc{He, X.} and \textsc{Zheng, S.}:
\newblock {\em Global boundedness of solutions in a reaction\textendash
  diffusion system of predator\textendash prey model with prey-taxis}.
\newblock Appl. Math. Lett.,
  \href{https://doi.org/10.1016/j.aml.2015.04.017}{49:73--77}, 2015.
\newblock

\bibitem{HillLodgeReplacementResidentCrayfishes1999}
\textsc{Hill, A.~M.} and \textsc{Lodge, D.~M.}:
\newblock {\em Replacement of resident crayfishes by an exotic crayfish: the
  roles of competition and predation}.
\newblock Ecol. Appl.,
  \href{https://doi.org/10.1890/1051-0761(1999)009[0678:RORCBA]2.0.CO;2}{9(2):678--690},
  1999.
\newblock

\bibitem{HorstmannWinklerBoundednessVsBlowup2005}
\textsc{Horstmann, D.} and \textsc{Winkler, M.}:
\newblock {\em Boundedness vs. blow-up in a chemotaxis system}.
\newblock J. Differ. Equ.,
  \href{https://doi.org/10.1016/j.jde.2004.10.022}{215(1):52--107}, 2005.
\newblock

\bibitem{JinWangGlobalStabilityPreytaxis2017}
\textsc{Jin, H.-Y.} and \textsc{Wang, Z.-A.}:
\newblock {\em Global stability of prey-taxis systems}.
\newblock J. Differ. Equ.,
  \href{https://doi.org/10.1016/j.jde.2016.10.010}{262(3):1257--1290}, 2017.
\newblock

\bibitem{KareivaOdellSwarmsPredatorsExhibit1987}
\textsc{Kareiva, P.} and \textsc{Odell, G.}:
\newblock {\em Swarms of predators exhibit ``preytaxis'' if individual
  predators use area-restricted search}.
\newblock Am. Nat., 130(2):233--270, 1987.

\bibitem{LankeitLocallyBoundedGlobal2017}
\textsc{Lankeit, J.}:
\newblock {\em Locally bounded global solutions to a chemotaxis consumption
  model with singular sensitivity and nonlinear diffusion}.
\newblock J. Differ. Equ.,
  \href{https://doi.org/10.1016/j.jde.2016.12.007}{262(7):4052--4084}, 2017.
\newblock

\bibitem{LeeEtAlContinuousTravelingWaves2008}
\textsc{Lee, J.~M.}, \textsc{Hillen, T.}, and \textsc{Lewis, M.~A.}:
\newblock {\em Continuous traveling waves for prey-taxis}.
\newblock Bull. Math. Biol.,
  \href{https://doi.org/10.1007/s11538-007-9271-4}{70(3):654--676}, 2008.
\newblock

\bibitem{LeeEtAlPatternFormationPreytaxis2009}
\textsc{Lee, J.~M.}, \textsc{Hillen, T.}, and \textsc{Lewis, M.~A.}:
\newblock {\em Pattern formation in prey-taxis systems}.
\newblock J. Biol. Dyn.,
  \href{https://doi.org/10.1080/17513750802716112}{3(6):551--573}, 2009.
\newblock

\bibitem{LeoniFirstCourseSobolev2009}
\textsc{Leoni, G.}:
\newblock {\em A first course in {{Sobolev}} spaces}, volume 105 of {\em
  Graduate {{Studies}} in {{Mathematics}}}.
\newblock {American Mathematical Society}, {Providence, Rhode Island}, 2009.
\newblock

\bibitem{NirenbergEllipticPartialDifferential1959}
\textsc{Nirenberg, L.}:
\newblock {\em On elliptic partial differential equations}.
\newblock Ann. Della Scuola Norm. Super. Pisa - Cl. Sci., Ser. 3,
  13(2):115--162, 1959.

\bibitem{TaoGlobalExistenceClassical2010}
\textsc{Tao, Y.}:
\newblock {\em Global existence of classical solutions to a predator\textendash
  prey model with nonlinear prey-taxis}.
\newblock Nonlinear Anal. Real World Appl.,
  \href{https://doi.org/10.1016/j.nonrwa.2009.05.005}{11(3):2056--2064}, 2010.
\newblock

\bibitem{TaoWinklerExistenceTheoryQualitative2020}
\textsc{Tao, Y.} and \textsc{Winkler, M.}:
\newblock {\em Existence theory and qualitative analysis of a fully
  cross-diffusive predator-prey system}.
\newblock Preprint, \href{https://arxiv.org/abs/2004.00529}{ arXiv:2004.00529},
  2020.

\bibitem{TaoWinklerFullyCrossdiffusiveTwocomponent2020}
\textsc{Tao, Y.} and \textsc{Winkler, M.}:
\newblock {\em A fully cross-diffusive two-component system: {{Existence}} and
  qualitative analysis via entropy-consistent thin-film-type approximation}.
\newblock Preprint, 2020.

\bibitem{TsyganovEtAlQuasisolitonInteractionPursuitEvasion2003}
\textsc{Tsyganov, M.~A.}, \textsc{Brindley, J.}, \textsc{Holden, A.~V.}, and
  \textsc{Biktashev, V.~N.}:
\newblock {\em Quasisoliton interaction of pursuit-evasion waves in a
  predator-prey system}.
\newblock Phys. Rev. Lett.,
  \href{https://doi.org/10.1103/PhysRevLett.91.218102}{91(21)}, 2003.
\newblock

\bibitem{TyutyunovEtAlMinimalModelPursuitEvasion2007}
\textsc{Tyutyunov, Y.}, \textsc{Titova, L.}, and \textsc{Arditi, R.}:
\newblock {\em A minimal model of pursuit-evasion in a predator-prey system}.
\newblock Math. Model. Nat. Phenom.,
  \href{https://doi.org/10.1051/mmnp:2008028}{2(4):122--134}, 2007.
\newblock

\bibitem{WinklerAsymptoticHomogenizationThreedimensional2017}
\textsc{Winkler, M.}:
\newblock {\em Asymptotic homogenization in a three-dimensional nutrient taxis
  system involving food-supported proliferation}.
\newblock J. Differ. Equ.,
  \href{https://doi.org/10.1016/j.jde.2017.06.002}{263(8):4826--4869}, 2017.
\newblock

\bibitem{WuEtAlGlobalExistenceSolutions2016}
\textsc{Wu, S.}, \textsc{Shi, J.}, and \textsc{Wu, B.}:
\newblock {\em Global existence of solutions and uniform persistence of a
  diffusive predator\textendash prey model with prey-taxis}.
\newblock J. Differ. Equ.,
  \href{https://doi.org/10.1016/j.jde.2015.12.024}{260(7):5847--5874}, 2016.
\newblock

\bibitem{WuEtAlDynamicsPatternFormation2018}
\textsc{Wu, S.}, \textsc{Wang, J.}, and \textsc{Shi, J.}:
\newblock {\em Dynamics and pattern formation of a diffusive
  predator\textendash prey model with predator-taxis}.
\newblock Math. Models Methods Appl. Sci.,
  \href{https://doi.org/10.1142/S0218202518400158}{28(11):2275--2312}, 2018.
\newblock

\bibitem{XiangGlobalDynamicsDiffusive2018}
\textsc{Xiang, T.}:
\newblock {\em Global dynamics for a diffusive predator\textendash prey model
  with prey-taxis and classical {{Lotka}}\textendash{{Volterra}} kinetics}.
\newblock Nonlinear Anal. Real World Appl.,
  \href{https://doi.org/10.1016/j.nonrwa.2017.07.001}{39:278--299}, 2018.
\newblock

\end{thebibliography}
\end{document}